\newcommand{\vertiii}[1]{{\left\vert\kern-0.25ex\left\vert\kern-0.25ex\left\vert #1
		\right\vert\kern-0.25ex\right\vert\kern-0.25ex\right\vert}}
\theoremstyle{plain}
\newtheorem{Thm}{Theorem}[section]
\newtheorem{Prop}[Thm]{Proposition}
\newtheorem{Lem}[Thm]{Lemma}
\newtheorem{Cor}[Thm]{Corollary}
\newtheorem{Qn}[Thm]{Question}
\newtheorem{Main results}[Thm]{Main results}
\newtheorem*{W-W}{Wiener-Wintner pointwise ergodic theorem}
\theoremstyle{definition}
\newtheorem{Def}[Thm]{Definition}
\newtheorem{Not}[Thm]{Notation}
\newtheorem{Rmk}[Thm]{Remark}
\renewcommand{\epsilon}{\varepsilon}
\title{Noncommutative Ergodic Optimization}
\author{Aidan Young$^1$}
\address{University of North Carolina at Chapel Hill}
\email{$^1$\url{aidanjy@live.unc.edu}}
\begin{document}
	
	\maketitle
	
\begin{abstract}
We extend the theory of ergodic optimization and maximizing measures to the non-commutative field of C*-dynamical systems. We then provide a result linking the ergodic optimizations of elements of a C*-dynamical system to the convergence of certain ergodic averages in a suitable seminorm. We also provide alternate proofs of several results in this article using the tools of nonstandard analysis.
\end{abstract}

One of the guiding questions of the field of ergodic optimization is the following: Given a topological dynamical system $(X, G, U)$, and a real-valued continuous function $f \in C(X)$, what values can $\int f \mathrm{d} \mu$ take when $\mu$ is an invariant Borel probability measure on $X$, and in particular, what are the extreme values it can take? In a joint work with I. Assani \cite[Section 3]{Assani-Young}, and later in \cite{PointwiseReductionHeuristic}, we noticed that the field of ergodic optimization was relevant to the study of certain temporo-spatial differentiation problems. Hoping to extend these tools to the study of temporo-spatial differentiation problems in the setting of operator-algebraic dynamical systems, this article develops an operator-algebraic formalization of this question of ergodic optimization, re-interpreting it as a question about the values of invariant states on a C*-dynamical system.

Section \ref{Ergodic optimization} develops the theory of ergodic optimization in the context of C*-dynamical systems, where the role of ``maximizing measures" is instead played by invariant states on a C*-algebra. The framework we adopt is in fact somewhat more general than the classical framework of maximizing measures, since we consider ergodic optimizations relative to a restricted class of invariant states, which we call relative ergodic optimizations. We also demonstrate that some of the basic results of that classical theory of ergodic optimization extend to the C*-dynamical setting.

In Section \ref{Singly generated UE}, we define a value called the \emph{gauge} of a singly generated C*-dynamical system, a non-commutative generalization of the functional of the same name defined in \cite{Assani-Young}, and describe its connections to questions of ergodic optimization, as well as the ways in which it can be used to ``detect" the unique ergodicity of C*-dynamical systems under certain Choquet-theoretic assumptions.

In Section \ref{Amenable UE}, we extend the results of the previous section to the case where the phase group is a countable discrete amenable group. We also provide a characterization of uniquely ergodic C*-dynamical systems of countable discrete amenable groups in terms of various notions of convergence of ergodic averages.

In Section \ref{NC Herman section}, we extend some fundamental identities of ergodic optimization to the noncommutative and relative setting. We also relate the convergence properties of certain ergodic averages to relative ergodic optimizations.

Finally, in Section \ref{NSA}, we provide alternate proofs of several results from this article using the toolbox of nonstandard analysis.

\section{Ergodic Optimization in C*-Dynamical Systems}\label{Ergodic optimization}

Given a unital C*-algebra $\mathfrak{A}$, let $\operatorname{Aut}(\mathfrak{A})$ denote the family of all *-automorphisms of $\mathfrak{A}$. We endow $\operatorname{Aut}(\mathfrak{A})$ with the \emph{point-norm topology}, i.e. the topology induced by the pseudometrics
\begin{align*}
	(\Phi, \Psi)	& \mapsto \left\| \Phi(a) - \Psi(a) \right\|	& (a \in \mathfrak{A}) .
\end{align*}
This topology makes $\operatorname{Aut}(\mathfrak{A})$ a topological group \cite[II.5.5.4]{Blackadar}.

We define a \emph{C*-dynamical system} to be a triple $(\mathfrak{A}, G, \Theta)$ consisting of a unital C*-algebra $\mathfrak{A}$, a topological group $G$ (called the \emph{phase group}), and a point-continuous left group action $\Theta : G \to \operatorname{Aut}(\mathfrak{A})$.

\begin{Not}
Let $(\mathfrak{A}, G, \Theta)$ be a C*-dynamical system, and let $F \subseteq G$ be a nonempty finite subset. We define $\operatorname{Avg}_F : \mathfrak{A} \to \mathfrak{A}$ by
$$\operatorname{Avg}_F x : = \frac{1}{|F|} \sum_{g \in F} \Theta_g a .$$
\end{Not}

Denote by $\mathcal{S}$ the family of all states on $\mathfrak{A}$ endowed with the weak*-topology, and by $\mathcal{T}$ the subfamily of all tracial states on $\mathfrak{A}$. A state $\phi$ on $\mathfrak{A}$ is called \emph{$\Theta$-invariant} (or simply \emph{invariant} if the action $\Theta$ is understood in context) if $\phi = \phi \circ \Theta_g$ for all $g \in G$. Denote by $\mathcal{S}^G \subseteq \mathcal{S}$ the family of all $\Theta$-invariant states on $\mathfrak{A}$, and by $\mathcal{T}^G \subseteq \mathcal{T}$ the family of all $\Theta$-invariant tracial states on $\mathfrak{A}$. The set $\mathcal{S}^G$ (resp. $\mathcal{T}^G$) is weak*-compact in $\mathcal{S}$ (resp. in $\mathcal{T}$). Unless otherwise stated, whenever we deal with subspaces of $\mathcal{S}$, we consider these subspaces equipped with the weak*-topology.

We will assume for the remainder of this section that $(\mathfrak{A}, G, \Theta)$ is a C*-dynamical system such that $\mathfrak{A}$ is separable, and also that $\mathcal{S}^G \neq \emptyset$. This framework will include every system of the form $(C(Y), G, \Theta)$, where $Y$ is a compact metrizable topological space, the group $G$ is countable, discrete, and amenable, and $\Theta$ is of the form $\Theta_g : f \mapsto f \circ U_g$ for all $g \in G$, where $U : G \curvearrowright Y$ is a \emph{right} action of $G$ on $Y$ by homeomorphisms. Because of the correspondence between topological dynamical systems as we've defined them previously in Section \ref{Topological stuff} and C*-dynamical systems over commutative C*-algebras, it is customary to call a C*-dynamical system a ``non-commutative topological dynamical systems."

Before proceeding, we prove the following Krylov–Bogolyubov-type result, which will be useful to establish the $\Theta$-invariance of certain states later.

\begin{Lem}\label{K-B}
Let $(\mathfrak{A}, G, \Theta)$ be a C*-dynamical system, and let $G$ be an amenable group. If $(\phi_k)_{k = 1}^\infty$ is a sequence in $\mathcal{S}$, and $\mathbf{F} = (F_k)_{k = 1}^\infty$ is a right Følner sequence for $G$, then any weak*-limit point of the sequence $\left( \phi_k \circ \operatorname{Avg}_{F_k} \right)_{k = 1}^\infty$ is $\Theta$-invariant. In particular, if $K$ is a nonempty, $\Theta$-invariant, weak*-compact, convex subset of $\mathcal{S}$, then $K \cap \mathcal{S}^G \neq \emptyset$.
\end{Lem}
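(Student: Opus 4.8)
The plan is to prove the first (main) assertion by a \Folner\ estimate showing that the averaging maps $\operatorname{Avg}_{F_k}$ are asymptotically $\Theta$-invariant, and then to deduce the ``in particular'' clause by applying the first part to a suitable constant sequence. As a preliminary observation, I would note that each $\psi_k := \phi_k \circ \operatorname{Avg}_{F_k}$ is again a state: every $\Theta_g$ is a unital $*$-automorphism, hence unital and positive, so the convex average $\operatorname{Avg}_{F_k}$ is a unital positive map, and precomposing a state with it yields a state. Thus the $\psi_k$ all lie in the weak$^*$-compact set $\mathcal{S}$, and any weak$^*$-limit point $\psi$ is automatically a state; it remains only to check its $\Theta$-invariance.

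The heart of the argument is the estimate. Fix $g \in G$ and $a \in \mathfrak{A}$. Since $\Theta$ is a left action, $\operatorname{Avg}_{F_k}(\Theta_g a) = \frac{1}{|F_k|}\sum_{h \in F_k}\Theta_{hg} a$, and reindexing the summation over the translate $F_k g$, then cancelling the terms indexed by $F_k \cap F_k g$ and using that each $\Theta_h$ is isometric, gives
$$\left\| \operatorname{Avg}_{F_k}(\Theta_g a) - \operatorname{Avg}_{F_k}(a) \right\| \le \frac{|F_k g \triangle F_k|}{|F_k|}\,\|a\| .$$
Because $\mathbf{F}$ is a \emph{right} \Folner\ sequence, the right-hand side tends to $0$. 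Applying the norm-one functional $\phi_k$ then yields $\left| \psi_k(\Theta_g a) - \psi_k(a) \right| \to 0$ as $k \to \infty$, for each fixed $g$ and $a$.

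To pass to the limit point $\psi$ I would argue directly: given $\epsilon > 0$, there are arbitrarily large $k$ with $\left| \psi_k(\Theta_g a) - \psi(\Theta_g a) \right| < \epsilon$ and $\left| \psi_k(a) - \psi(a) \right| < \epsilon$; choosing such a $k$ large enough that the \Folner\ bound above is below $\epsilon$ forces $\left| \psi(\Theta_g a) - \psi(a) \right| < 3\epsilon$. As $\epsilon$, $a$, and $g$ are arbitrary, $\psi = \psi \circ \Theta_g$ for all $g$, i.e. $\psi \in \mathcal{S}^G$. For the second assertion, take any $\phi \in K$ (possible since $K \neq \emptyset$) and apply the first part to the constant sequence $\phi_k = \phi$. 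Each $\psi_k = \frac{1}{|F_k|}\sum_{g \in F_k}\phi \circ \Theta_g$ is a convex combination of the states $\phi \circ \Theta_g$, which lie in $K$ by its $\Theta$-invariance, so $\psi_k \in K$ by convexity; weak$^*$-compactness of $K$ then furnishes a limit point $\psi \in K$, which the first part shows to be $\Theta$-invariant, giving $\psi \in K \cap \mathcal{S}^G$.

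The main obstacle is the \Folner\ estimate, and specifically getting the variance of conventions right: since $\Theta$ is a \emph{left} action, the relevant translate appearing in the reindexing is $F_k g$ (acting on the right), which is exactly the quantity a \emph{right} \Folner\ sequence controls. Matching these correctly is the crux; once it is in place, the remainder reduces to the triangle inequality and routine weak$^*$-limit-point bookkeeping.
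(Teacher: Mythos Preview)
Your proposal is correct and follows essentially the same approach as the paper's proof: both establish the key \Folner\ estimate $\left|\psi_k(\Theta_g a)-\psi_k(a)\right|\le \frac{|F_k g\,\triangle\,F_k|}{|F_k|}\|a\|\to 0$ via the reindexing $h\mapsto hg$, and both deduce the ``in particular'' clause by applying the first part to the constant sequence $\phi_k=\phi\in K$. The only cosmetic differences are that you bound the norm $\|\operatorname{Avg}_{F_k}(\Theta_g a)-\operatorname{Avg}_{F_k}(a)\|$ first and then apply $\phi_k$, whereas the paper computes directly at the level of $\phi_k$, and your $3\epsilon$ limit-point argument is spelled out where the paper simply takes a $\limsup$ along the convergent subsequence.
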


\begin{proof}
Let $(\phi_k)_{k = 1}^\infty$ be a sequence of states, and fix $g_0 \in G, x \in \mathfrak{A}$. Then
\begin{align*}
	& \left| \phi_k \left( \operatorname{Avg}_{F_k} \Theta_{g_0} x \right) - \phi_k \left( \operatorname{Avg}_{F_k} x \right) \right| \\
	=	& \left| \frac{1}{|F_k|} \left[ \phi_k \left( \sum_{g \in F_k g_0} \Theta_g x \right) - \phi_k \left( \sum_{g \in F_k} \Theta_g x \right) \right] \right| \\
	\leq	& \left| \frac{1}{|F_k|} \phi_k \left( \sum_{g \in F_k g_0 \setminus F_k} \Theta_g x \right) \right| + \left| \frac{1}{|F_k|} \phi_k \left( \sum_{g \in F_k \setminus F_k g_0} \Theta_g x \right) \right| \\
	\leq	& \frac{|F_k g_0 \Delta F_k|}{|F_k|} \|x\| \\
	\stackrel{k \to \infty}{\to}	& 0 .
\end{align*}
Therefore, if $k_1 < k_2 < \cdots$ is such that $\psi = \lim_{\ell \to \infty} \phi_{k_\ell} \circ \operatorname{Avg}_{F_{k_\ell}}$ exists, then
$$\left| \psi(\Theta_g x) - \psi(x) \right|	\leq \limsup_{\ell \to \infty} \frac{ \left|F_{k_\ell} g_0 \Delta F_{k_\ell} \right|}{\left|F_{k_\ell} \right|} \|x\| = 0 .$$

Finally, let $K$ be a nonempty, $\Theta$-invariant, weak*-compact, convex subset of $\mathcal{S}$. Let $\phi$ be any state in $K$, and consider the sequence $\left( \phi \circ \operatorname{Avg}_{F_k} \right)_{k = 1}^\infty$. By the convexity and $\Theta$-invariance of $K$, every term of this sequence is an element of $K$, and since $K$ is compact, there exists a subsequence of this sequence which converges in $K$. As has already been shown, that limit must be an element of $\mathcal{S}^G$.
\end{proof}

\begin{Rmk}
Lemma \ref{Nonstandard K-B} can be seen as a nonstandard-analytic analogue to Lemma \ref{K-B}.
\end{Rmk}

Although our manner of proof of Lemma \ref{K-B} is scarcely novel, the result as we have stated it here can be used to ensure the existence of invariant states with specific properties that might interest us, as seen for example in Corollary \ref{Tracial K-B} and Proposition \ref{Nonempty annihilators}. Our standing hypothesis that $\mathfrak{A}$ be separable is not necessary for this proof of Lemma \ref{K-B}.

\begin{Cor}\label{Tracial K-B}
If $\mathcal{T} \neq \emptyset$, and $G$ is amenable, then $\mathcal{T}^G \neq \emptyset$.
\end{Cor}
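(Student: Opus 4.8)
The plan is to realize $\mathcal{T}^G$ as the intersection $\mathcal{T} \cap \mathcal{S}^G$ and to produce a point of this intersection by applying the ``in particular'' clause of Lemma \ref{K-B} to the set $K = \mathcal{T}$ of tracial states. This is exactly where the amenability of $G$ enters, since that hypothesis is what makes Lemma \ref{K-B} available. Concretely, I must verify that $\mathcal{T}$ satisfies the four hypotheses demanded of $K$: nonemptiness, convexity, weak*-compactness, and $\Theta$-invariance.

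Three of these are routine. Nonemptiness is precisely the standing hypothesis $\mathcal{T} \neq \emptyset$. Convexity is immediate, since a convex combination $t\tau_1 + (1-t)\tau_2$ of tracial states still satisfies the trace identity. For weak*-compactness, I would observe that for each fixed pair $a, b \in \mathfrak{A}$ the condition $\phi(ab) = \phi(ba)$ is a weak*-closed condition on $\phi \in \mathcal{S}$; intersecting over all pairs $a, b$ exhibits $\mathcal{T}$ as a weak*-closed subset of the weak*-compact state space $\mathcal{S}$, hence itself weak*-compact.

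The one point carrying any content is $\Theta$-invariance, i.e. that $\tau \in \mathcal{T}$ and $g \in G$ force $\tau \circ \Theta_g \in \mathcal{T}$. Here I would use that each $\Theta_g$ is a unital $*$-automorphism: it is positive and preserves the unit, so $\tau \circ \Theta_g$ is again a state, while the computation
\[
(\tau \circ \Theta_g)(ab) = \tau\big(\Theta_g(a)\,\Theta_g(b)\big) = \tau\big(\Theta_g(b)\,\Theta_g(a)\big) = (\tau \circ \Theta_g)(ba),
\]
in which the middle equality is the trace property of $\tau$, shows it is tracial.

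With all four properties in hand, Lemma \ref{K-B} yields $\mathcal{T} \cap \mathcal{S}^G \neq \emptyset$, and since a state lies in $\mathcal{T} \cap \mathcal{S}^G$ exactly when it is both tracial and $\Theta$-invariant, this intersection is precisely $\mathcal{T}^G$; the corollary follows. I do not expect a genuine obstacle: the substance is wholly contained in Lemma \ref{K-B}, and the only item requiring mild attention is confirming that the trace identity is preserved under the automorphisms $\Theta_g$, as recorded in the display above.
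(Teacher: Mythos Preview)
Your proposal is correct and follows exactly the paper's approach: the paper's proof is the single line ``Apply Lemma \ref{K-B} to the case where $K = \mathcal{T}$,'' and you have simply spelled out the verification of the four hypotheses (nonemptiness, convexity, weak*-compactness, $\Theta$-invariance) that the paper leaves implicit.
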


\begin{proof}
Apply Lemma \ref{K-B} to the case where $K = \mathcal{T}$.
\end{proof}

\begin{Def}
We denote by $\mathfrak{R}$ the real Banach space of all self-adjoint elements of $\mathfrak{A}$, and denote by $\mathfrak{R}^\natural$ the space of all real self-adjoint bounded linear functionals on $\mathfrak{A}$.
\end{Def}

\begin{Def}\label{Simplex definition}
	Let $V$ be a locally convex topological real vector space, and let $K$ be a compact subset of $V$ which is contained in a hyperplane that does not contain the origin. We call $K$ a \emph{simplex} if the positive cone $P = \left\{ c k : c \in \mathbb{R}_{\geq 0}, k \in K \right\}$ defines a lattice ordering on $P - P = \{ p_1 - p_2 : p_1, p_2 \in P \} \subseteq V$ with respect to the partial order $a \leq b \iff b - a \in P$.
\end{Def}

\begin{Rmk}
	In Definition \ref{Simplex definition}, the assumption that $K$ lives in a hyperplane that does not contain the origin is technically superfluous, but simplifies the theory somewhat (see \cite[Section 10]{Phelps}), and is satisfied by all the simplices that interest us here. Specifically, we know that $\mathcal{S}$ (and by extension $\mathcal{S}^G, \mathcal{T}, \mathcal{T}^G$) lives in the real hyperplane $\left\{ \phi \in \mathfrak{R}^\natural : \phi(1) = 1 \right\}$ defined by the evaluation at $1$.
\end{Rmk}

We begin with the following lemma.

\begin{Lem}\label{Simplices}
	\begin{enumerate}[label=(\roman*)]
		\item The spaces $\mathcal{S}, \mathcal{S}^G, \mathcal{T}, \mathcal{T}^G$ are compact and metrizable.
		\item If $\mathcal{T} \neq \emptyset$, then the space $\mathcal{T}^G$ is a simplex.
	\end{enumerate}
\end{Lem}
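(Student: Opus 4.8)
The plan is to handle the two parts separately: part (i) is a routine compactness-and-separability argument, while part (ii) rests on a classical structural theorem together with a short fixed-point argument.

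For part (i), I would first observe that $\mathcal{S}$ is weak*-closed in the closed unit ball of $\mathfrak{A}^*$, since the conditions $\phi(1)=1$ and positivity both pass to weak*-limits; by Banach--Alaoglu it is therefore weak*-compact. Separability of $\mathfrak{A}$ makes the weak*-topology on the unit ball of $\mathfrak{A}^*$ metrizable, so $\mathcal{S}$ is metrizable. The remaining spaces are weak*-closed subsets of $\mathcal{S}$: traciality ($\phi(ab)=\phi(ba)$ for all $a,b$) and invariance ($\phi\circ\Theta_g=\phi$ for all $g$) are each cut out by families of weak*-continuous conditions, so $\mathcal{T}$, $\mathcal{S}^G$, and $\mathcal{T}^G=\mathcal{T}\cap\mathcal{S}^G$ are compact and, as subspaces of a metrizable space, metrizable.

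For part (ii), the essential input is the classical fact that the tracial state space of a unital C*-algebra, when nonempty, is a Choquet simplex (a classical result; see, e.g., \cite{Blackadar}); equivalently, the cone $P=\mathbb{R}_{\geq 0}\mathcal{T}$ of positive tracial functionals generates a vector lattice $P-P$ inside $\mathfrak{R}^\natural$. Granting this, I would reduce the claim to showing that the $G$-fixed points form a subsimplex. The key point is that for each $g\in G$ the pullback $\phi\mapsto\phi\circ\Theta_g$ is a linear bijection of $\mathfrak{R}^\natural$ carrying $P$ onto itself (because $\Theta_g$ is a $*$-automorphism, it preserves positivity and traciality and has an inverse of the same form); hence $G$ acts on $P-P$ by order-automorphisms, and an order-automorphism of a lattice-ordered vector space automatically preserves the operations $\vee$ and $\wedge$.

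The remainder is formal. I would check that the fixed cone $P^G$ is exactly the cone over $\mathcal{T}^G$, and that $(P-P)^G=P^G-P^G$: writing a fixed element as $x=x^+-x^-$ with $x^\pm=(\pm x)\vee 0\in P$, each $x^\pm$ is again fixed since the action preserves $\vee$, so $x$ is a difference of fixed positive traces. The same preservation of $\vee$ and $\wedge$ shows $P^G-P^G$ is a sublattice of $P-P$, hence lattice-ordered in its own right, which is precisely the statement that $\mathcal{T}^G$ is a simplex (the degenerate case $\mathcal{T}^G=\emptyset$ being trivial). I expect the real obstacle to lie not in this bookkeeping but in the input theorem that $\mathcal{T}$ is a simplex; should a self-contained proof be preferred over a citation, one realizes positive traces as normal traces on the enveloping von Neumann algebra $\mathfrak{A}^{**}$ and reduces the lattice property to the abelian situation via the center, where it becomes the familiar lattice structure of an $L^1$-space.
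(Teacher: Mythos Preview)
Your proposal is correct, and for part (i) it is identical to the paper's argument. For part (ii) the overall architecture is also the same---cite that $\mathcal{T}$ is a simplex, then show that the $G$-fixed part of the associated lattice is itself a sublattice spanned by the fixed cone---but the mechanism you use to establish equivariance of the decomposition is different and somewhat cleaner.

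The paper argues directly with the Jordan decomposition: it first proves a separate lemma (Lemma~\ref{Tracial Jordan}) that the Jordan parts $\phi^\pm$ of a tracial self-adjoint functional are themselves tracial, and then, using the $\epsilon$--$z$ characterization of orthogonality, checks by hand that $\phi^+\circ\Theta_g \perp \phi^-\circ\Theta_g$, so uniqueness of the Jordan decomposition forces $\phi^\pm\circ\Theta_g=\phi^\pm$. Your route bypasses both of these steps: once $\mathcal{T}$ is known to be a simplex, the lattice operations on $P-P$ are already available \emph{inside} the tracial world, and the single observation that each $\phi\mapsto\phi\circ\Theta_g$ is a cone-preserving linear bijection (hence an order-automorphism, hence preserves $\vee$ and $\wedge$) immediately gives $(\phi\circ\Theta_g)^\pm=\phi^\pm\circ\Theta_g$ and that the fixed space is a sublattice. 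What the paper's approach buys is explicitness---it never invokes the abstract fact that order-automorphisms commute with lattice operations---while yours buys economy: you do not need the auxiliary Lemma~\ref{Tracial Jordan} at all, and the invariance of the decomposition becomes a one-line consequence of general order theory rather than a computation.
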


Before proving this lemma, we need to introduce some terminology. Let $\phi, \psi$ be two positive linear functionals on a unital C*-algebra $\mathfrak{A}$. We say that the two positive functionals are \emph{orthogonal}, notated $\phi \perp \psi$, if they satisfy either of the following two equivalent conditions:
\begin{enumerate}[label=(\alph*)]
	\item $\| \phi + \psi \| = \| \phi \| + \| \psi \|$.
	\item For every $\epsilon > 0$ exists positive $z \in \mathfrak{A}$ of norm $\leq 1$ such that $\phi(1 - z) < \epsilon, \psi(z) < \epsilon$.
\end{enumerate}
It is well-know that these conditions are equivalent \cite[Lemma 3.2.3]{Pedersen}. For every $\phi \in \mathfrak{R}^\natural$, there exist unique positive linear functionals $\phi^+, \phi^-$ such that $\phi = \phi^+ - \phi^-$, and $\phi^+ \perp \phi^-$, called the \emph{Jordan decomposition} of $\phi$ \cite[II.6.3.4]{Blackadar}.

Before proving Lemma \ref{Simplices}, we demonstrate the following property of the Jordan decomposition of a tracial functional.

\begin{Lem}\label{Tracial Jordan}
	Let $\mathfrak{A}$ be a unital C*-algebra, and $\phi \in \mathfrak{R}^\natural$. Suppose that $\phi(x y) = \phi(y x)$ for all $x, y \in \mathfrak{A}$. Then $\phi^\pm(xy) = \phi^\pm(yx)$ for all $x, y \in \mathfrak{A}$.
\end{Lem}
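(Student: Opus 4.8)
The plan is to exploit the \emph{uniqueness} of the Jordan decomposition together with the observation that the tracial hypothesis is exactly invariance under inner automorphisms. First I would record that equivalence: for any unitary $u \in \mathfrak{A}$, write $\operatorname{Ad}_u(x) = u^* x u$, which is a *-automorphism of $\mathfrak{A}$. The hypothesis $\phi(xy) = \phi(yx)$ gives $\phi(\operatorname{Ad}_u x) = \phi(u^* x u) = \phi(x u^* u) = \phi(x)$, so $\phi \circ \operatorname{Ad}_u = \phi$ for every unitary $u$.

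Next I would check that composition with $\operatorname{Ad}_u$ respects all the structure used to define the Jordan decomposition. Since $\operatorname{Ad}_u$ is a unital *-automorphism, $\psi \mapsto \psi \circ \operatorname{Ad}_u$ sends positive functionals to positive functionals and is norm-preserving, because for positive $\psi$ we have $\|\psi \circ \operatorname{Ad}_u\| = (\psi \circ \operatorname{Ad}_u)(1) = \psi(1) = \|\psi\|$. It also preserves orthogonality: using condition (a), if $\phi^+ \perp \phi^-$ then
$$\left\| (\phi^+ + \phi^-)\circ \operatorname{Ad}_u \right\| = \left\| \phi^+ + \phi^- \right\| = \|\phi^+\| + \|\phi^-\| = \left\| \phi^+ \circ \operatorname{Ad}_u \right\| + \left\| \phi^- \circ \operatorname{Ad}_u \right\|,$$
so $\phi^+ \circ \operatorname{Ad}_u \perp \phi^- \circ \operatorname{Ad}_u$. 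Consequently $\phi \circ \operatorname{Ad}_u = (\phi^+ \circ \operatorname{Ad}_u) - (\phi^- \circ \operatorname{Ad}_u)$ is a decomposition of $\phi \circ \operatorname{Ad}_u = \phi$ into a difference of orthogonal positive functionals, and by the uniqueness of the Jordan decomposition I conclude $\phi^+ \circ \operatorname{Ad}_u = \phi^+$ and $\phi^- \circ \operatorname{Ad}_u = \phi^-$ for every unitary $u$.

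Finally I would translate this invariance back into the trace identity. From $\phi^\pm(u^* x u) = \phi^\pm(x)$, the substitution $x \mapsto u y$ yields $\phi^\pm(y u) = \phi^\pm(u y)$ for every unitary $u$ and every $y \in \mathfrak{A}$. Fixing $y$, the functional $x \mapsto \phi^\pm(yx) - \phi^\pm(xy)$ is linear in $x$ and vanishes on every unitary; since $\mathfrak{A}$ is unital, every element is a finite linear combination of unitaries, so this functional vanishes identically, giving $\phi^\pm(yx) = \phi^\pm(xy)$ for all $x, y \in \mathfrak{A}$.

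I do not anticipate a serious obstacle here: the only points requiring care are verifying that composition with a *-automorphism preserves orthogonality (handled above via condition (a)) and invoking uniqueness of the Jordan decomposition; the passage from inner-automorphism invariance to traciality is then routine, resting only on the fact that unitaries span a unital C*-algebra.
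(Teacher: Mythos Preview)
Your proof is correct and follows essentially the same route as the paper's: reformulate traciality as invariance under all inner automorphisms $\operatorname{Ad}_u$, observe that composing with $\operatorname{Ad}_u$ sends the Jordan decomposition of $\phi$ to another orthogonal decomposition of $\phi\circ\operatorname{Ad}_u=\phi$ into positive parts, invoke uniqueness to get $\phi^\pm\circ\operatorname{Ad}_u=\phi^\pm$, and then use that unitaries span a unital C*-algebra to recover the trace identity for $\phi^\pm$. The only cosmetic differences are your convention $\operatorname{Ad}_u(x)=u^*xu$ versus the paper's $uxu^*$, and a small slip where you write $\phi(xu^*u)$ (the cyclic shift of $\phi(u^*xu)$ is $\phi(xuu^*)$ or $\phi(uu^*x)$, though of course all three equal $\phi(x)$).
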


\begin{proof}
	Let $\mathcal{U}(\mathfrak{A})$ denote the group of unitary elements in $\mathfrak{A}$. For a unitary element $u \in \mathcal{U}(\mathfrak{A})$, let $\operatorname{Ad}_u \in \operatorname{Aut}(\mathfrak{A})$ denote the inner automorphism
	$$\operatorname{Ad}_u x = u x u^* .$$
	Let $\psi \in \mathfrak{A}'$. We claim that $\psi$ is tracial if and only if $\psi \circ \operatorname{Ad}_u = \psi$ for all unitaries $u \in \mathcal{U}(\mathfrak{A})$.
	
	Let $u \in \mathcal{U}(\mathfrak{A})$ be unitary, and $x \in \mathfrak{A}$ an arbitrary element. Then
	\begin{align*}
		\phi(u x)	& = \psi \left(u (x u) u^* \right) \\
		& = \psi(\operatorname{Ad}_u (xu) ) .
	\end{align*}
	So $\psi(ux) = \psi(xu)$ if and only if $\psi(\operatorname{Ad}_u(xu)) = \psi(xu)$.
	
	In one direction, suppose that $\psi = \psi \circ \operatorname{Ad}_u$ for all $u \in \mathcal{U}(\mathfrak{A}).$ Fix $x, y \in \mathfrak{A}$. Then we can write $y = \sum_{j = 1}^4 c_j u_j$ for some $c_1, \ldots, c_4 \in \mathbb{C}$ and unitaries $u_1, \ldots, u_4 \in \mathcal{U} (\mathfrak{A})$ unitary. Then
	\begin{align*}
		\psi(xy)	& = \psi \left( x \sum_{j = 1}^4 c_j u_j \right) \\
		& = \sum_{j = 1}^4 c_j \psi(x u_j) \\
		& = \sum_{j = 1}^4 c_j \psi(\operatorname{Ad}_{u_j} (x u_j)) \\
		& = \sum_{j = 1}^4 c_j \psi(u_j x) \\
		& = \psi \left( \left( \sum_{j = 1}^4 c_j u_j \right) x \right) \\
		& = \psi(y x).
	\end{align*}
	Thus $\psi$ is tracial.
	
	In the other direction, suppose there exists $u \in \mathcal{U}(\mathfrak{A})$ such that $\psi \circ \operatorname{Ad}_u \neq \psi$. Let $y \in \mathfrak{A}$ such that $\psi(y) \neq \psi (\operatorname{Ad}_u y)$, and let $x = y u^*$. Then
	\begin{align*}
		\psi(x u)	& = \psi(y) \\
		& \neq \psi(\operatorname{Ad}_u y) \\
		& = \psi\left( u y u^* \right) \\
		& = \psi(u x) .
	\end{align*}
	Therefore $\psi$ is not tracial.
	
	Now, if $\phi \in \mathfrak{R}^\natural$ is tracial, then $\phi \circ \operatorname{Ad}_u = \phi$ for all $u \in \mathcal{U}(\mathfrak{A})$. Then $\phi = \phi \circ \operatorname{Ad}_u = \left(\phi^+ \circ \operatorname{Ad}_u \right) - \left(\phi^- \circ \operatorname{Ad}_u\right)$. But $\left\| \phi^\pm \circ \operatorname{Ad}_u \right\| = \left\| \phi^\pm \right\|$, so it follows that $\left\| \phi \right\| = \left\| \phi^+ \circ \operatorname{Ad}_u \right\| + \left\| \phi^- \circ \operatorname{Ad}_u \right\|$. Therefore $\phi = \left(\phi^+ \circ \operatorname{Ad}_u\right) - \left(\phi^- \circ \operatorname{Ad}_u\right)$ is an orthogonal decomposition of $\phi$, and so it is \emph{the} Jordan decomposition. This means that $\phi^\pm = \phi^\pm \circ \operatorname{Ad}_u$. Since this is true for all $u \in \mathcal{U}(\mathfrak{A})$, it follows that $\phi^\pm$ are tracial.
\end{proof}

\begin{proof}[Proof of Lemma \ref{Simplices}]
	\begin{enumerate}[label=(\roman*)]
		\item This all follows because $\mathcal{S}$ is a weak*-closed real subspace of the unit ball in the continuous dual of the separable Banach space $\mathfrak{R}$, and the spaces $\mathcal{S}^G, \mathcal{T}, \mathcal{T}^G$ are all closed subspaces of $\mathcal{S}$.
		
		\item It is a standard fact that if $\mathcal{T} \neq \emptyset$, then $\mathcal{T}$ is a simplex \cite[II.6.8.11]{Blackadar}. Let
		$$C^G = \left\{ c \phi : c \in \mathbb{R}_{\geq 0} , \phi \in \mathcal{T}^G \right\}$$ be the positive cone of $\mathcal{T}^G$, and let $\mathfrak{R}^\natural$ denote the (real) space of all bounded self-adjoint tracial linear functionals on $\mathfrak{A}$. Let $E^G$ denote the (real) space of all bounded self-adjoint $\Theta$-invariant linear functionals on $\mathfrak{A}$. We already know that $\mathcal{T}$ lives in a hyperplane of $\mathfrak{R}^\natural$ defined by the evaluation functional $\phi \mapsto \phi(1)$. It will therefore suffice to show that $E^G = C^G - C^G$, and that $E^G$ is a sub-lattice of $\mathfrak{R}^\natural$.
		
		Let $\phi^+, \phi^- \geq 0$ be positive functionals on $\mathfrak{A}$ such that $\phi = \phi^+ - \phi^-$ is tracial, and $\phi^+ \perp \phi^-$. By Lemma \ref{Tracial Jordan}, we know that $\phi^+, \phi^-$ are tracial. We claim that if $\phi \in E^G$, then $\phi^+, \phi^- \in C^G$. To prove this, let $g \in G$, and consider that $\phi^+ \circ \Theta_g, \phi^- \circ \Theta_g$ are both positive linear functionals such that $\phi = \left( \phi^+ \circ \Theta_g \right) - \left( \phi^- \circ \Theta_g \right)$.
		
		We claim that $\left( \phi^+ \circ \Theta_g \right) \perp \left( \phi^- \circ \Theta_g \right)$. Fix $\epsilon > 0$. We know that there exists $z \in \mathfrak{A}$ such that $\| z \| \leq 1, 0 \leq z$, and such that $\phi^+ \left( 1 - z \right) < \epsilon, \phi^- (z) < \epsilon$. Then $\Theta_{g^{-1}} (z)$ is a positive element of norm $\leq 1$ such that
		\begin{align*}
			\phi^+ \left( \Theta_g \left( \Theta_{g^{-1}} (1 - z) \right) \right)	& = \phi^+ (1 - z)	& < \epsilon, \\
			\phi^- \left( \Theta_g \left( \Theta_{g^{-1}} (z) \right) \right)	& = \phi^{-} (z)	& < \epsilon .
		\end{align*}
		Therefore $\left( \phi^+ \circ \Theta_g \right) - \left( \phi^- \circ \Theta_g \right)$ is a Jordan decomposition of $\phi$, and since the Jordan decomposition is unique, it follows that $\phi^+ = \phi^+ \circ \Theta_g, \phi^- = \phi^- \circ \Theta_g$, i.e. that $\phi^+, \phi^- \in C^G$. This means that $E^G = C^G - C^G$.
		
		We now want to show that $E^G = C^G - C^G$ is a sublattice of $E$, i.e. that it is closed under the lattice operations. Let $\phi, \psi \in E^G$. For this calculation, we draw on the identities listed in \cite[Theorem 1.3]{PositiveOperators}. Then
		\begin{align*}
			\phi \lor \psi	& = \left( \left( (\phi - \psi) + \psi \right) \lor (0 + \psi) \right) \\
			& = \left( (\phi - \psi) \lor 0 \right) + \psi \\
			& = (\phi - \psi)^+ + \psi , \\
			\phi \land \psi	& = \left( (\phi - \psi) + \psi \right) \land (0 + \psi) \\
			& = \left( (\phi - \psi) \land 0 \right) + \psi \\
			& = - \left( (- (\phi - \psi)) \lor 0 \right) + \psi \\
			& = - (\psi - \phi)^+ + \psi .
		\end{align*}
		Therefore, if $E^G$ is a real linear space and is closed under the operations $\phi \mapsto \phi^+, \phi \mapsto \phi^-$, then it is also closed under the lattice operations. Thus $E^G$ is a sublattice of $\mathfrak{R}^\natural$.
		
		Hence, the subset $\mathcal{T}^G$ is a compact metrizable simplex.
	\end{enumerate}
\end{proof}

In order to keep our treatment relatively self-contained, we define here several elementary concepts from Choquet theory that will be relevant in this section.

\begin{Def}
	Let $S_1, S_2$ be convex spaces. We call a map $T : S_1 \to S_2$ an \emph{affine map} if for every $v, w \in S_1 ; \; t \in [0, 1]$, we have
	$$T(t v + (1 - t) w) = t T(v) + (1 - t) T(w) .$$
	In the case where $S_2 \subseteq \mathbb{R}$, we call $T$ an \emph{affine functional}.
\end{Def}

\begin{Def}
Let $K$ be a convex subset of a locally convex real topological vector space $V$.
	\begin{enumerate}[label=(\alph*)]
		\item A point $k \in K$ is called an \emph{extreme point} of $K$ if for every pair of points $k_1, k_2 \in K$ and parameter $t \in [0, 1]$ such that $k = t k_1 + (1 - t) k_2$, either $k_1 = k_2$ or $t \in \{0, 1\}$. In other words, we call $k$ extreme if there is no nontrivial way of expressing $k$ as a convex combination of elements of $K$.
		\item The set of all extreme points of $K$ is denoted $\partial_e K$.
		\item A subset $F$ of $K$ is called a \emph{face} if for every pair $k_1, k_2 \in K, t \in (0, 1)$ such that $t k_1 + (1 - t) k_2 \in F$, we have that $k_1, k_2 \in F$.
		\item A face $F$ of $K$ is called an \emph{exposed face} of $K$ if there exists a continuous affine functional $\ell : K \to \mathbb{R}$ such that $\ell(x) = 0$ for all $x \in F$, and $\ell(y) < 0$ for all $y \in K \setminus F$.
		\item A point $k \in K$ is called an \emph{exposed point} of $K$ if $\{k\}$ is an exposed face of $K$.
		\item Given a subset $\mathcal{E}$ of $K$, the closed convex hull of $\mathcal{E}$ is written as $\overline{\operatorname{co}}(\mathcal{E})$.
	\end{enumerate}
\end{Def}

We now introduce the basic concepts in our treatment of ergodic optimization.

\begin{Def}
	Let $x \in \mathfrak{R}$ be a self-adjoint element, and let $K \subseteq \mathcal{S}^G$ be a compact convex subset of $\mathcal{S}^G$. Define a value $m \left( x \vert K \right)$ by
	$$m \left( x \vert K \right) : = \sup_{\psi \in K} \psi(x) .$$
	We say a state $\phi \in K$ is \emph{$(x \vert K)$-maximizing} if $\phi(x) = m (x \vert K)$. Let $K_\mathrm{max}(x) \subseteq K$ denote the set of all $(x \vert K)$-maximizing states. A state $\phi \in K$ is called \emph{uniquely $(x \vert K)$-maximizing} if $K_\mathrm{max}(x) = \{\phi\}$.
\end{Def}

\begin{Rmk}
	We note here a trivial inequality: If $K_1 \subseteq K_2$ are compact convex subsets of $\mathcal{S}^G$, then $m \left( x \vert K_1 \right) \leq m \left( x \vert K_2 \right)$, and in particular, we will always have $m \left( x \vert K_1 \right) \leq m \left( x \vert \mathcal{S}^G \right)$.
\end{Rmk}

We will single out one type of compact convex subset of $\mathcal{S}^G$ which will prove important later. Given a subset $A \subseteq \mathfrak{A}$, set $$\operatorname{Ann}(A) : = \left\{ \phi \in \mathcal{S}^G : A \subseteq \ker \phi \right\} .$$

When $\mathfrak{I} \subseteq \mathfrak{A}$ is a $\Theta$-invariant closed ideal of $\mathfrak{A}$, we have a bijective correspondence between the states in $\operatorname{Ann}(\mathfrak{I})$ and the states on $\mathfrak{A} / \mathfrak{I}$ invariant under the action induced by $\Theta$. We will be referring to this set again in Sections \ref{Singly generated UE} and \ref{Amenable UE}, when values of the form $m \left( a \vert \operatorname{Ann}(A) \right)$ come up in reference to certain ergodic averages. We observe that $\operatorname{Ann}(\{0\}) = \mathcal{S}^G$, and that $A \subseteq B \subseteq \mathfrak{A} \Rightarrow \operatorname{Ann}(A) \supseteq \operatorname{Ann}(B)$. There is also no a priori guarantee that $\operatorname{Ann}(A) \neq \emptyset$, since for example $\operatorname{Ann}(\{1\}) = \emptyset$. However, Proposition \ref{Nonempty annihilators} gives sufficient conditions for $\operatorname{Ann}(A)$ to be nonempty.

\begin{Prop}\label{Nonempty annihilators}
	Let $A \subseteq \mathfrak{A}$ be such that $\Theta_g A \subseteq A$ for all $g \in G$. Suppose there exists a state on $\mathfrak{A}$ which vanishes on $A$. Then $\operatorname{Ann}(A) \neq \emptyset$. In particular, if $\mathfrak{I} \subsetneq \mathfrak{A}$ is a proper closed two-sided ideal of $\mathfrak{A}$ for which $\Theta_g \mathfrak{I} = \mathfrak{I}$ for all $g \in G$, then $\operatorname{Ann}(\mathfrak{I}) \neq \emptyset$.
\end{Prop}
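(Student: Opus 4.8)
The plan is to realize $\operatorname{Ann}(A)$ as the intersection of $\mathcal{S}^G$ with a set to which the Krylov--Bogolyubov-type result Lemma \ref{K-B} applies. Define
$$K := \left\{ \phi \in \mathcal{S} : \phi(a) = 0 \text{ for all } a \in A \right\},$$
the set of all states annihilating $A$. By construction $K \cap \mathcal{S}^G = \operatorname{Ann}(A)$, so it suffices to produce an invariant state inside $K$, and this is exactly what the ``in particular'' clause of Lemma \ref{K-B} delivers, once we check that $K$ is a nonempty, convex, weak*-compact, $\Theta$-invariant subset of $\mathcal{S}$.

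The first three properties are essentially formal. The set $K$ is nonempty by the hypothesis that some state on $\mathfrak{A}$ vanishes on $A$. It is convex because each defining condition $\phi(a) = 0$ is affine in $\phi$, and it is weak*-compact because it is the intersection of the weak*-compact set $\mathcal{S}$ with the weak*-closed hyperplanes $\{\phi : \phi(a) = 0\}$, one for each $a \in A$ (the evaluation $\phi \mapsto \phi(a)$ being weak*-continuous). The only property requiring the hypothesis on $A$ is $\Theta$-invariance: given $\phi \in K$, $g \in G$, and $a \in A$, we have $(\phi \circ \Theta_g)(a) = \phi(\Theta_g a) = 0$, since $\Theta_g a \in \Theta_g A \subseteq A$; hence $\phi \circ \Theta_g \in K$. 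Invoking Lemma \ref{K-B} (whose amenability hypothesis on $G$ we assume, as in Corollary \ref{Tracial K-B}), we conclude $K \cap \mathcal{S}^G = \operatorname{Ann}(A) \neq \emptyset$.

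For the ``in particular'' clause, let $\mathfrak{I} \subsetneq \mathfrak{A}$ be a proper closed two-sided ideal with $\Theta_g \mathfrak{I} = \mathfrak{I}$ for all $g$. In particular $\Theta_g \mathfrak{I} \subseteq \mathfrak{I}$, so $A := \mathfrak{I}$ satisfies the hypothesis of the first part, and it remains only to supply one state on $\mathfrak{A}$ vanishing on $\mathfrak{I}$. Since $\mathfrak{I}$ is a proper closed two-sided ideal, the quotient $\mathfrak{A}/\mathfrak{I}$ is a nonzero unital C*-algebra; choosing any state $\bar{\phi}$ on $\mathfrak{A}/\mathfrak{I}$ and composing with the quotient map $\pi : \mathfrak{A} \to \mathfrak{A}/\mathfrak{I}$ yields a state $\bar{\phi} \circ \pi$ on $\mathfrak{A}$ whose kernel contains $\mathfrak{I}$. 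The first part then gives $\operatorname{Ann}(\mathfrak{I}) \neq \emptyset$.

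The argument is short because all the genuine work is done by Lemma \ref{K-B}; the only real content here is the bookkeeping identity $K \cap \mathcal{S}^G = \operatorname{Ann}(A)$ together with the observation that the invariance condition $\Theta_g A \subseteq A$ is precisely what makes $K$ stable under the dual maps $\phi \mapsto \phi \circ \Theta_g$. I therefore do not anticipate a serious obstacle; the two points to watch are the amenability assumption implicit in the use of Lemma \ref{K-B}, and, for the second part, the standard fact that a proper closed two-sided ideal in a unital C*-algebra has a nonzero unital quotient, which hence carries a state.
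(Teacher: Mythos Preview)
Your proof is correct and follows essentially the same route as the paper: define $K$ as the set of states annihilating $A$, verify the hypotheses of Lemma \ref{K-B}, and conclude that $K \cap \mathcal{S}^G = \operatorname{Ann}(A) \neq \emptyset$. For the ``in particular'' clause the paper takes a marginally more direct path --- it pulls back a $\tilde{\Theta}$-invariant state from the quotient $\mathfrak{A}/\mathfrak{I}$ rather than an arbitrary one, thereby landing in $\operatorname{Ann}(\mathfrak{I})$ without re-invoking the first part --- but your reduction to the first part is equally valid and arguably cleaner, since it avoids appealing to the existence of invariant states on the quotient system.
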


\begin{proof}
	Let $K \subseteq \mathcal{S}$ denote the family of all (not necessarily invariant) states on $\mathfrak{A}$ which vanish on $A$. Then if $\phi \in K$ and $a \in A$, then $\Theta_g a \in A$, so $\phi \circ \Theta_g$ vanishes on $A$. Therefore $\Theta_g K \subseteq K$ for all $g \in G$. It follows from Lemma \ref{K-B} that $K \cap \mathcal{S}^G = \operatorname{Ann}(A) \neq \emptyset$.
	
	Suppose $\mathfrak{I} \subsetneq \mathfrak{A}$ is a proper closed two-sided ideal of $\mathfrak{A}$ for which $\Theta_g \mathfrak{I} = \mathfrak{I}$ for all $g \in G$, and let $\pi : \mathfrak{A} \twoheadrightarrow \mathfrak{A} / \mathfrak{I}$ be the canonical quotient map. Let $\tilde{\Theta} : G \to \operatorname{Aut}(\mathfrak{A} / \mathfrak{I})$ be the induced action of $G$ on $\mathfrak{A} / \mathfrak{I}$ by $\tilde{\Theta}_g(a + \mathfrak{I}) = \Theta_g a + \mathfrak{I}$. Let $\psi$ be a $\tilde{\Theta}$-invariant state on $\mathfrak{A} / \mathfrak{I}$. Then $\psi \circ \pi$ is a $\Theta$-invariant state on $\mathfrak{A}$ which vanishes on $\mathfrak{I}$, i.e. $\psi \circ \pi \in \operatorname{Ann}(\mathfrak{I})$.
\end{proof}

\begin{Prop}\label{Nonempty maximizing states}
	Let $K \subseteq \mathcal{S}^G$ be a nonempty compact convex subset of $\mathcal{S}^G$, and let $x \in \mathfrak{R}$. Then $K_\mathrm{max}(x)$ is a nonempty, compact, exposed face of $K$.
\end{Prop}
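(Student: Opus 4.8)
The plan is to reduce all three assertions to a single observation: the evaluation functional $\ell : K \to \mathbb{R}$ defined by $\ell(\psi) := \psi(x)$ is a weak*-continuous affine functional. It is real-valued precisely because $x \in \mathfrak{R}$ is self-adjoint and each $\psi \in K$ is a state, hence $\psi(x) \in \mathbb{R}$; this is the role of the hypothesis $x \in \mathfrak{R}$. Weak*-continuity is immediate from the definition of the weak*-topology, which is the coarsest topology making each such evaluation continuous, and affineness (indeed linearity) is immediate since each $\psi$ acts linearly on $x$. Thus $\ell$ is the restriction to $K$ of a weak*-continuous real-linear functional.

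For nonemptiness, I would note that since $K$ is weak*-compact and $\ell$ is weak*-continuous, $\ell$ attains its supremum $m(x \vert K) = \sup_{\psi \in K} \psi(x)$ on $K$; every maximizer lies in $K_{\mathrm{max}}(x)$, so $K_{\mathrm{max}}(x) \neq \emptyset$. For compactness, I would observe that $K_{\mathrm{max}}(x) = \ell^{-1}\left( \{ m(x \vert K) \} \right)$ is a weak*-closed subset of the compact set $K$, and hence is itself weak*-compact.

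The substantive step is the exposed-face property, which I would establish by exhibiting the exposing functional directly. Set $\tilde{\ell}(\psi) := \psi(x) - m(x \vert K)$, which is continuous and affine as a translate of $\ell$ by a constant. By construction $\tilde{\ell}$ vanishes on $K_{\mathrm{max}}(x)$, and for any $y \in K \setminus K_{\mathrm{max}}(x)$ we have $y(x) \leq m(x \vert K)$ with $y(x) \neq m(x \vert K)$, so $\tilde{\ell}(y) < 0$ strictly. This is exactly the condition in the definition of an exposed face. It then remains only to confirm that $K_{\mathrm{max}}(x)$ is genuinely a face: if $t k_1 + (1 - t) k_2 \in K_{\mathrm{max}}(x)$ with $k_1, k_2 \in K$ and $t \in (0, 1)$, then $t k_1(x) + (1 - t) k_2(x) = m(x \vert K)$ while each $k_i(x) \leq m(x \vert K)$, forcing $k_1(x) = k_2(x) = m(x \vert K)$, whence $k_1, k_2 \in K_{\mathrm{max}}(x)$.

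I do not anticipate any genuine obstacle here; the argument is elementary once $\ell$ is identified as continuous and affine. The only point deserving a moment's care is the \emph{strict} inequality in the definition of an exposed face, which is what distinguishes an exposed face from a merely closed face. This strictness is obtained for free because $K_{\mathrm{max}}(x)$ is defined as the exact fiber $\ell^{-1}(m(x \vert K))$ over the supremum, so every point of $K$ outside it satisfies $\psi(x) < m(x \vert K)$.
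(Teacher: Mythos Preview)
Your proof is correct and follows essentially the same approach as the paper: both arguments identify the exposing functional $\psi \mapsto \psi(x) - m(x \vert K)$, and both obtain compactness by writing $K_{\mathrm{max}}(x)$ as the preimage of a point under a continuous map. Your nonemptiness argument (continuous functions attain suprema on compacta) is marginally cleaner than the paper's sequential extraction, and your explicit verification of the face property is redundant---it already follows from the existence of the exposing functional---but neither difference is substantive.
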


\begin{proof}
	To see that $K_\mathrm{max}(x)$ is nonempty, for each $n \in \mathbb{N}$, let $\phi_n \in K$ such that $\phi_n(x) \geq m(x \vert K) - \frac{1}{n}$. Then since $K$ is compact, the sequence $(\phi_n)_{n = 1}^\infty$ has a convergent subsequence. Let $\phi$ be the limit of a convergent subsequence of $(\phi_n)_{n = 1}^\infty$. Then $\phi$ is $(x \vert K)$-maximizing.
	
	To see that $K_\mathrm{max}(x)$ is compact, consider that
	$$K_\mathrm{max}(x) = \left\{ \phi \in K : \phi(x) = m (x \vert K) \right\} ,$$
	which is a closed subset of $K$. As for being an exposed face, consider the continuous affine functional $\ell : K \to \mathbb{R}$ given by
	$$\ell(\phi) = \phi(x) - m(x \vert K) .$$
	Then the functional $\ell$ exposes $K_\mathrm{max}(x \vert K)$, since it is nonpositive on all of $K$ and vanishes exactly on $K_\mathrm{max}(x)$.
\end{proof}

The following result describes the ways in which some ergodic optimizations interact with equivariant *-homomorphisms of C*-dynamical systems.

\begin{Thm}\label{Ergodic optimization through *-homomorphisms}
	Let $\left( \mathfrak{A} , G , \Theta \right) , \left( \tilde{\mathfrak{A}} , G , \tilde{\Theta} \right)$ be two C*-dynamical systems, and let $\pi : \mathfrak{A} \to \tilde{\mathfrak{A}}$ be a surjective *-homomorphism such that
	\begin{align*}
		\tilde{\Theta}_g \circ \pi	& = \pi \circ \Theta_g	& (\forall g \in G) .
	\end{align*}
	Let $\tilde{\mathcal{S}}^G$ denote the space of $\tilde{\Theta}$-invariant states on $\tilde{\Theta}$. Then $m \left( \pi(a) \vert \tilde{\mathcal{S}}^G \right) = m \left( a \vert \operatorname{Ann}(\ker \pi) \right)$.
\end{Thm}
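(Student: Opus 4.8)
The plan is to exhibit the pullback map $\Xi : \psi \mapsto \psi \circ \pi$ as a bijection from $\tilde{\mathcal{S}}^G$ onto $\operatorname{Ann}(\ker \pi)$, under which the relevant evaluation is preserved, so that the two suprema defining the quantities in question range over the \emph{same} set of real numbers. Once this is established, we have $(\Xi \psi)(a) = \psi(\pi(a))$ for every $\psi \in \tilde{\mathcal{S}}^G$, and hence
\[
m \left( \pi(a) \vert \tilde{\mathcal{S}}^G \right) = \sup_{\psi \in \tilde{\mathcal{S}}^G} \psi(\pi(a)) = \sup_{\phi \in \operatorname{Ann}(\ker \pi)} \phi(a) = m \left( a \vert \operatorname{Ann}(\ker \pi) \right),
\]
where the crucial middle equality is precisely the assertion that $\Xi$ is a bijection onto $\operatorname{Ann}(\ker \pi)$. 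I would first note that $\ker \pi$ is a $\Theta$-invariant closed two-sided ideal (if $b \in \ker \pi$ then $\pi(\Theta_g b) = \tilde{\Theta}_g \pi(b) = 0$), so that $\operatorname{Ann}(\ker \pi)$ is a weak*-compact convex subset of $\mathcal{S}^G$, consistent with the setting in which $m(\,\cdot \vert \,\cdot\,)$ was defined; this is in fact an instance of the correspondence between $\operatorname{Ann}(\mathfrak{I})$ and invariant states on $\mathfrak{A}/\mathfrak{I}$ remarked on earlier.

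To establish the bijection I would proceed in two steps. First, $\Xi$ maps $\tilde{\mathcal{S}}^G$ into $\operatorname{Ann}(\ker \pi)$: for a state $\psi$ on $\tilde{\mathfrak{A}}$, the functional $\psi \circ \pi$ is a state on $\mathfrak{A}$ (positivity is inherited since $\pi$ is a $*$-homomorphism, and unitality holds because a surjective $*$-homomorphism of unital C*-algebras sends $1$ to $1$), it vanishes on $\ker \pi$, and the intertwining hypothesis gives $(\psi \circ \pi) \circ \Theta_g = \psi \circ \tilde{\Theta}_g \circ \pi = \psi \circ \pi$ when $\psi$ is $\tilde{\Theta}$-invariant, so $\psi \circ \pi \in \operatorname{Ann}(\ker \pi)$. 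Second, I would build the inverse using surjectivity of $\pi$: given $\phi \in \operatorname{Ann}(\ker \pi)$, set $\psi(b) := \phi(a)$ for any $a$ with $\pi(a) = b$; this is well defined exactly because $\ker \pi \subseteq \ker \phi$, and one checks directly that $\psi$ is a state with $\psi \circ \pi = \phi$. Its $\tilde{\Theta}$-invariance follows from $\psi(\tilde{\Theta}_g b) = \psi(\pi(\Theta_g a)) = \phi(\Theta_g a) = \phi(a) = \psi(b)$, using $\Theta$-invariance of $\phi$ in the penultimate equality.

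The step most likely to demand care is this last converse invariance direction, namely deducing $\tilde{\Theta}$-invariance of $\psi$ from $\Theta$-invariance of $\phi = \psi \circ \pi$. This is exactly where surjectivity of $\pi$ is indispensable: it guarantees that every element of $\tilde{\mathfrak{A}}$ has the form $\pi(a)$, so that the invariance of $\psi$ can be read off from that of $\phi$ together with the intertwining relation. Everything else is routine, and no separability or nonemptiness hypotheses are needed for the identity itself; I would simply observe that if $\tilde{\mathcal{S}}^G = \emptyset$ then $\operatorname{Ann}(\ker \pi) = \emptyset$ as well, so that the two suprema agree even in that degenerate case.
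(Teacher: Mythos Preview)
Your proposal is correct and follows essentially the same approach as the paper: both establish the bijection $\psi \leftrightarrow \psi \circ \pi$ between $\tilde{\mathcal{S}}^G$ and $\operatorname{Ann}(\ker \pi)$ and read off the equality of suprema from it. The paper phrases the final step as a pair of inequalities using maximizing states on each side, whereas you observe directly that the bijection is evaluation-preserving; your version is slightly more streamlined but not substantively different.
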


\begin{proof}
	Let $\tilde{\mathcal{S}}^G$ denote the space of $\tilde{\Theta}$-invariant states on $\tilde{\mathfrak{A}}$. We claim that there is a natural bijective correspondence between $\tilde{\mathcal{S}}^G$ and $\operatorname{Ann}(\ker \pi)$. If $\phi$ is a $\tilde{\Theta}$-invariant state on $\tilde{\mathfrak{A}}$, then we can pull it back to a $\Theta$-invariant state $\phi_0$ on $\mathfrak{A}$ by
	$$\phi_0 = \phi \circ \pi .$$
	This $\phi_0$ obviously vanishes on $\ker \pi$, and is $\Theta$-invariant by virtue of the equivariance property of $\pi$. Conversely, if we start with a $\Theta$-invariant state $\psi$ on $\mathfrak{A}$ that vanishes on $\ker \pi$, then we can push it to a $\tilde{\Theta}$-invariant state $\tilde{\psi}$ on $\tilde{ \mathfrak{A} }$ by
	$$\tilde{\psi} \circ \pi = \psi .$$
	
	We claim now that
	$$m \left( a \vert \operatorname{Ann}(\ker \pi) \right) = m \left( \pi(a) \vert \tilde{\mathcal{S}}^G \right).$$
	Let $\phi$ be a $\left( \pi(a) \vert \tilde{\mathcal{S}}^G \right)$-maximizing state on $\tilde{ \mathfrak{A} }$. Then $\phi \circ \pi \in \operatorname{Ann}(\ker \pi)$, so
	$$m \left( \pi(a) \vert \tilde{\mathcal{S}}^G \right) = \phi(\pi(a)) \leq m \left( a \vert \operatorname{Ann}(\ker \pi) \right) .$$
	On the other hand, if $\psi \in \operatorname{Ann}(\ker \pi)$ is $\left( a \vert \operatorname{Ann}(\ker \pi) \right)$-maximizing, then let $\tilde{\psi}$ be such that $\tilde{\psi} \circ \pi = \psi$. Then $\tilde{\psi} \in \tilde{\mathcal{S}}^G$, so
	$$m \left( a \vert \operatorname{Ann}(\ker \pi) \right) = \psi(a) = \tilde{\psi}(\pi(a)) \leq m \left( a \vert \tilde{\mathcal{S}}^G \right) .$$
\end{proof}

The assumption in Theorem \ref{Ergodic optimization through *-homomorphisms} that $\pi$ is surjective is actually superfluous, as shown in Corollary \ref{Ergodic optimization through non-surjective *-homomorphisms}. We will later provide a proof of this stronger claim that uses the gauge functional, introduced in the context of actions of $\mathbb{Z}$ in Section \ref{Singly generated UE} and in the context of actions of amenable groups in Section \ref{Amenable UE}.

Moreover, the proof of Theorem \ref{Ergodic optimization through *-homomorphisms} can be extended to establish a correspondence between ergodic optimization over certain compact convex subsets of $\tilde{\mathcal{S}}^G$ and certain compact convex subsets of $\operatorname{Ann}(\ker \pi)$. For example under the same hypotheses, if $\mathcal{T} \neq \emptyset$, then the proof could be modified in a simple manner to establish that $m \left( \pi(a) \vert \tilde{\mathcal{T}}^G \right) = m \left( a \vert \operatorname{Ann}(\ker \pi) \cap \mathcal{T}^G \right)$, where $\tilde{\mathcal{T}}^G$ denotes the $\tilde{\Theta}$-invariant tracial states on $\tilde{\mathfrak{A}}$. In lieu of stating Theorem \ref{Ergodic optimization through *-homomorphisms} in greater generality, we content ourselves to state this special case (which we will use in future sections) and remark that the argument can be generalized further.

The following characterization of exposed faces in compact metrizable simplices will prove useful.

\begin{Lem}\label{Closed faces are exposed}
	Let $K$ be a compact metrizable simplex. Then every closed face of $K$ is exposed.
\end{Lem}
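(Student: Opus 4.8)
The plan is to construct, for the given closed face $F$, a single continuous affine functional $\ell : K \to \mathbb{R}$ with $\ell \leq 0$ on $K$, with $\ell|_F = 0$, and with $\ell < 0$ on $K \setminus F$; this is exactly what it means for $F$ to be exposed. I would build $\ell$ in two stages: first produce, for each $y \in K \setminus F$, a continuous affine $a_y$ satisfying $a_y \leq 0$, $a_y|_F = 0$, and $a_y(y) < 0$, and then amalgamate these into a single $\ell$ using the metrizability of $K$.

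For the pointwise separation, the key observation is that, because $F$ is a \emph{face} which is also \emph{closed}, the function $g := -\ind_{K \setminus F}$ (equal to $0$ on $F$ and $-1$ off $F$) is a bounded, upper semicontinuous, \emph{convex} function on $K$. Upper semicontinuity is immediate from $F$ being closed, and convexity follows from the face property: a proper convex combination $t x_1 + (1 - t) x_2$ (with $t \in (0,1)$) lies in $F$ only when both $x_1, x_2 \in F$, so $\ind_{K \setminus F}(t x_1 + (1 - t) x_2) \geq t\, \ind_{K \setminus F}(x_1) + (1 - t)\, \ind_{K \setminus F}(x_2)$, i.e. $g$ is convex. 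Now fix $y \notin F$. Hahn--Banach separation of the point $y$ from the closed convex set $F$ gives a continuous affine $b$ with $b \geq 0$ on $F$ and $b(y) < 0$; after rescaling by a small $\epsilon > 0$ so that $\epsilon b \geq -1$ on the compact set $K$, the function $w_y := \min(\epsilon b, 0)$ is a continuous \emph{concave} function with $g \leq w_y \leq 0$, $w_y|_F = 0$, and $w_y(y) < 0$. I then invoke Edwards' separation theorem for simplices (see, e.g., \cite{Phelps}), which, since $g$ is bounded convex upper semicontinuous, $w_y$ is bounded concave lower semicontinuous, and $g \leq w_y$, yields a continuous affine $a_y$ with $g \leq a_y \leq w_y$. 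Unwinding the inequalities gives $a_y \leq w_y \leq 0$; on $F$ we have $0 = g \leq a_y \leq w_y = 0$, so $a_y|_F = 0$; and $a_y(y) \leq w_y(y) < 0$, as required. Note also that $a_y$ takes values in $[-1, 0]$, since $-1 \leq g \leq a_y \leq 0$.

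To pass from these pointwise functionals to a single exposing functional, I use that $K$ is compact and metrizable, hence second countable, so the relatively open set $K \setminus F$ is Lindelöf. The open sets $\{a_y < 0\}$, indexed by $y \in K \setminus F$, cover $K \setminus F$, so there is a countable subcover indexed by $y_1, y_2, \dots$. Since each $a_{y_n}$ takes values in $[-1, 0]$, the series $\ell := \sum_{n} 2^{-n} a_{y_n}$ converges uniformly to a continuous affine function with $\ell \leq 0$ and $\ell|_F = 0$; and for any $y \notin F$ some term $a_{y_n}(y)$ is strictly negative while all others are nonpositive, giving $\ell(y) < 0$. Thus $\ell$ exposes $F$.

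The main obstacle is the pointwise separation step, and specifically the decisive use of the simplex hypothesis: for a general compact convex $K$ a closed face need not be exposed, so the simplex structure must be essential, and here it enters exactly through Edwards' separation theorem (equivalently, through the lattice property of the positive cone in Definition \ref{Simplex definition}). A tempting alternative---representing each $x \in K$ by its unique maximal measure $\mu_x$ and separating via the affine function $x \mapsto \mu_x(K \setminus F)$---separates $F$ correctly but fails to deliver a \emph{continuous} functional, since $x \mapsto \mu_x$ is in general only affine and Borel; reducing instead to the convex function $-\ind_{K \setminus F}$ and applying Edwards' theorem is precisely what repairs this continuity defect.
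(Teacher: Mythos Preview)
Your argument is correct. The paper does not actually prove this lemma; it simply cites \cite[Theorem 7.4]{Davies}. Your proof supplies a complete, self-contained argument in its place: the reduction to the upper-semicontinuous convex indicator $g = -\ind_{K \setminus F}$, the pointwise separation via Edwards' theorem (which is where the simplex hypothesis is spent), and the countable amalgamation using metrizability are all standard and sound. This is essentially the route one finds in the Choquet-theory literature behind the citation, so you have reconstructed the content of the reference rather than taken a genuinely different path; the main thing your write-up buys over the paper's one-line citation is that it makes transparent exactly where the simplex and metrizability hypotheses enter.
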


\begin{proof}
	See \cite[Theorem 7.4]{Davies}.
\end{proof}

The theorem we are building to in this section is as follows.

\begin{Thm}\label{Simplices and exposed faces}
	Let $K \subseteq \mathcal{S}^G$ be a compact simplex. Then the closed faces of $K$ are exactly the sets of the form $K_\mathrm{max}(x)$ for some $x \in \mathfrak{R}$.
\end{Thm}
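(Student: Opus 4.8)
The plan is to prove the two inclusions separately, the first being immediate and the second carrying all the content. For the easy direction, I would observe that every set of the form $K_{\mathrm{max}}(x)$ is already a closed face: this is precisely Proposition \ref{Nonempty maximizing states}, which shows $K_{\mathrm{max}}(x)$ is a nonempty, compact, \emph{exposed} face of $K$ (exposed faces are faces, and compact subsets of $K$ are closed). So every $K_{\mathrm{max}}(x)$ sits among the closed faces, and it remains only to show the reverse containment.

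For the hard direction, let $F$ be a closed face of $K$. Since $K \subseteq \mathcal{S}^G$ is metrizable by Lemma \ref{Simplices}(i), it is a compact metrizable simplex, so Lemma \ref{Closed faces are exposed} supplies a continuous affine functional $\ell : K \to \mathbb{R}$ with $\ell \leq 0$ on $K$ and $F = \{\psi \in K : \ell(\psi) = 0\}$. The goal is then to realize $\ell$ (or some affine function with the same zero set) as an evaluation $\psi \mapsto \psi(x)$ for a single $x \in \mathfrak{R}$: once this is achieved, $m\left(x \vert K\right) = 0$ and $K_{\mathrm{max}}(x) = \{\psi \in K : \psi(x) = 0\} = F$, which finishes the proof. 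The representation mechanism is clean: $\mathfrak{R}^\natural$ is the dual of the real Banach space $\mathfrak{R}$, so the weak*-continuous linear functionals on $\mathfrak{R}^\natural$ are exactly the evaluations $\phi \mapsto \phi(x)$ with $x \in \mathfrak{R}$; and because $K$ lies in the hyperplane $\{\phi(1) = 1\}$, additive constants are free, since $\psi \mapsto \psi(x) + c$ agrees on $K$ with $\psi \mapsto \psi(x + c1)$. Thus realizing $\ell$ as an \emph{ambient} weak*-continuous affine functional is the same as realizing it as a pure evaluation.

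The main obstacle, and where I expect all the work to lie, is precisely that a continuous affine function on the compact convex set $K$ need not extend to a weak*-continuous affine function on $\mathfrak{R}^\natural$: the evaluations form only a uniformly \emph{dense} subspace of the continuous affine functions on $K$, and a crude approximation of $\ell$ destroys the exact zero set (an approximating evaluation will wobble on $F$ rather than being constant there, so its argmax is a proper subset of $F$). To upgrade density to an exact exposing evaluation, I would use metrizability to write $K \setminus F = \bigcup_n C_n$ with each $C_n$ compact and disjoint from $F$, and for each $n$ produce $x_n \in \mathfrak{R}$ whose evaluation $\hat{x}_n$ is \emph{exactly} constant (normalized to $0$) on $F$, is $\leq 0$ on all of $K$, and is strictly negative on $C_n$. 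Then $x := \sum_n 2^{-n} x_n / (1 + \|x_n\|)$ converges in norm in $\mathfrak{R}$, its evaluation is $\leq 0$ on $K$, equals $0$ on $F$, and is strictly negative off $F$ (each $\psi \notin F$ lies in some $C_n$), giving $K_{\mathrm{max}}(x) = F$.

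Producing each $x_n$ reduces, via the bipolar theorem, to separating a point $\psi_1 \in C_n$ from $F$ by an evaluation that is constant on $F$, which is possible exactly when $\psi_1 \notin \overline{\operatorname{aff}}(F)$, the weak*-closed affine hull of $F$. The purely algebraic identity $K \cap \operatorname{aff}(F) = F$ holds for any face: if $q \in K$ is an affine combination of points of $F$, then grouping the positive and negative coefficients exhibits some point of $F$ as a \emph{proper} convex combination involving $q$, and the face property forces $q \in F$. The genuinely delicate step — the heart of the argument — is passing from the algebraic affine hull to the weak*-closed affine hull, i.e. establishing $K \cap \overline{\operatorname{aff}}(F) = F$; I expect this to be where the simplex hypothesis is really used (for instance through maximal representing measures, using that $\psi \in F$ iff $\mu_\psi(F) = 1$ for the closed face $F$). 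Everything else is either cited (Proposition \ref{Nonempty maximizing states}, Lemma \ref{Closed faces are exposed}) or routine duality and summation.
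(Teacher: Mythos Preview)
Your easy direction is correct and matches the paper's. For the hard direction, however, your route diverges from the paper's and contains a genuine gap.

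The paper never tries to manufacture $x$ as a convergent series of separating evaluations. Instead it extends the exposing functional $\ell$ itself: first homogeneously to the cone $P=\mathbb{R}_{\ge 0}\cdot K$, then to $V=P-P$ by the formula $\tilde\ell(v)=\ell_1(v^{+})-\ell_1(v^{-})$, where $v^{+},v^{-}$ come from the \emph{lattice} ordering that the simplex hypothesis provides (this is Theorem~\ref{Extension Theorem}, and it is precisely where ``simplex'' is used). One then extends $\tilde\ell$ to a weak*-continuous linear functional on all of $\mathfrak{R}^\natural$ by a cited result, and such a functional is automatically an evaluation $\phi\mapsto\phi(x)$ for some $x\in\mathfrak{R}$. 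The exposing property of $\ell$ transfers verbatim, so $F=K_{\max}(x)$.

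The gap in your plan is the bipolar step. Separating $\psi_1$ from $\overline{\operatorname{aff}}(F)$ by Hahn--Banach yields an evaluation $\hat y$ that is constant on $F$ with $\hat y(\psi_1)\neq 0$; it gives you \emph{no} control over the sign of $\hat y$ on the rest of $K$. Your condition~(b), namely $\hat x_n\le 0$ on all of $K$, is an additional constraint that does not follow from $\psi_1\notin\overline{\operatorname{aff}}(F)$. If you carry out the polar calculus honestly, the cone $W=\{x\in\mathfrak{R}:\hat x|_F=0,\ \hat x|_K\le 0\}$ has dual not $\overline{\operatorname{aff}}(F)$ but (up to signs) the closed convex cone generated by $F$ together with $-K$; showing that $K\setminus F$ avoids \emph{that} set is strictly harder than your affine-hull statement and, as far as I can see, ultimately requires the same lattice-extension idea the paper uses. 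Without~(b) on each summand, your series can attain its maximum over $K$ off $F$, and $K_{\max}(x)\neq F$. Your proposed representing-measure argument for the ``delicate step'' also does not close the gap, since points of $\overline{\operatorname{aff}}(F)$ need not lie in $K$ and so need not admit representing measures at all.
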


Before we can prove our main theorem of this section, we will need to prove the following result, which gives us a means by which to build an important linear functional.

\begin{Thm}\label{Extension Theorem}
	Let $K \subseteq \mathcal{S}^G$ be a compact simplex, and let $\ell : K \to \mathbb{R}$ be a continuous affine functional. Then there exists a continuous linear functional $\tilde{\ell} : \overline{\operatorname{span}}_\mathbb{R}(K) \to \mathbb{R}$ such that $\tilde{\ell} \vert_{K} = \ell$.
\end{Thm}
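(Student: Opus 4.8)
The plan is to construct $\tilde\ell$ in three stages: first on the positive cone generated by $K$, then by linearity on $\operatorname{span}_\mathbb{R}(K)$, and finally by continuity on the closure. Write $P = \{ ck : c \in \mathbb{R}_{\geq 0},\, k \in K \}$ for the cone over $K$, so that $\operatorname{span}_\mathbb{R}(K) = P - P$. Before anything else I would normalize: since every $k \in K \subseteq \mathcal{S}^G$ satisfies $k(1) = 1$, the restriction to $K$ of the evaluation functional $e : \psi \mapsto \psi(1)$ is the constant function $1$. Because $e$ is itself a continuous linear functional on all of $\mathfrak{R}^\natural$ (it is evaluation at the self-adjoint element $1$), I may replace $\ell$ by $\ell + \lambda e|_K$ for large $\lambda$ and assume without loss of generality that $\ell \geq 0$ on $K$; once the shifted functional is extended, subtracting $\lambda e$ recovers an extension of the original $\ell$.

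Next I would define the extension on $P$. For $p \in P \setminus \{0\}$ the representation $p = ck$ with $c > 0$, $k \in K$ is unique, since applying $e$ forces $c = p(1)$ and then $k = p/c$; this is exactly where the hyperplane hypothesis from Definition \ref{Simplex definition} is used. So I may set $\tilde\ell(ck) := c\,\ell(k)$ and $\tilde\ell(0) := 0$. Positive homogeneity is automatic, and additivity on $P$ follows from affineness: for $p = ak_1,\, q = bk_2 \in P$ with $a+b > 0$, convexity gives $p + q = (a+b)k$ with $k = \tfrac{a}{a+b}k_1 + \tfrac{b}{a+b}k_2 \in K$, whence $\tilde\ell(p+q) = (a+b)\ell(k) = a\,\ell(k_1) + b\,\ell(k_2) = \tilde\ell(p) + \tilde\ell(q)$. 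An additive, positively homogeneous map on a cone extends uniquely to a real-linear functional on $P - P = \operatorname{span}_\mathbb{R}(K)$ restricting to $\ell$ on $K$. Note that this step uses only the hyperplane condition and affineness, not the simplex hypothesis.

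The crux is continuity. With $\tilde\ell \geq 0$ on $P$ in force, I would control $\tilde\ell$ on the symmetric convex hull $C := \operatorname{co}(K \cup -K)$. By the two-hyperplane structure every element of $C$ collapses to the form $tk_1 - (1-t)k_2$ with $t \in [0,1]$, $k_1, k_2 \in K$; since $(t,k_1,k_2) \mapsto tk_1 - (1-t)k_2$ is a continuous surjection from the compact set $[0,1] \times K \times K$ onto $C$, and $\tilde\ell$ composed with it equals $t\,\ell(k_1) - (1-t)\ell(k_2)$, the functional $\tilde\ell$ is weak*-continuous on the weak*-compact set $C$ and bounded there by $M := \sup_{k \in K}|\ell(k)|$. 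As $C$ is precisely the unit ball of the base norm $\|\psi\|_P := \inf\{ p_1(1) + p_2(1) : \psi = p_1 - p_2,\ p_i \in P\}$, this gives $|\tilde\ell(\psi)| \leq M\|\psi\|_P$, so $\tilde\ell$ is continuous and extends by density to $\overline{\operatorname{span}}_\mathbb{R}(K)$.

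I expect the main obstacle to be exactly this continuity step, and in particular the comparison between the base norm $\|\cdot\|_P$ and the ambient norm of $\mathfrak{R}^\natural$ (equivalently, realizing the extension as evaluation against a genuine self-adjoint element, which is what Theorem \ref{Simplices and exposed faces} will need). Here the order structure carries the argument. For the maximal case $K = \mathcal{S}^G$, the computation in the proof of Lemma \ref{Simplices}(ii) shows that the Jordan parts $\psi^\pm$ of an invariant self-adjoint functional are themselves invariant and positive, hence lie in $P$; since $\|\psi\| = \|\psi^+\| + \|\psi^-\| = \psi^+(1) + \psi^-(1)$, the base norm and the ambient norm agree and continuity is automatic. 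For a general simplex $K$ one instead invokes the simplex hypothesis: $P$ lattice-orders $P - P$, so each $\psi$ has a modulus $|\psi| = \psi_+ + \psi_- \in P$, and positivity yields $|\tilde\ell(\psi)| \leq \tilde\ell(|\psi|) \leq M\,|\psi|(1)$, controlling $\tilde\ell$ through the lattice decomposition and delivering the desired continuous extension.
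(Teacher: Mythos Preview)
Your construction on the cone $P$ and the linear extension to $P-P$ match the paper's Lemmas~\ref{First extension} and~\ref{Second extension} essentially verbatim; the preliminary normalization to $\ell\ge 0$ is not in the paper but is harmless. The substantive divergence is in the continuity step. The paper argues weak*-continuity directly by showing $\ker\tilde\ell$ is weak*-closed: given $v_n\to v$ in the kernel, use uniform boundedness and Banach--Alaoglu to pass to subsequential limits $m_1,m_2\in P$ of the lattice parts $v_n^+,v_n^-$, then compute $\tilde\ell(v)=\ell_1(m_1)-\ell_1(m_2)=\lim\tilde\ell(v_n)=0$ using the already-established weak*-continuity of $\ell_1$ on $P$. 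You instead bound $\tilde\ell$ on the compact symmetric hull $C=\operatorname{co}(K\cup -K)$ and deduce continuity for the base norm $\|\cdot\|_P$.

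This is where your argument does not close. What the downstream application (Theorem~\ref{Simplices and exposed faces}) requires is \emph{weak*}-continuity, so that $\tilde\ell$ extends to a weak*-continuous functional on $\mathfrak{R}^\natural$ and is realized as evaluation at some $x\in\mathfrak{R}$. Your estimate $|\tilde\ell(\psi)|\le M\,|\psi|(1)$ via the lattice modulus is still a base-norm bound: $|\psi|(1)=\psi_+(1)+\psi_-(1)=\|\psi\|_P$, not the ambient norm of $\mathfrak{R}^\natural$. To finish along your lines you would need either a comparison showing ambient-norm-bounded subsets of $P-P$ are base-norm-bounded (so that weak*-continuity on $C$ plus homogeneity plus Krein--\v{S}mulian gives weak*-continuity globally), or else to revert to a closed-kernel argument as the paper does. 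You correctly identify this as ``the main obstacle,'' but the sketch in your last paragraph does not supply the missing comparison; it restates the base-norm bound in lattice language.
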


To prove this theorem, we break it up into several parts, attaining the extension $\tilde{\ell}$ as the final step of a few subsequent extensions of $\ell$.

\begin{Lem}\label{First extension}
	Let $K \subseteq \mathcal{S}^G$ be a compact metrizable simplex, and let $\ell : K \to \mathbb{R}$ be a continuous affine functional. Let $P = \left\{ c \phi : c \in \mathbb{R}_{\geq 0}, \phi \in K \right\}$. Then there exists a continuous functional $\ell_1 : P \to \mathbb{R}$ satisfying the following conditions for all $f_1, f_2 \in P ; c \in \mathbb{R}_{\geq 0}$:
	\begin{enumerate}[label=(\alph*)]
		\item $\ell_1(c f_1) = c \ell_1(f_1)$,
		\item $\ell_1(f_1 + f_2) = \ell_1(f_1) + \ell_2(f_2)$,
		\item $\ell_1 \vert_K = \ell$.
	\end{enumerate}
\end{Lem}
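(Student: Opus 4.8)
The plan is to extend $\ell$ from $K$ to the cone $P$ by exploiting the fact that, since $K \subseteq \mathcal{S}^G$ lies in the hyperplane $\{f \in \mathfrak{R}^\natural : f(1) = 1\}$ (which avoids the origin), every nonzero $f \in P$ admits a \emph{unique} representation $f = c\phi$ with $c > 0$ and $\phi \in K$: indeed, evaluating at $1$ forces $c = f(1)$ and hence $\phi = f/f(1)$. I would therefore define $\ell_1(c\phi) := c\,\ell(\phi)$ for $c \geq 0,\ \phi \in K$ (with $\ell_1(0) = 0$), which is well-defined by this uniqueness. Properties (a) and (c) are then immediate: homogeneity follows since $\ell_1(d \cdot c\phi) = dc\,\ell(\phi) = d\,\ell_1(c\phi)$ for $d \geq 0$, and $\ell_1\vert_K = \ell$ since $\ell_1(\phi) = \ell_1(1 \cdot \phi) = \ell(\phi)$.

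The additivity (b) is where the affineness of $\ell$ enters, and I regard it as the conceptual heart of the argument. For $f_i = c_i\phi_i$ ($i = 1,2$) with $c_1, c_2 > 0$, note that $f_1 + f_2 = c_1\phi_1 + c_2\phi_2$ evaluates to $c_1 + c_2 > 0$ at $1$, so its normalization $\frac{f_1 + f_2}{c_1 + c_2} = \frac{c_1}{c_1 + c_2}\phi_1 + \frac{c_2}{c_1 + c_2}\phi_2$ is a convex combination of $\phi_1, \phi_2$ and hence lies in $K$ by convexity. Applying the definition of $\ell_1$ and then the affineness of $\ell$ gives $\ell_1(f_1 + f_2) = (c_1 + c_2)\,\ell\!\left( \frac{c_1}{c_1 + c_2}\phi_1 + \frac{c_2}{c_1 + c_2}\phi_2 \right) = c_1\ell(\phi_1) + c_2\ell(\phi_2) = \ell_1(f_1) + \ell_1(f_2)$. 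The degenerate cases in which some $c_i = 0$ are trivial, since then the corresponding summand vanishes.

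It remains to check continuity, which I would handle in two regimes. On $P \setminus \{0\}$ the map $f \mapsto f(1)$ is weak*-continuous and nonvanishing, so $f \mapsto f/f(1)$ is a continuous map into $K$; composing with the continuous $\ell$ and multiplying by $f(1)$ shows $\ell_1$ is continuous there. The only delicate point — and the step I expect to require the most care — is continuity at the cone point $0$: here I would use that $K$ is compact, so $M := \sup_{\phi \in K} |\ell(\phi)| < \infty$, yielding the estimate $|\ell_1(f)| = c\,|\ell(\phi)| \leq M\,f(1)$ for $f = c\phi \in P$; since $f \mapsto f(1)$ is weak*-continuous and sends $0$ to $0$, it follows that $\ell_1(f) \to 0 = \ell_1(0)$ as $f \to 0$. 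Together these establish continuity on all of $P$, completing the construction. (The metrizability hypothesis is not essential to this argument, which works equally well with nets, but it does let one phrase the continuity checks sequentially.)
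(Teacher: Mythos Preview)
Your proposal is correct and follows essentially the same approach as the paper: define $\ell_1(c\phi) = c\,\ell(\phi)$ using the unique representation, verify (a) and (c) directly, obtain (b) from affineness via the convex combination $\frac{c_1}{c_1+c_2}\phi_1 + \frac{c_2}{c_1+c_2}\phi_2$, and check continuity separately away from and at the cone point using boundedness of $\ell$ on the compact $K$. Your continuity argument is phrased slightly more cleanly (via the continuous maps $f \mapsto f(1)$ and $f \mapsto f/f(1)$) whereas the paper works through sequences, but the substance is identical.
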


\begin{proof}
	Note that every nonzero element of $P$ can be expressed uniquely as $c \phi$ for some $c \in \mathbb{R}_{\geq 0} \setminus \{0\} , \phi \in K$. As such we define
	$$\ell_1(c \phi) = \begin{cases}
		c \ell(\phi)	& c > 0 \\
		0	& c = 0
	\end{cases}$$
	It is immediately clear that this $\ell_1$ satisfies conditions (a) and (c), leaving only (b) to check.
	
	Now, suppose that $f_1 = c_1 \phi_1, f_2 = c_2 \phi_2$ for some $\phi_1, \phi_2 \in K ; c_1, c_2 \in \mathbb{R}_{\geq 0}$. Consider first the case where at least one of $c_1, c_2$ are nonzero.
	Then
	\begin{align*}
		f_1 + f_2	& = c_1 \phi_1 + c_2 \phi_2	\\
		& = (c_1 + c_2) \left( \frac{c_1}{c_1 + c_2} \phi_1 + \frac{c_2}{c_1 + c_2} \phi_2 \right) \\
		\Rightarrow \ell_1(f_1 + f_2)	& = \ell_1 \left( c_1 \phi_1 + c_2 \phi_2 \right)	\\
		& = (c_1 + c_2) \ell \left( \frac{c_1}{c_1 + c_2} \phi_1 + \frac{c_2}{c_1 + c_2} \phi_2 \right) \\
		\textrm{[because $\ell$ is affine]}& = (c_1 + c_2) \left( \frac{c_1}{c_1 + c_2} \ell(\phi_1) + \frac{c_2}{c_1 + c_2} \ell(\phi_2) \right) \\
		& = c_1 \ell(\phi_1) + c_2 \ell(\phi_2) \\
		& = \ell_1(c_1 \phi_1) + \ell_1(c_2 \phi_2) \\
		& = \ell_1(f_1) + \ell_1(f_2) .
	\end{align*}
	In the event that $c_1 = c_2 = 0$, then the additivity property attains trivially.
	
	It remains now to show that $\ell_1$ is continuous. We will check continuity at nonzero points in $P$, and then at $0 \in P$. First, consider the case where $c \phi \in P \setminus \{0\}$, and $c \in \mathbb{R}_{\geq 0}, \phi \in K$. Suppose that $(c_n \phi_n)_n$ is a sequence in $P$ converging in the weak*-topology to $c \phi$. We claim that $c_n \to c$ in $\mathbb{R}$, and $\phi_n \to \phi$ in the weak*-topology.
	
	We first observe that $(c_n \phi_n)(1) = c_n$, so $(c_n)_{n}$ converges in $\mathbb{R}_{\geq 0}$ to $c$, meaning in particular that for sufficiently large $n$, we have that $c_n \in \left[ \frac{c}{2} , \frac{3c}{2} \right] $. Now, if $\lambda : \mathfrak{R} \to \mathbb{R}$ is a norm-continuous linear functional, then
	\begin{align*}
		\lambda(\phi_n)	& = \frac{1}{c_n} \lambda(c_n \phi_n) \\
		& \to \frac{1}{c} \lambda(c \phi) \\
		& = \lambda(\phi) .
	\end{align*}
	Therefore $c_n \to c , \phi_n \to \phi$. Thus we can compute
	\begin{align*}
		\left| \ell_1(c \phi) - \ell_1(c_n \phi_n) \right|	& \leq \left| \ell_1(c \phi) - \ell_1(c_n \phi) \right| + \left| \ell_1(c_n \phi) - \ell_1(c_n \phi_n) \right| \\
		& = |c - c_n| \cdot \left| \ell(\phi) \right| + |c_n| \cdot \left| \ell(\phi) - \ell(\phi_n) \right| \\
		& \leq |c - c_n| \left( \sup_{\phi \in K} |\ell(\phi)| \right) + \frac{3c}{2} \left| \ell(\phi) - \ell(\phi_n) \right| \\
		& \stackrel{n \to \infty}{\to} 0 ,
	\end{align*}
	where $\sup_{\phi \in K} |\ell(\phi)|$ must be finite because $K$ is weak*-compact, and $|\ell(\phi) - \ell(\phi_n)| \stackrel{n \to \infty}{\to} 0$ because $\ell$ is weak*-continuous.
	
	Now, suppose that $(c_n \phi_n)_{n = 1}^\infty$ converges to $0$. Then again we have that $c_n \to 0$ by the same argument used above (i.e. $c_n = (c_n \phi_n)(1)$). Therefore
	$$
	\left| \ell_1 (c_n \phi_n) \right| = |c_n| \cdot \left| \ell (\phi_n) \right| \leq |c_n| \left( \sup_{\phi \in K} |\ell(\phi)| \right) \stackrel{n \to \infty}{\to} 0 .
	$$
	
	We can thus conclude that $\ell_1$ is weak*-continuous.
\end{proof}

\begin{Lem}\label{Second extension}
	Let $\ell_1, P$ be as in Lemma \ref{First extension}, and let $V = P - P$. Then there exists a continuous linear functional $\tilde{\ell} : V \to \mathbb{R}$ such that $\tilde{\ell} \vert_{P} = \ell_1$.
\end{Lem}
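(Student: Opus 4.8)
The plan is to define the extension by the only formula linearity permits: for $v = f_1 - f_2$ with $f_1, f_2 \in P$, set $\tilde\ell(v) := \ell_1(f_1) - \ell_1(f_2)$. First I would check this is well defined. If $f_1 - f_2 = g_1 - g_2$ with $f_i, g_i \in P$, then $f_1 + g_2 = f_2 + g_1$ is an identity between elements of $P$, so the additivity property (b) of Lemma \ref{First extension} gives $\ell_1(f_1) + \ell_1(g_2) = \ell_1(f_2) + \ell_1(g_1)$, i.e. $\ell_1(f_1) - \ell_1(f_2) = \ell_1(g_1) - \ell_1(g_2)$. Linearity is then immediate: additivity of $\tilde\ell$ follows by adding representatives and invoking (b), positive homogeneity follows from (a), and homogeneity under negative scalars follows by swapping the roles of $f_1$ and $f_2$. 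Since $\ell_1(0) = 0$ by (a), writing $p = p - 0$ shows $\tilde\ell\vert_P = \ell_1$ (and hence $\tilde\ell\vert_K = \ell$).

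The only real work is continuity, and this is where I expect the main obstacle to lie, precisely because the representation $v = f_1 - f_2$ is highly non-unique and the assignment $v \mapsto (f_1, f_2)$ is not obviously continuous. My strategy is to pin down a canonical, norm-controlled decomposition using the hypothesis that $K$ is a simplex: then $V = P - P$ is a vector lattice under the order induced by $P$, so each $v \in V$ has the canonical decomposition $v = v^+ - v^-$ with $v^+ = v \vee 0$ and $v^- = (-v) \vee 0$ both lying in $P$. I would then transport convergence through this decomposition using the weak*-continuity of $\ell_1$ on $P$ established in Lemma \ref{First extension}.

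To make this precise I would first record that $\ell_1$ is norm-bounded on $P$: every nonzero $p \in P$ is $c\phi$ with $\phi \in K \subseteq \mathcal{S}^G$, so $\|p\| = c = p(1)$ because states have norm one, whence $|\ell_1(p)| = c|\ell(\phi)| \le M \|p\|$ with $M = \sup_{\phi \in K} |\ell(\phi)| < \infty$ by weak*-compactness of $K$. The crucial point is then to dominate $\|v^+\| + \|v^-\|$ by $\|v\|$. Here I would appeal to the fact that for $\Theta$-invariant self-adjoint functionals the Jordan decomposition remains inside the invariant cone — exactly the phenomenon established in the proof of Lemma \ref{Simplices}(ii) — so that the lattice decomposition coincides with the Jordan decomposition and $\|v\| = \|v^+\| + \|v^-\|$. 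This yields $|\tilde\ell(v)| \le M(\|v^+\| + \|v^-\|) = M\|v\|$, so $\tilde\ell$ is norm-bounded, hence norm-continuous. I regard this norm control of the decomposition as the genuine crux: it is exactly what the simplex hypothesis (through the invariance of the Jordan parts) supplies.

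Finally, for the weak*-continuity that the later representation of $\tilde\ell$ as evaluation at an element of $\mathfrak{R}$ will require, I would upgrade the argument as follows. Given a weak*-convergent sequence $v_n \to v$ inside a norm-bounded subset of $V$, the estimate above makes the decompositions $v_n = v_n^+ - v_n^-$ norm-bounded, hence confined to a weak*-compact, metrizable ball; passing to a subsequence, $v_n^\pm$ converge weak* to elements of the weak*-closed cone $P$ whose difference is $v$, and weak*-continuity of $\ell_1$ on $P$ forces $\tilde\ell(v_n) \to \tilde\ell(v)$ along that subsequence. A subsequence-of-subsequence argument promotes this to convergence of the full sequence, giving weak*-continuity on bounded sets, which together with norm-boundedness (via a Krein--\v{S}mulian-type argument) yields a genuinely weak*-continuous functional and allows the continuous extension to $\overline{\operatorname{span}}_\mathbb{R}(K)$ carried out in Theorem \ref{Extension Theorem}.
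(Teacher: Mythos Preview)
Your algebraic setup (well-definedness via $f_1 + g_2 = f_2 + g_1$, linearity, the restriction property) is essentially identical to the paper's, just organized in the opposite order: you start from an arbitrary representation and prove well-definedness, while the paper starts from the canonical lattice formula $\tilde\ell(v) = \ell_1(v^+) - \ell_1(v^-)$ and then checks it agrees with any representation $v = f - g$. For continuity the paper argues that $\ker\tilde\ell$ is weak*-closed (invoking the ``continuous iff closed kernel'' criterion from \cite{RudinFunctional}), using the Uniform Boundedness Principle on a weak*-convergent sequence and then the same compactness/subsequence extraction of limits of $v_n^\pm$ that you outline; your Krein--\v{S}mulian packaging is a reasonable variant of the same idea.

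There is, however, a genuine gap in your norm-bound step. You claim that the $V$-lattice decomposition $v = v^+ - v^-$ coincides with the Jordan decomposition, citing the proof of Lemma~\ref{Simplices}(ii). But that lemma establishes this only for $K = \mathcal{T}^G$; for a \emph{general} compact simplex $K \subseteq \mathcal{S}^G$ the Jordan parts of $v \in V$ need not lie in $P$, nor even in $V$. Concretely, take $K = [\phi_0, \phi_1]$ a segment between two non-orthogonal invariant states: the $V$-lattice parts of $\phi_0 - \phi_1$ are $\phi_0$ and $\phi_1$ themselves, whereas the Jordan parts are strictly smaller positive functionals that typically fall outside $\operatorname{span}\{\phi_0,\phi_1\}$. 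So the identity $\|v\| = \|v^+\| + \|v^-\|$ on which your bound $|\tilde\ell(v)| \le M\|v\|$ rests is simply unavailable here, and with it your control on $\|v_n^\pm\|$. The paper never asserts this coincidence; it passes directly to subsequential weak*-limits of $(v_n^\pm)$ once $\|v_n\| \le 1$ --- though, to be fair, the paper also leaves the norm-boundedness of the lattice parts $(v_n^\pm)$ implicit, so this is the one step where both arguments would benefit from more care.
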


\begin{proof}
	Define $\tilde{\ell} : V \to \mathbb{R}$ by
	$$\tilde{\ell}(v) = \ell_1 \left( v^+ \right) - \ell_1 \left( v^- \right),$$
	where $v^+, v^-$ are meant in the sense of the lattice structure $V$ possesses by virtue of $K$ being a simplex.
	
	Our first claim is that if $f, g \in P$ such that $v = f - g$, then $\tilde{\ell}(v) = \ell_1(f) - \ell_1(g)$. To see this, we observe that $f + v^- = g + v^+ \in P$. Therefore
	\begin{align*}
		\ell_1 \left( f + v^- \right)	& = \ell_1 \left( g + v^+ \right) \\
		= \ell_1(f) + \ell_1 \left( v^- \right)	& = \ell_1(g) + \ell_1 \left( v^+ \right) \\
		\Rightarrow \ell_1(f) - \ell_1(g)	& = \ell_1 \left( v^+ \right) - \ell_1 \left( v^- \right) \\
		& = \tilde{\ell}(v) .
	\end{align*}
	This makes linearity fairly straightforward to check. First, to confirm additivity, let $v, w \in V$. Then $v + w = \left( v^+ + w^+ \right) - \left( v^- + w^- \right)$, where $v^+ + w^+, v^- + w^- \in P$. Thus
	\begin{align*}
		\tilde{\ell}(v + w)	& = \ell_1 \left( v^+ + w^+ \right) - \ell_1 \left( v^- + w^- \right) \\
		& = \ell_1 \left( v^+ \right) + \ell_1 \left( w^+ \right) - \ell_1 \left( v^- \right) - \ell_1 \left( w^- \right) \\
		& = \ell_1 \left( v^+ \right) - \ell_1 \left( v^- \right) + \ell_1\left( w^+ \right) - \ell_1 \left( w^- \right) \\
		& = \tilde{\ell}(v) + \tilde{\ell}(w) .
	\end{align*}
	To check homogeneity, let $c \in \mathbb{R}$. If $c \geq 0$, then $cv^+, c v^- \in P$, and $c v^+ - c v^- = c v$; on the other hand, if $c \leq 0$, then $- c v^- , - c v^+ \in P$, and $c v = - c v^- + c v^+$. In both cases, homogeneity is straightforward to show. This proves that $\tilde{\ell}$ is linear.
	
	It is also quick to show that $\tilde{\ell} \vert_{P} = \ell_1$, since if $v \in P$, then $v = v^+$, so $\tilde{\ell}(v) = \ell_1 \left( v^+ \right) - 0 = \ell_1(v)$.
	
	It remains now to show that $\tilde{\ell}$ is continuous. By \cite[Theorem 1.18]{RudinFunctional}, it will suffice to show that $\ker \tilde{\ell}$ is weak*-closed. To prove the kernel is closed, let $(v_n)_{n = 1}^\infty$ be a sequence in $\ker \tilde{\ell}$ converging in the weak*-topology to $v \in V$. By the Uniform Boundedness Principle, it follows that $\sup_{n} \| v_n \| < \infty$. By rescaling, we can assume without loss of generality that $\|v_n\| \leq 1$ for all $n \in \mathbb{N}$, and since the unit ball $B \subseteq V$ is weak*-closed by Banach-Alaoglu, we can infer that $\| v \| \leq 1$.
	
	Since the unit ball $B$ is weak*-compact, it follows that the sequences $\left( v_n^+ \right)_{n = 1}^\infty , \left( v_n^- \right)_{n = 1}^\infty$ have convergent subsequences. Let $(n_j)_{j = 1}^\infty$ be a subsequence along which $v_{n_j}^+ \to m_1 \in P, v_{n_j}^- \to m_2 \in P$. Then if $x \in \mathfrak{R}$, then
	\begin{align*}
		v(x)	& = \lim_{n \to \infty} v_n(x) \\
		& = \lim_{n \to \infty} \left( v_n^+(x) - v_n^-(x) \right) \\
		& = \lim_{j \to \infty} \left( v_{n_j}^+(x) - v_{n_j}^-(x) \right) \\
		& = \left( \lim_{j \to \infty} v_{n_j}^+(x) \right) - \left( \lim_{j \to \infty} v_{n_j}^-(x) \right) \\
		& = m_1(x) - m_2(x) .
	\end{align*}
	Therefore $v = m_1 - m_2$, so
	\begin{align*}
		\tilde{\ell}(v)	& = \tilde{\ell} (m_1) - \tilde{\ell} (m_2) \\
		& = \left( \lim_{j \to \infty} \tilde{\ell} \left( v_{n_j}^+ \right) \right) - \left( \lim_{j \to \infty} \tilde{\ell} \left( v_{n_j}^- \right) \right) \\
		& = \lim_{j \to \infty} \left( \tilde{\ell} \left( v_{n_j}^+ \right) - \tilde{\ell} \left( v_{n_j}^- \right) \right) \\
		& = \lim_{j \to \infty} \tilde{\ell} \left( v_{n_j} \right) \\
		& = \lim_{j \to \infty} 0	\\
		& = 0 .
	\end{align*}
	Therefore, we can conclude that $\tilde{\ell}$ is weak*-continuous.
\end{proof}

\begin{proof}[Proof of Theorem \ref{Extension Theorem}]
	This follows from Lemmas \ref{First extension} and \ref{Second extension}.
\end{proof}

\begin{proof}[Proof of Theorem \ref{Simplices and exposed faces}]
	Let $F \subseteq K$ be a closed face of $K$. By Lemma \ref{Closed faces are exposed}, the face $F$ is exposed, so let $\ell : K \to \mathbb{R}$ be a weak*-continuous affine functional such that
	\begin{align*}
		\ell(k)	& = 0	& (\forall k \in F) , \\
		\ell(k)	& < 0	& (\forall k \in K \setminus F) .
	\end{align*}
	Set
	$$V = \left\{ c_1 \phi_1 - c_2 \phi_2 : c_1, c_2 \in \mathbb{R}_{\geq 0} ; \phi_1, \phi_2 \in K \right\} ,$$
	and let $\tilde{\ell} : V \to \mathbb{R}$ be a continuous linear extension of $\ell$ to $V$ whose existence is promised by Theorem \ref{Extension Theorem}. We can then extend $\tilde{\ell} : V \to \mathbb{R}$ to a weak*-continuous linear functional $\ell' : \mathfrak{R}^\natural \to \mathbb{R}$ \cite[Theorem 3.6]{PositiveOperators}. There thus exists some $x \in \mathbb{R}$ such that $\ell'(\phi) = \phi(x)$ for all $\phi \in \mathfrak{R}^\natural$ \cite[Theorem 5.2]{Baggett}. In particular, we have $\ell'(v) = v(x)$ for all $v \in V$. Therefore $F = K_\mathrm{max}(x)$.
	
	The converse is contained in Proposition \ref{Nonempty maximizing states}.
\end{proof}

In particular, we can recover the following corollary.

\begin{Cor}\label{Uniquely maximizing states}
	If $\phi \in \partial_e K$, then there exists $x \in \mathfrak{R}$ such that $\phi$ is uniquely $(x \vert K)$-maximizing, i.e. such that $\{ \phi \} =  K_\mathrm{max}(x)$.
\end{Cor}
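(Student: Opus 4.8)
The plan is to deduce this as an immediate consequence of Theorem \ref{Simplices and exposed faces}, since that theorem already identifies the closed faces of a compact simplex $K$ as precisely the sets of the form $K_\mathrm{max}(x)$. The only work, then, is to verify that a single extreme point $\phi \in \partial_e K$ gives rise to a closed face, namely the singleton $\{ \phi \}$.

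First I would check that $\{ \phi \}$ is a face of $K$. Suppose $k_1, k_2 \in K$ and $t \in (0, 1)$ satisfy $t k_1 + (1 - t) k_2 \in \{ \phi \}$, i.e. $\phi = t k_1 + (1 - t) k_2$. Since $\phi$ is an extreme point and $t \notin \{ 0, 1 \}$, the definition of extreme point forces $k_1 = k_2$; substituting back gives $\phi = t k_1 + (1 - t) k_1 = k_1$, so $k_1 = k_2 = \phi \in \{ \phi \}$. Hence $\{ \phi \}$ satisfies the defining condition of a face. Next I would note that $\{ \phi \}$ is closed: by Lemma \ref{Simplices}(i) the space $\mathcal{S}^G$ is (compact) metrizable, hence Hausdorff, so every singleton is closed, and in particular $\{ \phi \}$ is a closed subset of $K$.

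Having established that $\{ \phi \}$ is a closed face of the compact simplex $K$, I would invoke Theorem \ref{Simplices and exposed faces} directly: the closed faces of $K$ are exactly the sets $K_\mathrm{max}(x)$ for $x \in \mathfrak{R}$. Applying this to the closed face $\{ \phi \}$ yields some $x \in \mathfrak{R}$ with $\{ \phi \} = K_\mathrm{max}(x)$, which by definition says precisely that $\phi$ is uniquely $(x \vert K)$-maximizing. I do not expect any genuine obstacle here, as all the substantive machinery (the extension of the exposing functional to a weak*-continuous linear functional represented by an element of $\mathfrak{R}$, via Lemmas \ref{First extension} and \ref{Second extension} and Theorem \ref{Extension Theorem}) is already packaged inside Theorem \ref{Simplices and exposed faces}; the only point requiring care is the routine unpacking of the extreme-point condition to confirm that $\{ \phi \}$ is indeed a face.
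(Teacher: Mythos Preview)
Your proposal is correct and follows essentially the same approach as the paper: both observe that $\{\phi\}$ is a closed face of $K$ and then invoke Theorem \ref{Simplices and exposed faces}. You simply spell out in more detail why a singleton extreme point is a closed face, which the paper asserts without comment.
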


\begin{proof}
	The singleton $\{\phi\}$ is a closed face, and by Lemma \ref{Closed faces are exposed} is therefore an exposed face. Apply Theorem \ref{Simplices and exposed faces}.
\end{proof}

We have developed the language of ergodic optimization here in a somewhat atypical way, where we speak not of $x$-maximizing states \emph{simpliciter}, but of a state that is maximizing relative to a compact convex subset $K$ of $\mathcal{S}^G$, especially a compact simplex $K$. This notion of relative ergodic optimization has precedent in \cite{ObservableMeasures}. For our purposes, this relative ergodic optimization means we can consider ergodic optimization problems over different \emph{types} of states. In Section \ref{NC Herman section}, we will broaden our scope somewhat to consider ergodic optimization in the noncommutative setting relative to a set of states that aren't necessarily $\Theta$-invariant.

Since Theorem \ref{Simplices and exposed faces} applies in cases where $K$ is a simplex, we will conclude this section by describing some situations where $\mathcal{S}^G$ is a compact metrizable simplex.

For each $\phi \in \mathcal{S}^G$, let $\pi_\phi : \mathfrak{A} \to \mathscr{B} (\mathscr{H}_\phi)$ be the GNS representation corresponding to $\phi$. Define a unitary representation $u_\phi : G \to \mathbb{U}(\mathscr{H}_\phi)$ of $G$ by
$$u_\phi(g) \pi_\phi(a) = \pi_\phi \left( \Theta_{g^{-1}} (a) \right) ,$$
extending this from $\pi_\phi(\mathfrak{A})$ to $\mathscr{H}_\phi$. Set
$$E_\phi = \left\{ v \in \mathscr{H}_\phi : u_\phi(v) = v \textrm{ for all } g \in G \right\} .$$
Let $P_\phi : \mathscr{H}_\phi \twoheadrightarrow E_\phi$ be the orthogonal projection (in the functional-analytic sense) of $\mathscr{H}_\phi$ onto $E_\phi$. We call the C*-dynamical system $(\mathfrak{A}, G, \Theta)$ a \emph{$G$-abelian} system if for every $\phi \in \mathcal{S}^G$, the family of operators $\left\{ P_\phi \pi_\phi(a) P_\phi \in \mathscr{B}(\mathscr{H}_\phi) : a \in \mathfrak{A} \right\}$ is mutually commutative.

We record here a handful of germane facts about $G$-abelian systems.

\begin{Prop}
	If $(\mathfrak{A}, G, \Theta)$ is $G$-abelian, then $\mathcal{S}^G$ is a simplex.
\end{Prop}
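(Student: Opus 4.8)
The plan is to reduce the simplex property to the uniqueness of maximal representing measures, and then to extract that uniqueness from the $G$-abelian hypothesis via the decomposition theory of invariant states. By Choquet's theorem (see \cite{Phelps}), a compact metrizable convex set is a simplex exactly when each of its points is the barycenter of a \emph{unique} maximal (boundary) representing measure. Since $\mathcal{S}^G$ is compact and metrizable by Lemma \ref{Simplices}(i), it suffices to fix $\phi \in \mathcal{S}^G$ and show that $\phi$ has exactly one maximal representing measure carried by $\mathcal{S}^G$.

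The bridge to operator algebras is the classical correspondence, in the form developed by Bratteli and Robinson, between orthogonal measures with barycenter $\phi$ that are carried by $\mathcal{S}^G$ and abelian von Neumann subalgebras of the invariant commutant $\mathcal{R}_\phi := \pi_\phi(\mathfrak{A})' \cap u_\phi(G)'$, this being order-preserving for inclusion of algebras against the Choquet ordering of measures. Under this dictionary the maximal orthogonal measure corresponds to a maximal abelian subalgebra of $\mathcal{R}_\phi$, and the maximal representing measure is realized by such an orthogonal measure. The entire problem therefore collapses to a single operator-algebraic assertion: that $\mathcal{R}_\phi$ is itself abelian, for then $\mathcal{R}_\phi$ is its own unique maximal abelian subalgebra and $\phi$ has a unique maximal representing measure.

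The heart of the argument is thus to show that $G$-abelianness forces $\mathcal{R}_\phi$ to be abelian, and here I would compress everything onto the subspace $E_\phi$ of $u_\phi(G)$-invariant vectors. Writing $\Omega_\phi$ for the GNS cyclic vector, invariance of $\phi$ gives $u_\phi(g)\Omega_\phi = \Omega_\phi$, so $\Omega_\phi \in E_\phi$ and $P_\phi \Omega_\phi = \Omega_\phi$; consequently $E_\phi = \overline{P_\phi \pi_\phi(\mathfrak{A}) P_\phi\, \Omega_\phi}$, i.e. $\Omega_\phi$ is cyclic on $E_\phi$ for the von Neumann algebra $\mathcal{A}_\phi := \{ P_\phi \pi_\phi(a) P_\phi : a \in \mathfrak{A} \}''$. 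The $G$-abelian hypothesis says precisely that $\mathcal{A}_\phi$ is abelian, and an abelian von Neumann algebra possessing a cyclic vector is maximal abelian; hence $\mathcal{A}_\phi$ equals its own commutant inside $\mathscr{B}(E_\phi)$. Now every $T \in \mathcal{R}_\phi$ commutes with $P_\phi$ (since it commutes with $u_\phi(G)$, which preserves $E_\phi$) and with each $\pi_\phi(a)$, so its compression $T|_{E_\phi}$ lies in $\mathcal{A}_\phi' = \mathcal{A}_\phi$; moreover $T \mapsto T|_{E_\phi}$ is injective, because $\Omega_\phi$ is separating for $\pi_\phi(\mathfrak{A})' \supseteq \mathcal{R}_\phi$ while $T\Omega_\phi = (T|_{E_\phi})\Omega_\phi$. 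An injective $*$-homomorphism into an abelian algebra has abelian domain, so $\mathcal{R}_\phi$ is abelian.

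The step I expect to be the main obstacle is not this compression lemma, which is self-contained, but the careful invocation of the decomposition theory in the reduction: namely, verifying that the maximal orthogonal measure attached to the (now abelian) algebra $\mathcal{R}_\phi$ is genuinely maximal among \emph{all} representing measures, and that its uniqueness follows from $\mathcal{R}_\phi$ admitting no proper enlargement as an abelian algebra. This is where one must lean on the full strength of the orthogonal-measure machinery—in particular, that the maximal measures in this setting are orthogonal and pseudosupported on the extreme (ergodic) invariant states—rather than on anything special to the present framework. Once that identification is secured, uniqueness of the maximal representing measure holds for every $\phi \in \mathcal{S}^G$, and the simplex property follows.
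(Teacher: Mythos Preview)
The paper does not give a proof of this proposition at all; it simply cites \cite[Theorem 3.1.14]{Sakai}. Your sketch is essentially a reconstruction of that standard argument (the same one appearing in Sakai and in Bratteli--Robinson's treatment of ergodic decomposition): reduce the simplex property to uniqueness of the maximal boundary measure via Choquet--Meyer, translate that into uniqueness of a maximal abelian subalgebra of $\mathcal{R}_\phi = \pi_\phi(\mathfrak{A})' \cap u_\phi(G)'$ via the orthogonal-measure correspondence, and then prove $\mathcal{R}_\phi$ is abelian by compressing to $E_\phi$ and exploiting that an abelian von Neumann algebra with a cyclic vector is maximal abelian. The compression lemma is carried out cleanly---in particular your observation that $P_\phi \in u_\phi(G)''$, that $\Omega_\phi$ is cyclic for $\mathcal{A}_\phi$ on $E_\phi$, and that $\Omega_\phi$ separates $\pi_\phi(\mathfrak{A})'$ are exactly the ingredients needed.

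You are right to flag the orthogonal-measure reduction as the place where the real work hides: the statement that maximal representing measures on $\mathcal{S}^G$ are orthogonal and correspond bijectively and order-preservingly to abelian von Neumann subalgebras of $\mathcal{R}_\phi$ is a nontrivial theorem (essentially Tomita's theorem on orthogonal measures, as in Bratteli--Robinson \S4.3), and it is not something one can improvise. But granting that input, your outline is correct and matches the cited source; there is no gap beyond the one you have already identified and attributed.
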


\begin{proof}
	See \cite[Theorem 3.1.14]{Sakai}.
\end{proof}

\begin{Def}
	We call a system $(\mathfrak{A}, G, \Theta)$ \emph{asymptotically abelian} if there exists a sequence $(g_n)_{n = 1}^\infty$ in $G$ such that
	$$\left[ \Theta_{g_n} a, b \right] \stackrel{n \to \infty}{\to} 0 $$
	for all $a, b \in \mathfrak{A}$, where $\left[ \cdot , \cdot \right]$ is the Lie bracket $[x, y] = xy - yx$ on $\mathfrak{A}$.
\end{Def}

\begin{Prop}
	If $(\mathfrak{A}, G, \Theta)$ is asymptotically abelian, then it is also $G$-abelian.
\end{Prop}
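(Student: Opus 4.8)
The plan is to fix an arbitrary invariant state $\phi \in \mathcal{S}^G$ and work inside its GNS triple $(\mathscr{H}_\phi, \pi_\phi, \xi_\phi)$, where the unitary representation $u_\phi$ implements the dynamics through the covariance relation $u_\phi(g) \pi_\phi(a) u_\phi(g)^* = \pi_\phi(\Theta_{g^{-1}} a)$. First I would record the elementary but crucial facts that $\xi_\phi \in E_\phi$, that $P_\phi$ commutes with every $u_\phi(g)$, and that $u_\phi(g) P_\phi = P_\phi u_\phi(g) = P_\phi$, since $P_\phi$ projects onto the $u_\phi(G)$-fixed vectors and $u_\phi(g)$ acts as the identity on them. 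Combining these with the covariance relation yields at once the key identity $P_\phi \pi_\phi(\Theta_g a) P_\phi = P_\phi \pi_\phi(a) P_\phi$ for every $g \in G$ and $a \in \mathfrak{A}$; averaging over a finite set gives $P_\phi \pi_\phi(\operatorname{Avg}_F a) P_\phi = P_\phi \pi_\phi(a) P_\phi$, which is the hook for pushing the dynamics into the compression.

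Next I would reduce the mutual commutativity of the family $\{ P_\phi \pi_\phi(a) P_\phi \}$ to a single scalar condition. By $\mathbb{C}$-linearity it suffices to treat self-adjoint $a, b$, for which $T_a := P_\phi \pi_\phi(a) P_\phi$ is self-adjoint and $[T_a, T_b]$ is skew-adjoint; hence $T_a T_b = T_b T_a$ is equivalent to $\langle [T_a, T_b] \xi, \xi \rangle = 0$ for all $\xi \in E_\phi$. Unwinding the inner products using $P_\phi \xi = \xi$ and $P_\phi = P_\phi^* = P_\phi^2$ shows that this is exactly the assertion that $\langle P_\phi \pi_\phi(a) \xi, P_\phi \pi_\phi(b) \xi \rangle \in \mathbb{R}$ for all self-adjoint $a, b$ and all $\xi \in E_\phi$. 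Thus the whole proposition comes down to the reality of this family of inner products.

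The engine for producing $P_\phi$ is the mean ergodic (Alaoglu--Birkhoff) theorem: the projection onto the $u_\phi(G)$-fixed space lies in the closed convex hull of $\{ u_\phi(g) : g \in G \}$, so $P_\phi$ is a limit of averages $c_\alpha = \sum_i \lambda_i^\alpha u_\phi(h_i^\alpha)$. Feeding this into $\langle P_\phi \pi_\phi(a) \xi, P_\phi \pi_\phi(b) \xi \rangle$ and using $u_\phi(h) \pi_\phi(b) \xi = \pi_\phi(\Theta_{h^{-1}} b) \xi$ (valid for $\xi \in E_\phi$) rewrites the failure of reality of this quantity as a limit of averages of the scalars $\langle \pi_\phi([\Theta_{h^{-1}} b, a]) \xi, \xi \rangle$. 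This is precisely where asymptotic abelianness should enter: along the distinguished sequence $\| [\Theta_{g_n} a, b] \| \to 0$, and applying automorphisms propagates this smallness to the relevant commutators, which should force the averaged commutators to vanish and the inner product to be real.

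The hard part will be reconciling the two averaging schemes. The mean ergodic theorem furnishes $P_\phi$ through convex combinations of $u_\phi(g)$ over \emph{arbitrary} group elements, whereas asymptotic abelianness controls commutators only along the fixed sequence $(g_n)$ and the difference set it generates; note also that the middle projection $P_\phi$ cannot be deleted at the norm level, so it must be dissolved through the averaging rather than estimated away. The clean route is to take the approximating averages to be the Ces\`aro/F\o lner averages built from $(g_n)$ itself, which is legitimate exactly when these converge strongly to $P_\phi$ --- the von Neumann mean ergodic theorem for $G = \mathbb{Z}$ with $g_n = n$, and the mean ergodic theorem for a F\o lner sequence when $G$ is amenable. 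In that regime asymptotic abelianness transfers to the difference elements $g_m^{-1} g_n$ (by the isometry of the $\Theta_g$), the proportion of ``bad'' differences is negligible, and the averaged commutators tend to $0$. I would therefore carry out the computation with these structured averages and treat the general statement as reducing to this case, with the control of the compressing projection $P_\phi$ by the ergodic averages as the recurring technical crux.
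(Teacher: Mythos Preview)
The paper does not actually prove this proposition; it simply cites Sakai~\cite[Proposition 3.1.16]{Sakai}. So there is no in-paper argument to compare yours to. That said, your outline follows the standard GNS route, and most of it is correct: the covariance identities, the reduction to the reality of $\langle P_\phi \pi_\phi(a)\xi, P_\phi \pi_\phi(b)\xi\rangle$ for self-adjoint $a,b$ and $\xi\in E_\phi$, and the invocation of Alaoglu--Birkhoff to write $P_\phi$ as an SOT-limit of finite convex combinations $\sum_i \lambda_i u_\phi(h_i)$.

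The gap is exactly where you flag it, but your proposed fix is the wrong one. You try to force the approximating averages to run along the asymptotically abelian sequence $(g_n)$, which would require those averages to converge strongly to $P_\phi$; as you note, that holds only under extra hypotheses (e.g.\ $(g_n)$ F{\o}lner), which are \emph{not} part of the definition of asymptotic abelianness. So ``treating the general statement as reducing to this case'' is not justified.

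The actual resolution is simpler and decouples the two schemes rather than reconciling them. Fix $\xi\in E_\phi$ and $\epsilon>0$, and choose \emph{once and for all} a finite convex combination $c=\sum_{i=1}^N \lambda_i u_\phi(h_i)$ with $\|(c-P_\phi)\pi_\phi(b)\xi\|<\epsilon$. Writing $\eta:=P_\phi\pi_\phi(b)\xi$ and using your identity $\langle\eta,\pi_\phi(a)\xi\rangle=\langle\eta,\pi_\phi(\Theta_{g_n}a)\xi\rangle$ (valid since $\eta,\xi\in E_\phi$), you get for every $n$
\[
\bigl|\,2i\,\operatorname{Im}\langle\eta,\pi_\phi(a)\xi\rangle - \sum_{i=1}^N \lambda_i\,\omega_\xi\bigl([\Theta_{g_n}a,\Theta_{h_i^{-1}}b]\bigr)\bigr| \le 2\epsilon\,\|a\|\,\|\xi\|,
\]
because $\|\pi_\phi(\Theta_{g_n}a)\xi\|\le\|a\|\,\|\xi\|$ uniformly in $n$. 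Now let $n\to\infty$: for each of the finitely many fixed $h_i$, asymptotic abelianness applied with $b$ replaced by $\Theta_{h_i^{-1}}b$ gives $\|[\Theta_{g_n}a,\Theta_{h_i^{-1}}b]\|\to 0$, so the sum vanishes in the limit. Hence $|\operatorname{Im}\langle\eta,\pi_\phi(a)\xi\rangle|\le\epsilon\,\|a\|\,\|\xi\|$ for every $\epsilon>0$, and you are done. The point is that the $h_i$ are finite and fixed, while $g_n$ runs; you never need the convex approximation itself to be built from $(g_n)$.
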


\begin{proof}
	See \cite[Proposition 3.1.16]{Sakai}.
\end{proof}

\section{Unique ergodicity and gauges: the singly generated setting}\label{Singly generated UE}

So far we have spoken about C*-dynamical systems, a noncommutative analog of a topological dynamical systems. But just as classical ergodic theory is often interested in the interplay between topological dynamical systems and the measure-theoretic dynamical systems they can be realized in, we are interested in questions about the interplay between C*-dynamical systems and the non-commutative measure-theoretic dynamical systems they can be realized in. To make this more precise, we introduce the notion of a W*-dynamical system.

A \emph{W*-probability space} is a pair $(\mathfrak{M}, \rho)$ consisting of a von Neumann algebra $\mathfrak{M}$ and a faithful tracial normal state $\rho$ on $\mathfrak{M}$. An \emph{automorphism} of a W*-probability space $(\mathfrak{M}, \rho)$ is a *-automorphism $T : \mathfrak{M} \to \mathfrak{M}$ such that $\rho \circ T = \rho$, i.e. an automorphism of $\mathfrak{M}$ which preserves $\rho$. A \emph{W*-dynamical system} is a quadruple $(\mathfrak{M}, \rho, G, \Xi)$, where $(\mathfrak{M}, \rho)$ is a W*-probability space, and $\Xi : G \to \operatorname{Aut}(\mathfrak{M}, \rho)$ is a left action of a discrete topological group $G$ (called the \emph{phase group}) on $\mathfrak{M}$ by $\rho$-preserving automorphisms of $\mathfrak{M}$, i.e. such that $\rho(\Xi_g x) = \rho(x)$ for all $g \in G, x \in \mathfrak{M}$. Importantly, if $(\mathfrak{M}, \rho, G, \Xi)$ is a W*-dynamical system, then $(\mathfrak{M}, G, \Xi)$ is automatically a W*-dynamical system.

\begin{Rmk}
In the literature, the term ``W*-dynamical system" is sometimes used to refer to a more general construction, where the group $G$ is assumed to satisfy some topological conditions, and the action is assumed to be continuous in the strong operator topology, e.g. \cite{NoncommutativeJoinings}. Other authors use a yet more general definition, e.g. \cite[III.3.2]{Blackadar}. Since we are only interested in actions of discrete groups, we adopt a narrower definition.
\end{Rmk}

\begin{Def}
	Given a W*-probability space, we define $\mathcal{L}^2(\mathfrak{M}, \rho)$ to be the Hilbert space defined by completing $\mathfrak{M}$ with respect to the inner product $\left< x , y \right>_\rho = \rho \left( y^* x \right)$, i.e. the Hilbert space associated with the faithful GNS representation of $\mathfrak{M}$ induced by $\rho$.
\end{Def}

Finally, we introduce the notion of a C*-model, intending to generalize the notion of a topological model from classical ergodic theory to this noncommutative setting.

\begin{Def}
	Let $(\mathfrak{M}, \rho, G, \Xi)$ be a W*-dynamical system. A \emph{C*-model} of $(\mathfrak{M}, \rho, G, \Xi)$ is a quadruple $(\mathfrak{A}, G, \Theta; \iota)$ consisting of a C*-dynamical system $(\mathfrak{A}, G, \Theta)$ and a *-homomorphism $\iota : \mathfrak{A} \to \mathfrak{M}$ such that
	\begin{enumerate}[label=(\alph*)]
		\item $\iota(\mathfrak{A})$ is dense in the weak operator topology of $\mathfrak{M}$,
		\item $\Xi_g \left( \iota(\mathfrak{A}) \right) = \iota(\mathfrak{A})$ for all $g \in G$, and
		\item $\Xi_g \circ \iota = \iota \circ \Theta_g$ for all $g \in G$.
	\end{enumerate}
	We call the C*-model $(\mathfrak{A}, G, \Theta; \iota)$ \emph{faithful} if $\iota$ is also injective.
\end{Def}

We remark that we can turn any C*-model into a faithful C*-model through a quotienting process. If $\iota$ was not injective, then we could instead consider $\tilde{\iota} : \mathfrak{A} / \ker \iota \hookrightarrow \mathfrak{M}$. In the case where $\mathfrak{A}$ is commutative, this quotienting process corresponds (via the Gelfand-Naimark Theorem) to taking a measure-theoretic dynamical system and restricting to the support of the resident probability measure. To see this, let $\mathfrak{A} = C(X)$, where $X$ is a compact metrizable topological space, and let $\mathfrak{M} = L^\infty(X, \mu)$ for some Borel probability measure $\mu$. Let $\iota : C(X) \to L^\infty(X, \mu)$ be the (not necessarily injective) map that maps a continuous function on $X$ to its equivalence class in $L^\infty(X, \mu)$. It can be seen that $f \in \ker \iota$ if and only if the open set $\left\{ x \in X : f(x) \neq 0 \right\}$ is of measure $0$, or equivalently if $f \vert_{\operatorname{supp}(\mu)} = 0$, and in particular that $\iota$ is injective if and only if $\mu$ is \emph{strictly positive} (i.e. $\mu$ assigns positive measure to all nonempty open sets). As such, we can identify $C(X) / \ker \iota$ with $C(\operatorname{supp}(\mu))$. Let $Y = \operatorname{supp}(\mu)$ denote the support of $\mu$ on $X$, and let $\pi : C(X) \twoheadrightarrow C(Y)$ be the quotient map (which corresponds to a restriction from $X$ to $Y$, i.e. $\pi f = f \vert_Y$). Then algebraically, we have a commutative diagram
$$
\begin{tikzcd}
	C(X) \arrow[r, two heads, "\pi"] \arrow[rd, "\iota"]	& C(Y) \arrow[d, hook, dotted, "\tilde{\iota}"] \\
	& L^\infty(X, \mu)
\end{tikzcd}
$$
So in the commutative case, we can make $\iota : C(X) \to L^\infty(X, \mu)$ injective by looking at $\tilde{\iota} : C(Y) \to L^\infty(Y, \mu) \cong L^\infty(X, \mu)$, i.e. by using the support $Y$ to model $(Y, \mu) \cong (X, \mu)$.

Importantly, so long as $\mathcal{L}^2(\mathfrak{M}, \rho)$ is separable, any W*-dynamical system $(\mathfrak{M}, \rho, G, \Xi)$ will admit a faithful separable C*-model. To construct such a C*-model, it suffices to take some separable C*-subalgebra $\mathfrak{B} \subseteq \mathfrak{M}$ which is dense in $\mathfrak{M}$ with respect to the weak operator topology, then let $\mathfrak{A}$ be the norm-closure of the span of $\bigcup_{g \in G} \left( \Xi_g \mathfrak{B} \right)$. We then define $\Theta_g = \Xi_g \vert_{\mathfrak{A}}$ and let $\iota : \mathfrak{A} \hookrightarrow \mathfrak{M}$ be the inclusion map.

One last important concept in this section and the next will be unique ergodicity. A C*-dynamical system $(\mathfrak{A}, G, \Theta)$ is called \emph{uniquely ergodic} if $\mathcal{S}^G$ is a singleton. As in the commutative setting, unique ergodicity can be equivalently characterized in terms of convergence properties of ergodic averages. To our knowledge, the strongest such characterization of unique ergodicity for singly generated C*-dynamical systems can be found in \cite[Theorem 3.2]{AbadieDykema}, which describes unique ergodicity relative to the fixed point subalgebra. This characterization was then generalized to characterize unique ergodicity relative to the fixed point subalgebra for C*-dynamical systems over amenable phase groups in \cite[Theorem 5.2]{DuvenhageStroeh}; however, in Corollary \ref{Unique ergodicity for C*-dynamical systems in terms of gauge}, we provide a characterization of uniquely ergodic C*-dynamical systems in terms of ergodic averages that is not encompassed by \cite[Theorem 5.2]{DuvenhageStroeh}.

Given a C*-dynamical system $(\mathfrak{A}, \mathbb{Z}, \Theta)$, let $a \in \mathfrak{A}$ be a positive element. We define the \emph{gauge} of $a$ to be
$$\Gamma(a) : = \lim_{k \to \infty} \frac{1}{k} \left\| \sum_{j = 0}^{k - 1} \Theta_j a \right\| .$$
To prove this limit exists, it suffices to observe that the sequence $\left( \left\|\sum_{j = 0}^{k - 1} \Theta_j a \right\| \right)_{k = 1}^\infty$ is subadditive, since
\begin{align*}
	\left\| \sum_{j = 0}^{k + \ell - 1} \Theta_j a \right\|	& \leq \left\| \sum_{j = 0}^{k - 1} \Theta_j a \right\| + \left\| \sum_{j = k}^{k + \ell - 1} \Theta_j a \right\| \\
	& = \left\| \sum_{j = 0}^{k - 1} \Theta_j a \right\| + \left\| \Theta_k \sum_{j = 0}^{\ell - 1} \Theta_j a \right\| \\
	& = \left\| \sum_{j = 0}^{k - 1} \Theta_j a \right\| + \left\| \sum_{j = 0}^{\ell - 1} \Theta_j a \right\| .
\end{align*}
Therefore, by the Subadditivity Lemma, the sequence $\left( \frac{1}{k} \left\|\sum_{j = 0}^{k - 1} \Theta_j a \right\| \right)_{k = 1}^\infty$ converges, and we have the equality
$$\lim_{k \to \infty} \frac{1}{k} \left\| \sum_{j = 0}^{k - 1} \Theta_j a \right\| = \inf_{k \in \mathbb{N}} \frac{1}{k} \left\| \sum_{j = 0}^{k - 1} \Theta_j a \right\| .$$

We have the following characterization of $\Gamma$ in the language of ergodic optimization.

\begin{Thm}\label{Singly generated gauge}
Let $(\mathfrak{A}, \mathbb{Z}, \Theta)$ be a C*-dynamical system. Then if $a \in \mathfrak{A}$ is a positive element, then $\Gamma(a) = m \left( a \vert \mathcal{S}^G \right)$.
\end{Thm}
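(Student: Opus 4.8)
The plan is to prove the two inequalities $\Gamma(a) \leq m(a \mid \mathcal{S}^G)$ and $\Gamma(a) \geq m(a \mid \mathcal{S}^G)$ separately, using the infimum characterization of $\Gamma$ established just above the statement. Throughout, $a \geq 0$ is positive, so $\operatorname{Avg}_{\{0,\dots,k-1\}} a = \frac{1}{k}\sum_{j=0}^{k-1}\Theta_j a$ is positive as well, and its norm equals its value under a suitable state.

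For the inequality $m(a \mid \mathcal{S}^G) \leq \Gamma(a)$, I would start with any invariant state $\psi \in \mathcal{S}^G$. By invariance, $\psi(\Theta_j a) = \psi(a)$ for every $j$, so for each $k$ we have
\begin{align*}
	\psi(a)	& = \frac{1}{k}\sum_{j=0}^{k-1}\psi(\Theta_j a) \\
	& = \psi\!\left( \frac{1}{k}\sum_{j=0}^{k-1}\Theta_j a \right) \\
	& \leq \left\| \frac{1}{k}\sum_{j=0}^{k-1}\Theta_j a \right\| ,
\end{align*}
where the last step uses that a state has norm $1$. Taking the infimum over $k$ and invoking the identity $\Gamma(a) = \inf_k \frac{1}{k}\|\sum_{j=0}^{k-1}\Theta_j a\|$ gives $\psi(a) \leq \Gamma(a)$, and since $\psi \in \mathcal{S}^G$ was arbitrary, $m(a \mid \mathcal{S}^G) \leq \Gamma(a)$.

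The reverse inequality $\Gamma(a) \leq m(a \mid \mathcal{S}^G)$ is the harder half, and I expect it to be the main obstacle since it requires \emph{producing} an invariant state that witnesses the gauge. The idea is to choose, for each $k$, a state $\phi_k \in \mathcal{S}$ with $\phi_k\!\left(\frac{1}{k}\sum_{j=0}^{k-1}\Theta_j a\right) = \left\|\frac{1}{k}\sum_{j=0}^{k-1}\Theta_j a\right\|$; such a norming state exists for the positive element $\frac{1}{k}\sum_{j=0}^{k-1}\Theta_j a$ because the norm of a positive element of a unital C*-algebra is attained by some state (e.g. via Hahn–Banach applied on the abelian C*-subalgebra it generates, then extended to a state). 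Observe that $\phi_k\!\left(\operatorname{Avg}_{\{0,\dots,k-1\}} a\right)$ is exactly $\phi_k \circ \operatorname{Avg}_{F_k}(a)$ for the \Folner{} sequence $F_k = \{0, 1, \dots, k-1\}$ in $\mathbb{Z}$. By weak*-compactness of $\mathcal{S}$, the sequence $(\phi_k \circ \operatorname{Avg}_{F_k})_{k=1}^\infty$ has a weak*-convergent subsequence, and by Lemma \ref{K-B} its limit $\psi$ is $\Theta$-invariant, i.e. $\psi \in \mathcal{S}^G$. Along that subsequence, $\psi(a) = \lim \phi_{k}\!\left(\operatorname{Avg}_{F_{k}} a\right) = \lim \frac{1}{k}\|\sum_{j=0}^{k-1}\Theta_j a\| = \Gamma(a)$, where the last equality uses convergence of the full sequence defining $\Gamma(a)$. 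Hence $\Gamma(a) = \psi(a) \leq m(a \mid \mathcal{S}^G)$, and combining the two inequalities finishes the proof. The one subtlety to handle carefully is confirming that the norming functionals $\phi_k$ can be taken to be genuine states (positive, unital) rather than merely norm-one functionals, which is where positivity of $a$ is essential.
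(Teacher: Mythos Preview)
Your proposal is correct and matches the paper's proof essentially line for line: the paper also picks norming states $\sigma_k$ for the averages, forms $\omega_k = \sigma_k \circ \operatorname{Avg}_{F_k}$, passes to a weak*-limit point, and invokes Lemma \ref{K-B} for invariance, while the inequality $m(a \mid \mathcal{S}^\mathbb{Z}) \leq \Gamma(a)$ is handled exactly as you describe via invariance and the infimum characterization of $\Gamma$. Your closing remark about needing genuine states (not just norm-one functionals) is the only point the paper leaves implicit, and you are right that positivity of $a$ is what makes this work.
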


\begin{proof}
	For each $k \in \mathbb{N}$, choose a state $\sigma_k$ on $\mathfrak{A}$ such that
	$$\sigma_k \left( \frac{1}{k} \sum_{j = 0}^{k - 1} \Theta_j a \right) = \left\| \frac{1}{k} \sum_{j = 0}^{k - 1} \Theta_j a \right\| .$$
	Let $\omega_k = \frac{1}{k} \sum_{j = 0}^{k - 1} \sigma_k \circ \Theta_j$, so
	\begin{align*}
		\omega_k(x)	& = \frac{1}{k} \sum_{j = 0}^{k - 1} \sigma_k \left( \Theta_j x \right)  \\
		& = \sigma_k \left( \frac{1}{k} \sum_{j = 0}^{k - 1} \Theta_j x \right) , \\
		\omega_k(a)	& = \sigma_k \left( \frac{1}{k} \sum_{j = 0}^{k - 1} \Theta_j a \right) \\
		& = \left\| \frac{1}{k} \sum_{j = 0}^{k - 1} \Theta_j a \right\| .
	\end{align*}
	Let $\omega \in \mathcal{S}$ be a weak*-limit point of $\left( \omega_k : k \in \mathbb{N} \right)$, and let $k_1 < k_2 < \cdots$ be a subsequence such that $\omega_{k_n} \stackrel{n \to \infty}{\to} \omega$ in the weak*-topology. By Lemma \ref{K-B}, we know that $\omega$ is $\Theta$-invariant. Therefore $\omega(a) = \Gamma(a)$, and $\omega$ is a $\Theta$-invariant state on $\mathfrak{A}$, so $$\Gamma(a) = \omega(a) \leq m \left( a \vert \mathcal{S}^\mathbb{Z} \right) .$$
	
	Now, we prove the opposite inequality. Let $\phi \in \mathcal{S}^\mathbb{Z}$. Then
	\begin{align*}
		\phi(a)	& = \phi \left( \operatorname{Avg}_k a \right) \\
		& \leq \left\| \operatorname{Avg}_k a \right\| \\
		& = \frac{1}{k} \left\| \sum_{j = 0}^{k - 1} \Theta_j a \right\| \\
		& = \frac{1}{k} \left\| \sum_{j = 0}^{k - 1} \Theta_j a \right\|	& (\forall k \in \mathbb{N}) \\
		\Rightarrow \phi(a)	& \leq \inf_{k \in \mathbb{N}} \frac{1}{k} \left\| \sum_{j = 0}^{k - 1} \Theta_j a \right\| \\
		& = \Gamma(a) \\
		\Rightarrow \sup_{\psi \in \mathcal{S}^\mathbb{Z}} \psi(a)	& \leq \Gamma(a) .
	\end{align*}
	Therefore
	$$m \left( a \vert \mathcal{S}^\mathbb{Z} \right) = \sup_{\psi \in \mathcal{S}^\mathbb{Z}} \psi(a) \leq \Gamma(a) .$$
	
	This establishes the identity.
\end{proof}

\begin{Cor}\label{Gamma estimate}
	Let $(\mathfrak{M}, \rho, \mathbb{Z}, \Xi)$ be a W*-dynamical system, and let $(\mathfrak{A}, \mathbb{Z}, \Theta; \iota)$ be a C*-model of $(\mathfrak{M}, \rho, \mathbb{Z}, \Xi)$. If $a \in \mathfrak{A}$ is a positive element, then
	$$\Gamma(\iota(a)) = m \left( a \vert \operatorname{Ann}(\ker \iota) \right) .$$
\end{Cor}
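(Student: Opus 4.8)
The plan is to realize $\Gamma(\iota(a))$ as a gauge computed \emph{internally} in the quotient system $\left( \mathfrak{A}/\ker\iota, \mathbb{Z}, \tilde{\Theta} \right)$, and then to chain together the two identities already proved in this section: Theorem \ref{Singly generated gauge}, which identifies the gauge of a positive element with an ergodic optimization over the invariant states, and Theorem \ref{Ergodic optimization through *-homomorphisms}, which transports such optimizations across a surjective equivariant *-homomorphism.

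First I would record the structural setup. The relation $\Xi_g \circ \iota = \iota \circ \Theta_g$ forces $\ker\iota$ to be a $\Theta$-invariant closed two-sided ideal, and it is proper because $\iota$ is nonzero (its image is weak-operator dense in $\mathfrak{M} \neq 0$). Let $\pi : \mathfrak{A} \twoheadrightarrow \mathfrak{A}/\ker\iota$ be the quotient map and $\tilde{\Theta}$ the induced $\mathbb{Z}$-action, so that $\left( \mathfrak{A}/\ker\iota, \mathbb{Z}, \tilde{\Theta} \right)$ is a unital C*-dynamical system and $\pi$ is a surjective equivariant *-homomorphism. Factoring $\iota = \tilde{\iota} \circ \pi$ through the quotient produces an \emph{injective} *-homomorphism $\tilde{\iota} : \mathfrak{A}/\ker\iota \hookrightarrow \mathfrak{M}$, which is therefore isometric; in particular $\| \iota(b) \|_{\mathfrak{M}} = \| \pi(b) \|_{\mathfrak{A}/\ker\iota}$ for every $b \in \mathfrak{A}$. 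The crux is the gauge identity $\Gamma(\iota(a)) = \Gamma(\pi(a))$, where the left-hand gauge is computed in $\left( \mathfrak{M}, \mathbb{Z}, \Xi \right)$ and the right-hand one in $\left( \mathfrak{A}/\ker\iota, \mathbb{Z}, \tilde{\Theta} \right)$. Using equivariance to pull the Cesàro sum through $\iota$ and then the isometry to pass to the quotient norm, one has for every $k$
$$\left\| \sum_{j = 0}^{k - 1} \Xi_j \iota(a) \right\|_{\mathfrak{M}} = \left\| \iota \left( \sum_{j = 0}^{k - 1} \Theta_j a \right) \right\|_{\mathfrak{M}} = \left\| \sum_{j = 0}^{k - 1} \tilde{\Theta}_j \pi(a) \right\|_{\mathfrak{A}/\ker\iota} ,$$
and dividing by $k$ and letting $k \to \infty$ gives the claimed equality of gauges.

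With this in hand the proof closes formally. Since $\pi(a)$ is a positive element of $\mathfrak{A}/\ker\iota$, Theorem \ref{Singly generated gauge} applied to the quotient system yields $\Gamma(\pi(a)) = m\left( \pi(a) \vert \tilde{\mathcal{S}}^{\mathbb{Z}} \right)$, where $\tilde{\mathcal{S}}^{\mathbb{Z}}$ is the space of $\tilde{\Theta}$-invariant states on $\mathfrak{A}/\ker\iota$; and Theorem \ref{Ergodic optimization through *-homomorphisms} applied to the surjective equivariant $\pi$ gives $m\left( \pi(a) \vert \tilde{\mathcal{S}}^{\mathbb{Z}} \right) = m\left( a \vert \operatorname{Ann}(\ker\pi) \right)$, so that $\ker\pi = \ker\iota$ completes the chain. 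The only genuinely delicate point is the gauge identity $\Gamma(\iota(a)) = \Gamma(\pi(a))$: everything hinges on the observation that the external gauge in $\mathfrak{M}$ sees only the norms of the partial sums $\iota\left( \sum_j \Theta_j a \right)$, and that these coincide with the quotient norms because an injective *-homomorphism of C*-algebras is automatically isometric. Once that identification is secured, the remaining steps are a mechanical composition of the two cited theorems, requiring only the routine verifications that $\ker\iota$ is a proper $\Theta$-invariant ideal and that $\pi(a) \geq 0$.
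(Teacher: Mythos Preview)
Your proposal is correct and follows essentially the same approach as the paper: both arguments chain Theorem \ref{Singly generated gauge} and Theorem \ref{Ergodic optimization through *-homomorphisms}, after first observing that the gauge of $\iota(a)$ in $\mathfrak{M}$ coincides with the gauge computed in the smaller system carried by $\iota(\mathfrak{A})$. The only cosmetic difference is that the paper works directly with the image $\tilde{\mathfrak{A}} = \iota(\mathfrak{A}) \subseteq \mathfrak{M}$ (so that $\iota : \mathfrak{A} \twoheadrightarrow \tilde{\mathfrak{A}}$ is the surjective equivariant map fed into Theorem \ref{Ergodic optimization through *-homomorphisms}), whereas you pass through the abstract quotient $\mathfrak{A}/\ker\iota$ and the factorization $\iota = \tilde{\iota} \circ \pi$; since $\tilde{\iota}$ is an equivariant *-isomorphism onto $\tilde{\mathfrak{A}}$, the two routes are interchangeable.
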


\begin{proof}
	Write $\tilde{\mathfrak{A}} = \iota(\mathfrak{A}) \subseteq \mathfrak{M}$, and let $\tilde{\Theta} : \mathbb{Z} \to \operatorname{Aut} \left( \tilde{\mathfrak{A}} \right)$ be the action $\tilde{ \Theta }_n = \Xi_n \vert_{ \tilde{\mathfrak{A}} }$ obtained by restricting $\Xi$ to $\tilde{\mathfrak{A}}$. Write $\tilde{\mathcal{S}}^\mathbb{Z}$ for the space of $\tilde{\Theta}$-invariant states on $\tilde{\mathfrak{A}}$.
	
	We can write $\Gamma_{\mathfrak{M}}(\iota(a)) = \Gamma_{\tilde{\mathfrak{A}}}(\iota(a))$. By Theorem \ref{Singly generated gauge}, we know that $\Gamma_{ \tilde{\mathfrak{A}} }(\iota(a)) = m \left( \iota(a) \vert \tilde{\mathcal{S}}^\mathbb{Z} \right)$, and by Theorem \ref{Ergodic optimization through *-homomorphisms}, we know that $m \left( \iota(a) \vert \tilde{\mathcal{S}}^\mathbb{Z} \right) = m \left( a \vert \operatorname{Ann}(\ker \iota) \right)$.
\end{proof}

\begin{Rmk}
	Corollary \ref{Gamma estimate} can be regarded as an operator-algebraic extension of Lemma 2.3 from \cite{Assani-Young}. The assumption that $(\mathfrak{A}, G, \Theta; \iota)$ is faithful can be understood as analogous to the assumption of strict positivity in that paper.
\end{Rmk}

This $\Gamma$ value provides an alternative characterization of unique ergodicity, at least under some additional Choquet-theoretic hypotheses.

\begin{Thm}
	Let $(\mathfrak{M}, \rho, \mathbb{Z}, \Xi)$ be a W*-dynamical system, and let $(\mathfrak{A}, \mathbb{Z}, \Theta; \iota)$ be a faithful C*-model of $(\mathfrak{M}, \rho, \mathbb{Z}, \Xi)$. Then the following conditions are related by the implications (i)$\iff$(ii)$\Rightarrow$(iii).
	\begin{enumerate}[label=(\roman*)]
		\item The C*-dynamical system $(\mathfrak{A}, \mathbb{Z}, \Theta)$ is uniquely ergodic.
		\item The C*-dynamical system $(\mathfrak{A}, \mathbb{Z}, \Theta)$ is strictly ergodic.
		\item $\Gamma(\iota(a)) = \rho(\iota(a))$ for all positive $a \in \mathfrak{A}$.
	\end{enumerate}
	Further, if $\mathcal{S}^\mathbb{Z}$ is a simplex, then (iii)$\Rightarrow$(i).
\end{Thm}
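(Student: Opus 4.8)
The plan is to reduce everything to the identity $\Gamma(\iota(a)) = m\left(a \vert \operatorname{Ann}(\ker \iota)\right)$ of Corollary \ref{Gamma estimate} and to exploit the faithfulness of the model. I would first record a few elementary facts. Because $\iota(\mathfrak{A})$ is dense in the weak operator topology and $\iota$ is a $*$-homomorphism, the projection $\iota(1)$ must equal $1$, so $\iota$ is unital; hence $\phi_0 := \rho \circ \iota$ is a state on $\mathfrak{A}$, and it is $\Theta$-invariant since $\phi_0 \circ \Theta_g = \rho \circ \Xi_g \circ \iota = \rho \circ \iota = \phi_0$ by equivariance and $\rho$-invariance, so $\phi_0 \in \mathcal{S}^{\mathbb{Z}}$. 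Moreover $\phi_0$ is \emph{faithful}: if $a \geq 0$ and $\rho(\iota(a)) = 0$, then $\iota(a) = 0$ because $\iota(a) \geq 0$ and $\rho$ is faithful, and then $a = 0$ because $\iota$ is injective. Finally, faithfulness of the model gives $\ker \iota = \{0\}$, whence $\operatorname{Ann}(\ker \iota) = \operatorname{Ann}(\{0\}) = \mathcal{S}^{\mathbb{Z}}$ and Corollary \ref{Gamma estimate} becomes $\Gamma(\iota(a)) = m\left(a \vert \mathcal{S}^{\mathbb{Z}}\right) = \sup_{\psi \in \mathcal{S}^{\mathbb{Z}}} \psi(a)$ for every positive $a \in \mathfrak{A}$.

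For (i) $\iff$ (ii), the implication (ii) $\Rightarrow$ (i) should be immediate from the definition of strict ergodicity, while for (i) $\Rightarrow$ (ii) the crucial point is that $\phi_0 \in \mathcal{S}^{\mathbb{Z}}$: unique ergodicity forces $\mathcal{S}^{\mathbb{Z}} = \{\phi_0\}$, so the unique invariant state is the \emph{faithful} state $\phi_0$. Whichever of the standard readings of strict ergodicity is in force --- faithfulness of the unique invariant state, or the absence of nontrivial $\Theta$-invariant closed ideals --- the additional content follows from this faithfulness (the latter via Proposition \ref{Nonempty annihilators}, since an invariant state vanishing on a nonzero invariant ideal would contradict faithfulness of $\phi_0$). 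For (ii) $\Rightarrow$ (iii) I would pass to (i): unique ergodicity gives $m\left(a \vert \mathcal{S}^{\mathbb{Z}}\right) = \phi_0(a) = \rho(\iota(a))$, which combined with the identity recorded above is exactly (iii).

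For (iii) $\Rightarrow$ (i), that same identity turns (iii) into the statement that $\sup_{\psi \in \mathcal{S}^{\mathbb{Z}}} \psi(a) = \phi_0(a)$ for every positive $a$. Thus for any $\psi \in \mathcal{S}^{\mathbb{Z}}$ and any positive $a$ we have $\psi(a) \leq \phi_0(a)$; applying this inequality to the positive element $\|a\| 1 - a$ and using $\psi(1) = \phi_0(1) = 1$ yields the reverse bound $\psi(a) \geq \phi_0(a)$, so $\psi(a) = \phi_0(a)$ for all positive $a$, and hence $\psi = \phi_0$ since positive elements span $\mathfrak{A}$. Therefore $\mathcal{S}^{\mathbb{Z}} = \{\phi_0\}$, which is (i).

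I should flag that this direct argument does not appear to use the hypothesis that $\mathcal{S}^{\mathbb{Z}}$ is a simplex; since the theorem restricts (iii) $\Rightarrow$ (i) to that case, I would double-check whether some step covertly needs it (it does not seem to, as Corollary \ref{Gamma estimate} rests only on Theorems \ref{Singly generated gauge} and \ref{Ergodic optimization through *-homomorphisms}), or whether the hypothesis is simply carried over from a Choquet-theoretic route --- for instance one invoking Corollary \ref{Uniquely maximizing states} to force every extreme point of $\mathcal{S}^{\mathbb{Z}}$ to equal $\phi_0$. The genuine subtlety, and the part I would be most careful about, is (i) $\iff$ (ii): it hinges entirely on correctly matching the ambient definition of strict ergodicity to the faithfulness of $\phi_0$ supplied by the faithful model.
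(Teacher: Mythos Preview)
Your argument is correct, and for (i)$\iff$(ii)$\Rightarrow$(iii) it is essentially identical to the paper's: both hinge on the faithfulness of $\phi_0 = \rho \circ \iota$ (giving strict ergodicity from unique ergodicity) and on the identity $\Gamma(\iota(a)) = m\!\left(a \mid \mathcal{S}^{\mathbb{Z}}\right)$ from Corollary~\ref{Gamma estimate} applied with $\ker\iota = \{0\}$.

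For (iii)$\Rightarrow$(i), however, you take a genuinely different and more elementary route than the paper. The paper argues the contrapositive via Choquet theory: assuming $\mathcal{S}^{\mathbb{Z}}$ is a simplex and not a singleton, it picks an extreme point $\phi \neq \phi_0$, invokes Corollary~\ref{Uniquely maximizing states} to produce a positive $a$ with $\{\phi\} = \mathcal{S}^{\mathbb{Z}}_{\mathrm{max}}(a)$, and concludes $\Gamma(\iota(a)) = \phi(a) > \phi_0(a)$. Your approach instead reads (iii) as $\sup_{\psi \in \mathcal{S}^{\mathbb{Z}}} \psi(a) = \phi_0(a)$ for every positive $a$, then uses the elementary trick of substituting $\|a\|1 - a$ to turn the one-sided inequality $\psi(a) \leq \phi_0(a)$ into an equality for all positive $a$, forcing $\psi = \phi_0$. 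Your observation is right: this argument does not use the simplex hypothesis at all, so you have in fact established the unconditional equivalence (i)$\iff$(ii)$\iff$(iii). The paper's Choquet route buys a template that generalizes to settings where one cannot so directly compare states elementwise, but for this particular statement your argument is both simpler and sharper.
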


\begin{proof}
	(i)$\Rightarrow$(ii) Suppose that $(\mathfrak{A}, \mathbb{Z}, \Theta)$ is uniquely ergodic. Then $\rho \circ \iota$ is an invariant state on $\mathfrak{A}$, so it follows that $\rho \circ \iota$ is the unique invariant state on $\mathfrak{A}$. But $\rho \circ \iota$ is also a faithful state on $\mathfrak{A}$, so it follows that $(\mathfrak{A}, \mathbb{Z}, \Theta)$ is strictly ergodic.
	
	(ii)$\Rightarrow$(i) Trivial.
	
	(i)$\Rightarrow$(iii) Suppose that $(\mathfrak{A}, \mathbb{Z}, \Theta)$ is uniquely ergodic, and let $a \in \mathfrak{A}$ be positive. Let $\phi$ be a $\mathcal{S}^\mathbb{Z}$-maximizing state for $a$. Then $\phi = \rho \circ \iota$, since both $\phi$ and $\rho \circ \iota$ are invariant states on $\mathfrak{A}$, and $(\mathfrak{A}, \mathbb{Z}, \Theta)$ is uniquely ergodic. Thus $\phi = \rho \circ \iota$, so $\Gamma(\iota(a)) = \phi(a) = \rho(\iota(a))$.
	
	(iii)$\Rightarrow$(i) Suppose that $\mathcal{S}^\mathbb{Z}$ is a simplex, but that $(\mathfrak{A}, \mathbb{Z}, \Theta)$ is \emph{not} uniquely ergodic. By the Krein-Milman Theorem, there exists two distinct extreme points of $\mathcal{S}^\mathbb{Z}$, and in particular there exists an extreme point $\phi \in \mathcal{S}^\mathbb{Z}$ of $\mathcal{S}^\mathbb{Z}$ distinct from $\rho \circ \iota$. Then by Corollary \ref{Uniquely maximizing states}, there exists $a \in \mathfrak{A}$ self-adjoint such that $\{ \phi \} = \mathcal{S}_\mathrm{max}^\mathbb{Z}(a)$. We can assume that $a$ is positive, since otherwise we could replace $a$ with $a + r$ for a sufficiently large positive real number $r > 0$, and $\mathcal{S}_\mathrm{max}^\mathbb{Z}(a) = \mathcal{S}_\mathrm{max}^\mathbb{Z}(a + r)$. Then $\Gamma(\iota(a)) = \phi(a)$. But by the assumption that $\phi$ is uniquely $\left( a \vert \mathcal{S}^\mathbb{Z} \right)$-maximizing, it follows that $\rho(\iota(a)) < \phi(a)$. Therefore $\Gamma(\iota(a)) \neq \rho(\iota(a))$, meaning that (iii) does not attain. Thus $\neg$(i)$\Rightarrow \neg$(iii).
\end{proof}

\section{Unique ergodicity and gauge: the amenable setting}\label{Amenable UE}

For the duration of this section, we assume that $(\mathfrak{M}, \rho, G, \Xi)$ is a W*-dynamical system with $\mathcal{L}^2(\mathfrak{M}, \rho)$ separable. Assume further that $(\mathfrak{A}, G, \Theta)$ is a C*-dynamical system such that $\mathfrak{A}$ is separable, and that $G$ is amenable. It follows from Corollary \ref{Tracial K-B} that $\mathcal{S}^G \neq \emptyset$.

In this section, we expand upon some of the ideas presented in Section \ref{Singly generated UE}, generalizing from the case of actions of $\mathbb{Z}$ to actions of a countable discrete amenable group $G$. We separate these two sections because our treatment of the more general amenable setting has some additional nuances to it.

Our first result of this section is a generalization of a classical result from ergodic theory regarding unique ergodicity, which is that a (singly generated) topological dynamical system is uniquely ergodic if and only if the averages of the continuous functions converge to a constant. This classical result is well-known, and can be found in many standard texts on ergodic theory, e.g. \cite[Thm 6.2.1]{DajaniDirksin}, \cite[Thm 10.6]{EisnerOperators}, \cite[Thm 5.17]{Walters}, but the earliest example of a result like this that we could find was \cite[5.3]{OxtobyErgodic}. Theorem \ref{Unique ergodicity equivalent statements} generalizes this classical result not only to the noncommutative setting, but to the setting where the phase group $G$ is amenable.

We define the \emph{weak topology} on a C*-algebra $\mathfrak{A}$ to be the topology generated by the states on $\mathfrak{A}$, i.e.
\begin{align*}
	x	& \mapsto \psi(x)	& (\psi \in \mathcal{S}) .
\end{align*}
In other words, the weak topology is the topology in which a net $(x_i)_{i \in \mathscr{I}}$ converges to $x$ if and only if $(\psi(x_i) )_{i \in \mathscr{I}}$ converges to $\psi(x)$ for every state $\psi$ on $\mathfrak{A}$. We say the net $(x_i)_{i \in \mathscr{I}}$ \emph{converges weakly} to $x$ if it converges in the weak topology.

\begin{Thm}\label{Unique ergodicity equivalent statements}
	Let $(\mathfrak{A}, G, \Theta)$ be a C*-dynamical system. Then the following conditions are equivalent.
	\begin{enumerate}[label=(\roman*)]
		\item $(\mathfrak{A}, G, \Theta)$ is uniquely ergodic.
		\item There exists a right Følner sequences $(F_k)_{k = 1}^\infty$ for $G$ and a linear functional $\phi : \mathfrak{A} \to \mathbb{C}$ such that for all $x \in \mathfrak{A}$, the sequence $\left( \operatorname{Avg}_{F_k} x \right)_{k = 1}^\infty$ converges in norm to $\phi(x) 1 \in \mathbb{C} 1$.
		\item There exists a left Følner sequences $(F_k)_{k = 1}^\infty$ for $G$ and a linear functional $\phi : \mathfrak{A} \to \mathbb{C}$ such that for all $x \in \mathfrak{A}$, the sequence $\left( \operatorname{Avg}_{F_k} x \right)_{k = 1}^\infty$ converges weakly to $\phi(x) 1 \in \mathbb{C} 1$.
		\item There exists a state $\phi$ on $\mathfrak{A}$ such that for every right Følner sequence $(F_k)_{k = 1}^\infty$ for $G$, the sequence $\left( \operatorname{Avg}_{F_k} x \right)_{k = 1}^\infty$ converges in norm to $\phi(x) 1 \in \mathbb{C} 1$.
		\item There exists a state $\phi$ on $\mathfrak{A}$ such that for every left Følner sequence $(F_k)_{k = 1}^\infty$ for $G$, the sequence $\left( \operatorname{Avg}_{F_k} x \right)_{k = 1}^\infty$ converges weakly to $\phi(x) 1 \in \mathbb{C} 1$.
	\end{enumerate}
\end{Thm}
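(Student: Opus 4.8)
The plan is to arrange the five conditions into two cycles meeting at node (i), exploiting the fact that norm convergence naturally pairs with right-Følner averaging while weak convergence pairs with left-Følner averaging. Throughout write $\phi$ for the relevant invariant state. The implications (iv)$\Rightarrow$(ii) and (v)$\Rightarrow$(iii) are immediate: convergence along \emph{every} Følner sequence yields convergence along \emph{some} one (such sequences exist since $G$ is amenable), and a state is a linear functional. I would dispatch (ii)$\Rightarrow$(i) and (iii)$\Rightarrow$(i) uniformly, without even using the Følner property: if $\operatorname{Avg}_{F_k}x \to \phi(x)1$ (in norm, or merely weakly), then positivity and unitality pass to the limit, so $\phi$ is forced to be a state, and for any invariant $\mu$ one computes $\mu(x) = \mu(\operatorname{Avg}_{F_k}x) \to \phi(x)$, whence $\mu = \phi$; since $\mathcal{S}^G \neq \emptyset$ (Lemma \ref{K-B} applied to $K = \mathcal{S}$), this shows $\mathcal{S}^G = \{\phi\}$. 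It therefore remains to establish the two ``forward'' implications (i)$\Rightarrow$(iv) and (i)$\Rightarrow$(v), the only places where the left/right distinction genuinely enters.

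For (i)$\Rightarrow$(iv), let $\phi$ be the unique invariant state and $(F_k)$ a right Følner sequence. Reducing to self-adjoint $x$ and using $\|y\| = \sup_{\sigma \in \mathcal{S}} |\sigma(y)|$ for self-adjoint $y$, I would argue by contradiction: if $\|\operatorname{Avg}_{F_k}x - \phi(x)1\|$ does not vanish, pick states $\sigma_j$ and indices $k_j$ witnessing $|\sigma_j(\operatorname{Avg}_{F_{k_j}}x) - \phi(x)| \geq \epsilon$. The averaged states $\psi_j = \sigma_j \circ \operatorname{Avg}_{F_{k_j}}$ lie in the weak*-compact metrizable set $\mathcal{S}$ (Lemma \ref{Simplices}), so a subsequence converges to some $\psi$; by Lemma \ref{K-B}—precisely where the right Følner hypothesis is consumed—$\psi$ is invariant, hence $\psi = \phi$, contradicting $|\psi_j(x) - \phi(x)| \geq \epsilon$.

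The implication (i)$\Rightarrow$(v) is the crux, because for a \emph{left} Følner sequence the Krylov--Bogolyubov computation of Lemma \ref{K-B} is unavailable: a left Følner sequence only supplies the one-sided asymptotic invariance $\|\Theta_{g_0}(\operatorname{Avg}_{F_k}x) - \operatorname{Avg}_{F_k}x\| \to 0$, which controls the \emph{element} $\operatorname{Avg}_{F_k}x$ but not the averaged \emph{state} $\sigma \circ \operatorname{Avg}_{F_k}$, and the latter can have non-invariant limit points. My plan is to pass to the bidual $\mathfrak{A}^{**}$ with the weak*-continuous extended action $\Theta^{**}$ and isolate the following lemma: if $(\mathfrak{A}, G, \Theta)$ is uniquely ergodic with invariant state $\phi$, then every $\Theta^{**}$-fixed $z \in \mathfrak{A}^{**}$ equals $\hat{\phi}(z)1$. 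Granting this, weak convergence follows cleanly: the bounded sequence $y_k := \operatorname{Avg}_{F_k}x$ has weak* cluster points in $\mathfrak{A}^{**}$, each such $z$ is $\Theta^{**}$-fixed by the one-sided asymptotic invariance and satisfies $\hat{\phi}(z) = \lim \phi(y_k) = \phi(x)$, so the lemma forces every cluster point to equal $\phi(x)1$; hence $y_k \to \phi(x)1$ weak* in $\mathfrak{A}^{**}$, i.e. weakly in $\mathfrak{A}$, which is exactly (v).

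To prove the fixed-point lemma I would take an arbitrary state $\psi'$ and a (right-)invariant mean $\mathfrak{m}$ on $G$, and form $\Phi(a) = \mathfrak{m}_g(\psi'(\Theta_g a))$; this $\Phi$ is an invariant state, hence $\Phi = \phi$ by unique ergodicity. For a $\Theta^{**}$-fixed $z$ the orbit pairing $g \mapsto \langle z, \psi' \circ \Theta_g \rangle = \hat{\psi'}(\Theta_g^{**}z) = \hat{\psi'}(z)$ is constant, so, since $\langle z, -\rangle$ is weak*-continuous and affine and $\Phi$ lies in the weak*-closed convex hull of $\{\psi' \circ \Theta_g\}$, one gets $\hat{\phi}(z) = \langle z, \Phi \rangle = \hat{\psi'}(z)$; as $\psi'$ ranges over all states and states separate $\mathfrak{A}^{**}$, this yields $z = \hat{\phi}(z)1$. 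I expect this lemma—essentially the assertion that unique ergodicity collapses the bidual fixed-point space to the scalars—to be the main obstacle, both because it must substitute for the unavailable invariance of left-Følner state averages and because it demands care in moving between $\mathfrak{A}$, its dual, and its bidual; the remaining bookkeeping (self-adjoint reductions, verifying $\phi$ is a state, metrizability) is routine.
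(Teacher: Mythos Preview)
Your argument is correct and takes a genuinely different route from the paper. The paper organizes the proof as (i)$\Rightarrow$(iv)$\Rightarrow$(ii)$\Rightarrow$(iii)$\Rightarrow$(i) together with (iv)$\Rightarrow$(v)$\Rightarrow$(iii), dispatching both (ii)$\Rightarrow$(iii) and (iv)$\Rightarrow$(v) with the one-line remark that each ``follows from the existence of two-sided F{\o}lner sequence''; its (iii)$\Rightarrow$(i) first verifies directly, via the left-F{\o}lner estimate $|g_0F_k\Delta F_k|/|F_k|\to 0$, that the limiting functional $\phi$ is $\Theta$-invariant, and only then shows it is the unique invariant state. By contrast, you prove (ii)$\Rightarrow$(i) and (iii)$\Rightarrow$(i) in one stroke---any invariant $\mu$ satisfies $\mu(x)=\mu(\operatorname{Avg}_{F_k}x)\to\phi(x)$, and $\mathcal{S}^G\neq\emptyset$---without ever checking that $\phi$ itself is invariant, and you replace the paper's two-sided-F{\o}lner shortcut by a direct proof of (i)$\Rightarrow$(v) through the bidual fixed-point lemma. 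The tradeoff is clear: the paper's route stays entirely inside $\mathfrak{A}$ and needs no $\mathfrak{A}^{**}$ machinery, but its (iv)$\Rightarrow$(v) step, as written, is opaque for an \emph{arbitrary} left F{\o}lner sequence (a two-sided sequence is right F{\o}lner, so (iv) controls it, but (v) quantifies over \emph{all} left F{\o}lner sequences, and a generic left F{\o}lner sequence need not be right F{\o}lner); your bidual argument is heavier but handles every left F{\o}lner sequence uniformly and makes the mechanism---unique ergodicity collapses the $\Theta^{**}$-fixed subspace of $\mathfrak{A}^{**}$ to scalars---fully explicit. On the shared implication (i)$\Rightarrow$(iv) the two proofs coincide.
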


\begin{proof}
	Assume throughout that any $x \in \mathfrak{A}$ is nonzero.	
	
	(ii)$\Rightarrow$(iii) Follows from the existence of two-sided Følner sequence.
	
	(iv)$\Rightarrow$(v) Follows from the existence of two-sided Følner sequence.
	
	(iv)$\Rightarrow$(ii) Trivial.
	
	(v)$\Rightarrow$(iii) Trivial.
	
	(iii)$\Rightarrow$(i) Suppose that $\operatorname{Avg}_{F_k} x \to \phi(x) 1 \in \mathbb{C} 1$ weakly for all $x \in \mathfrak{A}$. We claim that $\phi$ is the unique invariant state of $(\mathfrak{A}, G, \Theta)$. First, we demonstrate that $\phi$ is $\Theta$-invariant. Fix $g_0 \in G$, and fix $\epsilon > 0$. Choose $K_1, K_2, K_3 \in \mathbb{N}$ such that
	\begin{align*}
		k	& \geq K_1	& \Rightarrow \left| \phi(\phi(x) 1) - \phi \left( \operatorname{Avg}_{F_k} x \right) \right|	& < \frac{\epsilon}{3} , \\
		k	& \geq K_2	& \Rightarrow \left| \phi(\Theta_{g_0} \phi(x) 1) - \phi (\Theta_{g_0} \operatorname{Avg}_{F_k} x ) \right|	& < \frac{\epsilon}{3} , \\
		k	& \geq K_3	& \Rightarrow \frac{|g_0 F_k \Delta F_k|}{|F_k|}	& < \frac{\epsilon}{3 \| x \|} .
	\end{align*}
	The $K_1, K_2$ exist because we know that in the weak topology, the functionals $\phi , \phi \circ \Theta_{g_0}$ are both continuous, and $K_3$ exists by the amenability of $G$. Let $K = \max \{K_1, K_2, K_3\}$. Then if $k \geq K$, then
	\begin{align*}
		\left| \phi( \Theta_{g_0} x) - \phi(x) \right|	\leq & \left| \phi(\Theta_{g_0} x) - \phi(\Theta_{g_0} \operatorname{Avg}_{F_k} x) \right| \\
		& + \left| \phi(\Theta_{g_0} \operatorname{Avg}_{F_k} x) - \phi(\operatorname{Avg}_{F_k} x) \right| + \left| \phi(\operatorname{Avg}_{F_k} x) - \phi(x) \right| \\
		\leq & \frac{\epsilon}{3} + \left| \phi(\Theta_{g_0} \operatorname{Avg}_{F_k} x) - \phi(\operatorname{Avg}_{F_k} x) \right| + \frac{\epsilon}{3} \\
		= & \frac{2 \epsilon}{3} + \left| \phi(\Theta_{g_0} \operatorname{Avg}_{F_k} x) - \phi(\operatorname{Avg}_{F_k} x) \right| \\
		= & \frac{2 \epsilon}{3} + \left| \phi \left( \frac{1}{|F_k|}\left( \sum_{g \in F_k} \Theta_{g_0 g} x \right) - \left( \frac{1}{|F_k|} \sum_{g \in F_k} \Theta_g x \right) \right) \right| \\
		= & \frac{2 \epsilon}{3} + \left| \phi \left( \frac{1}{|F_k|}\left( \sum_{g \in g_0 F_k} \Theta_{g} x \right) - \left( \frac{1}{|F_k|} \sum_{g \in F_k} \Theta_g x \right) \right) \right| \\
		= & \frac{2 \epsilon}{3} + \left| \phi \left( \frac{1}{|F_k|}\left( \sum_{g \in g_0 F_k \setminus F_k} \Theta_{g} x \right) - \left( \frac{1}{|F_k|} \sum_{g \in F_k \setminus g_0 F_k} \Theta_g x \right) \right) \right| \\
		\leq	& \frac{2 \epsilon}{3} + \left| \phi \left( \frac{1}{|F_k|}\sum_{g \in g_0 F_k \setminus F_k} \Theta_{g} x \right) \right| + \left| \phi \left( \frac{1}{|F_k|} \sum_{g \in F_k \setminus g_0 F_k} \Theta_g x \right) \right| \\
		<	& \frac{2 \epsilon}{3} + \frac{|g_0 F_k \Delta F_k|}{|F_k|} \| x \| \\
		= & \epsilon .
	\end{align*}
	Therefore $\phi$ is $\Theta$-invariant. To see that it is positive, it suffices to observe that $x \geq 0 \Rightarrow \operatorname{Avg}_{F_k} x \geq 0$, meaning that $\phi(x) = \lim_{k \to \infty} \phi(\operatorname{Avg}_{F_k} x) \geq 0$. To see that $\phi(1) = 1$, we just observe that $\operatorname{Avg}_{F_k} 1 = 1$ for all $k \in \mathbb{N}$.
	
	Now we show that $\phi$ is the \emph{unique} $\Theta$-invariant state. Let $\psi$ be any invariant state. Then
	\begin{align*}
		\psi(x)	& = \psi(\operatorname{Avg}_{F_k} x) \\
		& \stackrel{k \to \infty}{\to} \psi(\phi(x) 1) \\
		& = \phi(x) \psi(1) \\
		& = \phi(x) .
	\end{align*}
	Therefore $\psi = \phi$, and so $(\mathfrak{A}, G, \Theta)$ is uniquely ergodic.
	
	(i)$\Rightarrow$(iv) Fix a right Følner sequence $(F_k)_{k = 1}^\infty$, and assume for contradiction that $(\mathfrak{A}, G, \Theta)$ is uniquely ergodic with $\Theta$-invariant state $\phi$, but that there exists $x \in \mathfrak{A}$ such that $\left( \operatorname{Avg}_{F_k} x \right)_{k = 1}^\infty$ does \emph{not} converge in norm to a scalar, and in particular does not converge in norm to $\phi(x) 1$. Since we can decompose $x$ into its real and imaginary parts, we can assume that $x \in \mathfrak{A}_\mathrm{sa}$. Fix $\epsilon_0 > 0$ for which there exists an infinite sequence $k_1 < k_2 < \cdots$ such that $\left\| \operatorname{Avg}_{F_{k_n}} x - \phi(x) 1 \right\| \geq \epsilon_0$. Then for each $n \in \mathbb{N}$ exists a state $\psi_n$ on $\mathfrak{A}$ such that $\left| \psi_n \left( \operatorname{Avg}_{F_{k_n}} x - \phi(x) 1 \right) \right| = \left\| \operatorname{Avg}_{F_{k_n}} x - \phi(x) 1 \right\|$.
	
	Set
	$$\omega_n = \psi_n \circ \operatorname{Avg}_{F_{k_n}} ,$$
	so $\omega_n (x - \phi(x) 1) = \psi_n \left( \operatorname{Avg}_{F_{k_n}} x - \phi(x) 1 \right)$. Then $(\omega_n)_{n = 1}^\infty$ has a subsequence, call it $(\omega_{n_j})_{j = 1}^\infty$ which converges in the weak*-topology to some $\omega$. This $\omega$ is also a state on $\mathfrak{A}$, and by Lemma \ref{K-B}, we know $\omega$ is $\Theta$-invariant. But $\omega \neq \phi$, since
	\begin{align*}
		\left| \omega(x) - \phi(x) \right|	& = \lim_{j \to \infty} \left| \omega_{n_j}(x) - \phi(x) \right| \\
		& = \lim_{j \to \infty} \left| \omega_{n_j}(x - \phi(x) 1) \right| \\
		& = \lim_{j \to \infty} \left| \psi_{n_j}(\operatorname{Avg}_{F_{k_{n_j}}} x - \phi(x) 1) \right| \\
		& = \lim_{j \to \infty} \left\| \operatorname{Avg}_{F_{k_{n_j}}} x - \phi(x) 1 \right\| \\
		& \geq \epsilon_0 .
	\end{align*}
	This contradicts $(\mathfrak{A}, G, \Theta)$ being uniquely ergodic.
\end{proof}

\begin{Rmk}
	Although \cite[Theorem 5.2]{DuvenhageStroeh} describes conditions under which unique ergodicity of an action of an amenable group on a C*-algebra can be related to the convergence of ergodic averages, that result is not a direct generalization of our Theorem \ref{Unique ergodicity equivalent statements}.
\end{Rmk}

In order to develop the gauge machinery from the previous section in the context of actions of amenable groups, we will need to use slightly different techniques, since we do not have access to the Subadditivity Lemma. The main results of the remainder of this section can be summarized as follows.
\begin{Main results}
	Let $\mathbf{F} = (F_k)_{k = 1}^\infty$ be a right Følner sequence.
	\begin{enumerate}[label=(\alph*)]
		\item Let $(\mathfrak{A}, G, \Theta)$ be a C*-dynamical system, and let $\mathbf{F} = (F_k)_{k = 1}^\infty$ be a right Følner sequence for $G$. Then if $a \in \mathfrak{A}$ is a positive element, then the sequence
		$\left( \left\| \frac{1}{|F_k|} \sum_{g \in F_k} \Theta_g a \right\| \right)_{k = 1}^\infty$ converges to $m \left( a \vert \mathcal{S}^G \right)$.
		\item Let $(\mathfrak{A}, G, \Theta; \iota)$ be a faithful C*-model of $(\mathfrak{M}, \rho, G, \Xi)$. Then the following conditions are related by the implications (i)$\iff$(ii)$\Rightarrow$(iii).
		\begin{enumerate}[label=(\roman*)]
			\item The C*-dynamical system $(\mathfrak{A}, G, \Theta)$ is uniquely ergodic.
			\item The C*-dynamical system $(\mathfrak{A}, G, \Theta)$ is strictly ergodic.
			\item $\Gamma(\iota(a)) = \rho(\iota(a))$ for all positive $a \in \mathfrak{A}$.
		\end{enumerate}
		Further, if $\mathcal{S}^G$ is a simplex, then (iii)$\Rightarrow$(i).
	\end{enumerate}
\end{Main results}

\begin{Thm}\label{Gauge exists for C*-dynamical systems}
	Let $(\mathfrak{A}, G, \Theta)$ be a C*-dynamical system, and let $\mathbf{F} = (F_k)_{k = 1}^\infty$ be a right Følner sequence for $G$. Then if $a \in \mathfrak{A}$ is a positive element, then the sequence
	$\left( \left\| \frac{1}{|F_k|} \sum_{g \in F_k} \Theta_g a \right\| \right)_{k = 1}^\infty$ converges to $m \left( a \vert \mathcal{S}^G \right)$.
\end{Thm}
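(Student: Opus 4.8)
The plan is to adapt the argument for the singly generated case (Theorem \ref{Singly generated gauge}) to the amenable setting. The obstruction is that, without the Subadditivity Lemma, we cannot directly argue that the sequence $\left( \left\| \operatorname{Avg}_{F_k} a \right\| \right)_{k=1}^\infty$ converges; instead I would prove convergence by squeezing, establishing simultaneously that $m(a \vert \mathcal{S}^G) \leq \liminf_k \left\| \operatorname{Avg}_{F_k} a \right\|$ and $\limsup_k \left\| \operatorname{Avg}_{F_k} a \right\| \leq m(a \vert \mathcal{S}^G)$.

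For the lower bound, I would use any $\Theta$-invariant state. If $\phi \in \mathcal{S}^G$, then invariance gives $\phi(\operatorname{Avg}_{F_k} a) = \phi(a)$, while positivity of $a$ (hence of $\operatorname{Avg}_{F_k} a$) gives $\phi(a) = \phi(\operatorname{Avg}_{F_k} a) \leq \left\| \operatorname{Avg}_{F_k} a \right\|$. Taking the supremum over $\phi \in \mathcal{S}^G$ yields $m(a \vert \mathcal{S}^G) \leq \left\| \operatorname{Avg}_{F_k} a \right\|$ for every $k$, and in particular $m(a \vert \mathcal{S}^G) \leq \inf_k \left\| \operatorname{Avg}_{F_k} a \right\| \leq \liminf_k \left\| \operatorname{Avg}_{F_k} a \right\|$.

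For the upper bound, I would pass to a subsequence $k_1 < k_2 < \cdots$ along which $\left\| \operatorname{Avg}_{F_{k_n}} a \right\| \to \limsup_k \left\| \operatorname{Avg}_{F_k} a \right\| =: L$, noting that $(F_{k_n})_n$ is again a right \Folner{} sequence. For each $n$, weak*-compactness of $\mathcal{S}$ (equivalently Hahn--Banach) provides a state $\sigma_n$ with $\sigma_n(\operatorname{Avg}_{F_{k_n}} a) = \left\| \operatorname{Avg}_{F_{k_n}} a \right\|$. Setting $\omega_n = \sigma_n \circ \operatorname{Avg}_{F_{k_n}}$, we get $\omega_n(a) = \left\| \operatorname{Avg}_{F_{k_n}} a \right\| \to L$. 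By weak*-compactness of $\mathcal{S}$ we may pass to a further subsequence along which $\omega_n \to \omega$ for some state $\omega$; by Lemma \ref{K-B}, applied to the \Folner{} sequence $(F_{k_n})_n$, the limit $\omega$ is $\Theta$-invariant, so $\omega \in \mathcal{S}^G$. Since $\omega_n(a) \to L$, continuity of evaluation gives $\omega(a) = L$, whence $L = \omega(a) \leq m(a \vert \mathcal{S}^G)$.

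Combining the two bounds gives $m(a \vert \mathcal{S}^G) \leq \liminf_k \left\| \operatorname{Avg}_{F_k} a \right\| \leq \limsup_k \left\| \operatorname{Avg}_{F_k} a \right\| \leq m(a \vert \mathcal{S}^G)$, so the sequence converges to $m(a \vert \mathcal{S}^G)$. The only genuine subtlety --- and the reason this is not a verbatim copy of Theorem \ref{Singly generated gauge} --- is the absence of subadditivity; it is circumvented by realizing the $\limsup$ along a subsequence and feeding that subsequence into the Krylov--Bogolyubov Lemma \ref{K-B}, which is exactly what guarantees the limiting state is invariant.
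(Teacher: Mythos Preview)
Your proof is correct and follows essentially the same strategy as the paper's: both arguments produce norm-attaining states $\sigma_k$, average them to $\omega_k = \sigma_k \circ \operatorname{Avg}_{F_k}$, invoke Lemma~\ref{K-B} to get an invariant limit state, and pair this with the elementary bound $\phi(a) \leq \|\operatorname{Avg}_{F_k} a\|$ for $\phi \in \mathcal{S}^G$. The only organizational difference is that you phrase the convergence as an explicit $\liminf/\limsup$ squeeze (passing to a subsequence realizing the $\limsup$ before choosing the $\sigma_n$), whereas the paper chooses the $\sigma_k$ along the full sequence and then argues that every weak*-limit point of $(\omega_k)$ lies in $\mathcal{S}^G_{\mathrm{max}}(a)$; these are equivalent packagings of the same idea.
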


\begin{proof}	
	For each $k \in \mathbb{N}$, choose a state $\sigma_k$ on $\mathfrak{A}$ such that
	$$\sigma_k \left( \frac{1}{|F_k|} \sum_{g \in F_k} \Theta_g a \right) = \left\| \frac{1}{|F_k|} \sum_{g \in F_k} \Theta_g a \right\| .$$
	Let $\omega_k = \frac{1}{|F_k|} \sum_{g \in F_k} \sigma_k \circ \Theta_g$, so
	\begin{align*}
		\omega_k(x)	& = \frac{1}{|F_k|} \sum_{g \in F_k} \sigma_k (\Theta_g x) \\
		& = \sigma_k \left( \frac{1}{|F_k|} \sum_{g \in F_k} \Theta_g x \right) , \\
		\omega_k(a)	& = \sigma_k \left( \frac{1}{|F_k|} \sum_{g \in F_k} \Theta_g a \right) \\
		& = \left\| \frac{1}{|F_k|} \sum_{g \in F_k} \Theta_g a \right\| .
	\end{align*}
	This means that in order to show that $\left( \left\| \frac{1}{|F_k|} \sum_{g \in G} \Theta_g a \right\| \right)_{k = 1}^\infty$ converges to $m \left( a \vert \mathcal{S}^G \right)$, it suffices to show that $\omega_k(a) \stackrel{k \to \infty}{\to} m \left( a \vert \mathcal{S}^G \right)$. So for the remainder of this proof, we are going to be looking instead at the sequence $(\omega_k)_{k = 1}^\infty$.
	
	Let $k_1 < k_2 < \cdots$ be some sequence such that $\left( \omega_{k_n} \right)_{n = 1}^\infty$ converges in the weak*-topology to some $\omega$. It follows from Lemma \ref{K-B} that $\omega$ is $\Theta$-invariant. To see that \linebreak$(\omega_k(a)_{k = 1}^\infty = \left( \left\| \frac{1}{|F_k|} \sum_{g \in G} \Theta_g \iota(a) \right\| \right)_{k = 1}^\infty$ converges to $m \left( a \vert \mathcal{S}^G \right)$, it will suffice to show that every limit point $\omega$ of $\left( \omega_k : k \in \mathbb{N} \right)$ satisfies 
	$$\omega \in \mathcal{S}_\mathrm{max}^G (a) .$$
	This follows because if there existed a subsequence $k_1 < k_2 < \cdots$ of $(\omega_k)_{k = 1}^\infty$ such that $\omega_{k_n}(a) \stackrel{n \to \infty}{\to} z \neq m \left( a \vert \mathcal{S}^G \right)$, then by compactness, that subsequence $\left( \omega_{k_n} : n \in \mathbb{N} \right)$ would have some subsequence converging to some $\omega'$ for which $\omega'(a) = z \neq m \left( a \vert \mathcal{S}^G \right)$, meaning in particular that $\omega' \not \in \mathcal{S}_\mathrm{max}^G(a)$.
	
	So let $k_1 < k_2 < \cdots$ be some sequence such that $\left( \omega_{k_n} \right)_{n = 1}^\infty$ converges in the weak*-topology to some $\omega$. As has already been remarked, we have that $\omega \in \mathcal{S}^G$, so $\omega(a) \leq m \left( a \vert \mathcal{S}^G \right)$. We prove the opposite inequality. Let $\phi \in \mathcal{S}^G$. Then
	\begin{align*}
		\phi(a)	& = \phi \left( \frac{1}{\left| F_{k_n} \right|} \sum_{g \in F_{k_n} } \Theta_g a \right)	& \left( \textrm{$\phi$ is $\Theta$-invariant} \right) \\
		& \leq \left\| \frac{1}{\left| F_{k_n} \right|} \sum_{g \in F_{k_n}} \Theta_g a \right\| \\
		& = \omega_{k_n}(a) \\
		\Rightarrow \phi(a)	& \leq \lim_{n \to \infty} \omega_{k_n}(a) \\
		& = \omega(a) .
	\end{align*}
	Therefore $\omega(a) \geq \sup_{\psi \in \mathcal{S}^G } \psi(a) = m \left( a \vert \mathcal{S}^G \right)$. This establishes the desired identity.
\end{proof}

\begin{Rmk}
An alternate proof of Theorem \ref{Gauge exists for C*-dynamical systems} using nonstandard analysis is presented in Section \ref{NSA}.
\end{Rmk}

\begin{Cor}\label{Unique ergodicity for C*-dynamical systems in terms of gauge}
Let $(\mathfrak{A}, G, \Theta)$ be a C*-dynamical system, and let $\mathbf{F} = (F_k)_{k = 1}^\infty$ be a right Følner sequence for $G$. Let $\phi \in \mathcal{S}^G$. Then $(\mathfrak{A}, G, \Theta)$ is uniquely ergodic if and only if for every positive element $a \in \mathfrak{A}$, the sequence
$\left( \left\| \frac{1}{|F_k|} \sum_{g \in F_k} \Theta_g a \right\| \right)_{k = 1}^\infty$ converges to $\phi(a)$.
\end{Cor}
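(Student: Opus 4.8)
The plan is to reduce the statement to a computation involving the value $m \left( a \vert \mathcal{S}^G \right)$ by invoking Theorem \ref{Gauge exists for C*-dynamical systems}, which already identifies the limit of the sequence $\left( \left\| \frac{1}{|F_k|} \sum_{g \in F_k} \Theta_g a \right\| \right)_{k = 1}^\infty$ as $m \left( a \vert \mathcal{S}^G \right)$ for every positive $a$. Since limits in $\mathbb{R}$ are unique, the hypothesis that this sequence converges to $\phi(a)$ is equivalent to the equality $m \left( a \vert \mathcal{S}^G \right) = \phi(a)$. Thus the corollary reduces to the purely static claim that $(\mathfrak{A}, G, \Theta)$ is uniquely ergodic if and only if $m \left( a \vert \mathcal{S}^G \right) = \phi(a)$ holds for all positive $a \in \mathfrak{A}$.

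For the forward implication, I would observe that unique ergodicity means $\mathcal{S}^G$ is a singleton; since $\phi \in \mathcal{S}^G$ by hypothesis, this forces $\mathcal{S}^G = \{\phi\}$, whence $m \left( a \vert \mathcal{S}^G \right) = \sup_{\psi \in \mathcal{S}^G} \psi(a) = \phi(a)$ for every $a$. This direction is essentially immediate once the reduction is in place.

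For the converse, I would assume $m \left( a \vert \mathcal{S}^G \right) = \phi(a)$ for all positive $a$, fix an arbitrary $\psi \in \mathcal{S}^G$, and aim to show $\psi = \phi$. For positive $a$ the definition of $m$ gives $\psi(a) \leq m \left( a \vert \mathcal{S}^G \right) = \phi(a)$. The main point requiring care is the passage from positive elements to arbitrary ones: given a self-adjoint $x \in \mathfrak{R}$, I would apply this inequality to the positive element $x + \|x\| 1$ and use $\psi(1) = \phi(1) = 1$ so that the constant term cancels, yielding $\psi(x) \leq \phi(x)$. Running the same argument with $-x$ in place of $x$ gives the reverse inequality, so $\psi(x) = \phi(x)$ for all self-adjoint $x$. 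Writing a general element as $x + iy$ with $x, y \in \mathfrak{R}$ its real and imaginary parts then gives $\psi = \phi$, so $\mathcal{S}^G = \{\phi\}$ and the system is uniquely ergodic.

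I expect the entire argument to be short once Theorem \ref{Gauge exists for C*-dynamical systems} is available; the only genuinely delicate step is the extension of the inequality $\psi(a) \leq \phi(a)$ from positive $a$ to all of $\mathfrak{A}$, which is handled by the standard shift-by-a-scalar trick together with the $\pm x$ symmetry and complex linearity.
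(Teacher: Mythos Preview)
Your argument is correct and in fact more elementary than the paper's. Both directions agree on the forward implication: unique ergodicity forces $\mathcal{S}^G = \{\phi\}$, so $m(a \vert \mathcal{S}^G) = \phi(a)$ and Theorem~\ref{Gauge exists for C*-dynamical systems} gives the desired convergence. For the converse, however, the paper argues by contrapositive: assuming the system is not uniquely ergodic, it picks an extreme point $\psi \neq \phi$ of $\mathcal{S}^G$ and invokes Corollary~\ref{Uniquely maximizing states} to find a self-adjoint (then shifted to positive) $a$ with $\{\psi\} = \mathcal{S}^G_{\mathrm{max}}(a)$, so that the limit equals $\psi(a) > \phi(a)$. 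That corollary, in turn, rests on Theorem~\ref{Simplices and exposed faces}, which requires $\mathcal{S}^G$ to be a simplex. Your direct argument---$\psi(a) \leq m(a \vert \mathcal{S}^G) = \phi(a)$ for positive $a$, then the shift-by-$\|x\|$ trick and the $\pm x$ symmetry to pass to all self-adjoint elements---sidesteps the Choquet-theoretic machinery entirely and works without any simplex hypothesis on $\mathcal{S}^G$. So your route is not only different but strictly more robust: it proves the corollary as stated, whereas the paper's proof tacitly leans on an assumption that the statement does not include.
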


\begin{proof}
($\Rightarrow$) Suppose $(\mathfrak{A}, G, \Theta)$ is uniquely ergodic. Then $\phi(a) = m \left( a \vert \mathcal{S}^G \right)$ for all positive $a \in \mathfrak{A}$, so by Theorem \ref{Gauge exists for C*-dynamical systems} $\left\| \frac{1}{|F_k|} \sum_{g \in F_k} \Theta_g a \right\| \stackrel{k \to \infty}{\to} \phi(a)$.

($\Leftarrow$) We'll prove the contrapositive. Suppose $(\mathfrak{A}, G, \Theta)$ is \emph{not} uniquely ergodic. Then there exists an extreme point $\psi$ of $\mathcal{S}^G$ different from $\phi$. By Corollary \ref{Uniquely maximizing states}, there exists $a \in \mathfrak{A}$ self-adjoint such that $\{\phi\} = \mathcal{S}_{\mathrm{max}}^G(a)$. We can assume that $a$ is positive, replacing $a$ by $a + r$ for a sufficiently large positive real number $r > 0$ otherwise. Thus $\lim_{k \to \infty} \left\| \frac{1}{|F_k|} \sum_{g \in F_k} \Theta_g a \right\| = \psi(a) > \phi(a)$.
\end{proof}

\begin{Def}
	Given a C*-dynamical system $(\mathfrak{A}, G, \Theta)$, a positive element $a \in \mathfrak{A}$, and a right Følner sequence $\mathbf{F} = (F_k)_{k = 1}^\infty$ for $G$, we define the \emph{gauge} of $a$ to be the limit
	$$\Gamma(x) : = \lim_{k \to \infty} \left\| \frac{1}{|F_k|} \sum_{g \in F_k} \Theta_g x \right\| .$$
\end{Def}

Theorem \ref{Gauge exists for C*-dynamical systems} shows that the gauge exists, but Theorem \ref{Amenable gamma identity} demonstrates the way that the gauge interacts with a W*-dynamical system and a C*-model. Moreover, the gauge is dependent only on $(\mathfrak{A}, G, \Theta)$, and independent of the right Følner sequence $\mathbf{F} = (F_k)_{k = 1}^\infty$. As such, even though the gauge as we have described it is computed using a right Følner sequence $\mathbf{F} = (F_k)_{k = 1}^\infty$, we do not need to include $\mathbf{F}$ in our notation for $\Gamma$.

\begin{Cor}\label{Ergodic optimization through non-surjective *-homomorphisms}
	Let $\left( \mathfrak{A} , G , \Theta \right) , \left( \tilde{\mathfrak{A}} , G , \tilde{\Theta} \right)$ be two C*-dynamical systems, and let $\pi : \mathfrak{A} \to \tilde{\mathfrak{A}}$ be a *-homomorphism (not necessarily surjective) such that
	\begin{align*}
		\tilde{\Theta}_g \circ \pi	& = \pi \circ \Theta_g	& (\forall g \in G) .
	\end{align*}
	Let $\tilde{\mathcal{S}}^G$ denote the space of $\tilde{\Theta}$-invariant states on $\tilde{\Theta}$. Then $m \left( \pi(a) \vert \tilde{\mathcal{S}}^G \right) = m \left( a \vert \operatorname{Ann}(\ker \pi) \right)$.
\end{Cor}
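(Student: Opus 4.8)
The plan is to sidestep any attempt to build a bijection between the two state spaces in play — once $\pi$ is not surjective, $\tilde{\mathcal{S}}^G$ and $\operatorname{Ann}(\ker\pi)$ are not naturally in correspondence — and instead to show that \emph{both} sides of the asserted identity are computed by one and the same gauge. Concretely, I would fix a right Følner sequence $\mathbf{F}=(F_k)_{k=1}^\infty$ for $G$ and argue that each of $m\!\left(\pi(a)\mid\tilde{\mathcal{S}}^G\right)$ and $m\!\left(a\mid\operatorname{Ann}(\ker\pi)\right)$ equals the common norm-limit $\lim_{k\to\infty}\bigl\|\pi(\operatorname{Avg}_{F_k}a)\bigr\|$. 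Since the gauge (via Theorem \ref{Gauge exists for C*-dynamical systems}) is only available for positive elements, I first reduce to that case: for self-adjoint $a$ and any $r>\|a\|$ the element $a+r$ is positive, and because $\pi$ is unital and invariant states are unital, replacing $a$ by $a+r$ increases both sides by exactly $r$; hence it suffices to treat positive $a$, for which no further hypothesis on $\pi$ is needed.

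For the left-hand side, observe that $\pi(a)$ is a positive element of $\tilde{\mathfrak{A}}$, so Theorem \ref{Gauge exists for C*-dynamical systems} applied to $(\tilde{\mathfrak{A}},G,\tilde\Theta)$ gives
\[
m\!\left(\pi(a)\mid\tilde{\mathcal{S}}^G\right)=\lim_{k\to\infty}\left\|\frac{1}{|F_k|}\sum_{g\in F_k}\tilde\Theta_g\pi(a)\right\|=\lim_{k\to\infty}\bigl\|\pi(\operatorname{Avg}_{F_k}a)\bigr\|,
\]
where the second equality uses the equivariance $\tilde\Theta_g\circ\pi=\pi\circ\Theta_g$ together with the linearity of $\pi$ to pull the averaging operator inside $\pi$.

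For the right-hand side, write $\mathfrak{I}=\ker\pi$, a proper closed two-sided ideal that is $\Theta$-invariant (since $\pi\circ\Theta_g=\tilde\Theta_g\circ\pi$ and each $\Theta_g$ is invertible), and let $\pi_0:\mathfrak{A}\twoheadrightarrow\mathfrak{A}/\mathfrak{I}$ be the quotient map with induced action $\bar\Theta$, so that $(\mathfrak{A}/\mathfrak{I},G,\bar\Theta)$ is again a C*-dynamical system. By the value-preserving correspondence between $\operatorname{Ann}(\mathfrak{I})$ and the $\bar\Theta$-invariant states on $\mathfrak{A}/\mathfrak{I}$ recorded just after the definition of $\operatorname{Ann}$, the quantity $m\!\left(a\mid\operatorname{Ann}(\mathfrak{I})\right)$ is the supremum of $\bar\psi(\pi_0(a))$ over all $\bar\Theta$-invariant states $\bar\psi$ on $\mathfrak{A}/\mathfrak{I}$. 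Since $\pi_0(a)$ is positive, Theorem \ref{Gauge exists for C*-dynamical systems} applied to the quotient system identifies this supremum with $\lim_{k\to\infty}\bigl\|\pi_0(\operatorname{Avg}_{F_k}a)\bigr\|_{\mathfrak{A}/\mathfrak{I}}$. Finally $\pi$ factors as $\pi=\bar\pi\circ\pi_0$ with $\bar\pi:\mathfrak{A}/\mathfrak{I}\hookrightarrow\tilde{\mathfrak{A}}$ injective, hence isometric, so $\|\pi_0(x)\|_{\mathfrak{A}/\mathfrak{I}}=\|\pi(x)\|$ for every $x\in\mathfrak{A}$; this identifies the right-hand limit with $\lim_{k\to\infty}\bigl\|\pi(\operatorname{Avg}_{F_k}a)\bigr\|$, matching the left-hand side and closing the argument.

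The conceptual crux — and essentially the only place any thought is required — is the decision to route everything through the gauge. Because the gauge is a purely norm-theoretic object, the norm of $\pi(\operatorname{Avg}_{F_k}a)$ is insensitive to whether this average is regarded inside $\tilde{\mathfrak{A}}$ or inside the isometrically embedded quotient $\mathfrak{A}/\ker\pi$, and this is exactly what lets the two differently-defined optima coincide without any comparison of their respective invariant-state spaces. Once this framing is adopted, the remaining ingredients are all routine: equivariance to pass the averaging operator through $\pi$, the fact that injective $*$-homomorphisms are isometric, and the standard identification of $\operatorname{Ann}(\mathfrak{I})$ with the invariant states on $\mathfrak{A}/\mathfrak{I}$. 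I expect the only genuine subtlety to be bookkeeping the unitality of $\pi$, which is needed (solely) for the reduction from self-adjoint to positive elements.
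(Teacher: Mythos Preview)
Your proposal is correct and takes essentially the same approach as the paper: both arguments route each side of the identity through the gauge of Theorem~\ref{Gauge exists for C*-dynamical systems}, exploiting that the gauge is a norm-theoretic quantity and hence indifferent to whether $\pi(\operatorname{Avg}_{F_k}a)$ is viewed inside $\tilde{\mathfrak{A}}$ or inside the (isometrically embedded) image. The only cosmetic difference is that the paper works directly with the image subalgebra $\mathfrak{B}=\pi(\mathfrak{A})\subseteq\tilde{\mathfrak{A}}$ and then invokes Theorem~\ref{Ergodic optimization through *-homomorphisms} for the surjection $\pi:\mathfrak{A}\twoheadrightarrow\mathfrak{B}$, whereas you pass through the isomorphic quotient $\mathfrak{A}/\ker\pi$; your explicit reduction from self-adjoint to positive $a$ is in fact a welcome clarification, since Theorem~\ref{Gauge exists for C*-dynamical systems} is stated only for positive elements and the paper's own proof silently assumes this.
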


\begin{proof}
	Let $\mathfrak{B} = \pi(\mathfrak{A})$, and let $H : G \to \operatorname{Aut}(\mathfrak{B})$ be the action $H_g = \tilde{\Theta}_g \vert_{\mathfrak{B}}$. Let $K$ denote the space of all $H$-invariant states on $\mathfrak{B}$. Then
	\begin{align*}
		m \left( \pi(a) \vert \tilde{\mathcal{S}}^G \right)	& = \Gamma_{ \tilde{\mathfrak{A}} } (\pi(a))	& \left( \textrm{Theorem \ref{Gauge exists for C*-dynamical systems}}\right)	\\
		& = \Gamma_{ \mathfrak{B} } (\pi(a)) \\ 
		& = m \left( \pi(a) \vert K \right)	& \left(\textrm{Theorem \ref{Gauge exists for C*-dynamical systems}} \right)\\
		& = m \left( a \vert \operatorname{Ann}(\ker \pi) \right)	& \left(\textrm{Theorem \ref{Ergodic optimization through *-homomorphisms}} \right).
	\end{align*}
	
\end{proof}

\begin{Cor}\label{Amenable gamma identity}
	Let $(\mathfrak{M}, \rho, G, \Xi)$ be a W*-dynamical system, and let $(\mathfrak{A}, G, \Theta; \iota)$ be a C*-model of $(\mathfrak{M}, \rho, \mathbb{Z}, \Xi)$. Then if $a \in \mathfrak{A}$ is a positive element, then
	$$\Gamma(\iota(a)) = m \left( a \vert \operatorname{Ann}(\ker \iota) \right) .$$
\end{Cor}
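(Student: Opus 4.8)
The plan is to follow the same three-step assembly used in the proof of Corollary \ref{Gamma estimate}, but with the amenable-group analogues of the relevant results substituted in. First I would pass from the ambient von Neumann algebra to the image subalgebra: set $\tilde{\mathfrak{A}} = \iota(\mathfrak{A})$, which is a C*-subalgebra of $\mathfrak{M}$ (the image of a *-homomorphism of C*-algebras is closed), and let $\tilde{\Theta}_g = \Xi_g \vert_{\tilde{\mathfrak{A}}}$, which is a well-defined action by the invariance condition $\Xi_g(\iota(\mathfrak{A})) = \iota(\mathfrak{A})$ from the definition of a C*-model. This makes $(\tilde{\mathfrak{A}}, G, \tilde{\Theta})$ a C*-dynamical system, and I write $\tilde{\mathcal{S}}^G$ for its space of $\tilde{\Theta}$-invariant states.

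The core of the argument is the chain of identities
\begin{align*}
\Gamma(\iota(a)) = \Gamma_{\tilde{\mathfrak{A}}}(\iota(a)) = m\left( \iota(a) \vert \tilde{\mathcal{S}}^G \right) = m\left( a \vert \operatorname{Ann}(\ker \iota) \right).
\end{align*}
The first equality records that the gauge does not see which ambient C*-algebra the element lives in: because $\tilde{\Theta}$ is by definition the restriction of $\Xi$, the Følner averages $\frac{1}{|F_k|}\sum_{g \in F_k}\Xi_g \iota(a)$ are the same elements whether viewed in $\mathfrak{M}$ or in $\tilde{\mathfrak{A}}$, and the isometric nature of the C*-subalgebra inclusion $\tilde{\mathfrak{A}} \hookrightarrow \mathfrak{M}$ ensures their norms agree. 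The second equality is exactly Theorem \ref{Gauge exists for C*-dynamical systems} applied to the C*-dynamical system $(\tilde{\mathfrak{A}}, G, \tilde{\Theta})$ and the positive element $\iota(a)$. The third equality is Theorem \ref{Ergodic optimization through *-homomorphisms}, applied to $\iota : \mathfrak{A} \twoheadrightarrow \tilde{\mathfrak{A}}$; this map is equivariant, since $\tilde{\Theta}_g(\iota(a)) = \Xi_g(\iota(a)) = \iota(\Theta_g a)$ by condition (c) of the definition of a C*-model, and is surjective onto its image, with kernel $\ker \iota$.

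Most of the substance has been pre-packaged into Theorems \ref{Gauge exists for C*-dynamical systems} and \ref{Ergodic optimization through *-homomorphisms}, so the only point requiring genuine care is the first equality — that the gauge is intrinsic to the subsystem $(\tilde{\mathfrak{A}}, G, \tilde{\Theta})$ and independent of the embedding into $\mathfrak{M}$. I expect this to be the main (albeit mild) obstacle: one must confirm both that the image is a genuine C*-subalgebra on which $\Xi$ restricts to an action, so that Theorem \ref{Gauge exists for C*-dynamical systems} is even applicable, and that the norms of the Følner averages are unaffected by the isometric inclusion $\tilde{\mathfrak{A}} \hookrightarrow \mathfrak{M}$. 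Once these verifications are in place, the result is immediate from the displayed chain.
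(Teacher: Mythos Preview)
Your proposal is correct and matches the paper's proof essentially line for line: the paper also passes to $\tilde{\mathfrak{A}} = \iota(\mathfrak{A})$ with the restricted action $\tilde{\Theta}_g = \Xi_g\vert_{\tilde{\mathfrak{A}}}$, then invokes the same chain $\Gamma_{\mathfrak{M}}(\iota(a)) = \Gamma_{\tilde{\mathfrak{A}}}(\iota(a)) = m(\iota(a)\vert\tilde{\mathcal{S}}^G) = m(a\vert\operatorname{Ann}(\ker\iota))$ via Theorem \ref{Gauge exists for C*-dynamical systems} and Theorem \ref{Ergodic optimization through *-homomorphisms}. If anything, you are slightly more careful than the paper in justifying the first equality and the applicability of the two theorems.
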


\begin{proof}
	Write $\tilde{\mathfrak{A}} = \iota(\mathfrak{A}) \subseteq \mathfrak{M}$, and let $\tilde{\Theta} : G \to \operatorname{Aut} \left( \tilde{\mathfrak{A}} \right)$ be the action $\tilde{ \Theta }_g = \Xi_g \vert_{ \tilde{\mathfrak{A}} }$ obtained by restricting $\Xi$ to $\tilde{\mathfrak{A}}$. Write $\tilde{\mathcal{S}}^G$ for the space of $\tilde{\Theta}$-invariant states on $\tilde{\mathfrak{A}}$.
	
	We know $\Gamma_{\mathfrak{M}}(\iota(a)) = \Gamma_{\tilde{\mathfrak{A}}}(\iota(a))$. By Theorem \ref{Gauge exists for C*-dynamical systems}, we know that $\Gamma_{ \tilde{\mathfrak{A}} }(\iota(a)) = m \left( \iota(a) \vert \tilde{\mathcal{S}}^G \right)$, and by Theorem \ref{Ergodic optimization through *-homomorphisms}, we know that
	$$ m \left( \iota(a) \vert \tilde{\mathcal{S}}^G \right) = m \left( a \vert \operatorname{Ann}(\ker \iota) \right) . $$
\end{proof}

This brings us to our characterization of unique ergodicity with respect to the gauge for C*-models.

\begin{Thm}\label{Amenable strict ergodicity and gauge}
	Let $(\mathfrak{M}, \rho, G, \Xi)$ be a W*-dynamical system, and let $(\mathfrak{A}, G, \Theta; \iota)$ be a faithful C*-model of $(\mathfrak{M}, \rho, G, \Xi)$. Then the following conditions are related by the implications (i)$\iff$(ii)$\Rightarrow$(iii).
	\begin{enumerate}[label=(\roman*)]
		\item The C*-dynamical system $(\mathfrak{A}, G, \Theta)$ is uniquely ergodic.
		\item The C*-dynamical system $(\mathfrak{A}, G, \Theta)$ is strictly ergodic.
		\item $\Gamma(a) = \rho(\iota(a))$ for all positive $a \in \mathfrak{A}$.
	\end{enumerate}
	Further, if $\mathcal{S}^G$ is a simplex, then (iii)$\Rightarrow$(i).
\end{Thm}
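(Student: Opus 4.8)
The plan is to follow the template of the corresponding theorem in the singly generated setting almost verbatim, with the one structural change that wherever that proof called on the subadditive construction of $\Gamma$ for $\mathbb{Z}$-actions, I instead invoke Theorem \ref{Gauge exists for C*-dynamical systems}, which supplies the identity $\Gamma(a) = m\left(a \vert \mathcal{S}^G\right)$ for every positive $a \in \mathfrak{A}$. Since the model is faithful we have $\ker \iota = \{0\}$, so $\operatorname{Ann}(\ker \iota) = \mathcal{S}^G$ and the gauge computed in $\mathfrak{A}$ agrees with the one computed in $\iota(\mathfrak{A})$; this is why condition (iii) may be phrased in terms of $\Gamma(a)$ rather than $\Gamma(\iota(a))$.

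For (i)$\Rightarrow$(ii), I would first note that $\rho \circ \iota$ is $\Theta$-invariant, using that $\rho$ is $\Xi$-invariant together with the equivariance $\Xi_g \circ \iota = \iota \circ \Theta_g$. Unique ergodicity then forces $\rho \circ \iota$ to be the unique invariant state, and since $\iota$ is injective and $\rho$ is faithful, the composite $\rho \circ \iota$ is faithful, yielding strict ergodicity; the converse (ii)$\Rightarrow$(i) is immediate because strict ergodicity subsumes unique ergodicity. For (i)$\Rightarrow$(iii), unique ergodicity means $\rho \circ \iota$ is the only invariant state, hence it is the unique $\left(a \vert \mathcal{S}^G\right)$-maximizing state for any positive $a$, and Theorem \ref{Gauge exists for C*-dynamical systems} gives $\Gamma(a) = m\left(a \vert \mathcal{S}^G\right) = \rho(\iota(a))$.

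The substantive step, and the one where the extra hypothesis is genuinely needed, is the conditional converse (iii)$\Rightarrow$(i) under the assumption that $\mathcal{S}^G$ is a simplex. Here I would argue by contraposition: if the system is not uniquely ergodic, then $\mathcal{S}^G$ contains more than one point, so by the Krein–Milman theorem it has an extreme point $\phi$ distinct from $\rho \circ \iota$. The simplex hypothesis is exactly what licenses Corollary \ref{Uniquely maximizing states}, producing a self-adjoint $a$ with $\{\phi\} = \mathcal{S}_\mathrm{max}^G(a)$; replacing $a$ by $a + r$ for large $r > 0$ makes $a$ positive without altering its maximizing set. Then Theorem \ref{Gauge exists for C*-dynamical systems} gives $\Gamma(a) = \phi(a)$, while the unique maximality of $\phi$ forces $\rho(\iota(a)) < \phi(a)$, so $\Gamma(a) \neq \rho(\iota(a))$ and (iii) fails. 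The only real obstacle is conceptual rather than computational: one must recognize that the separating-functional argument behind Corollary \ref{Uniquely maximizing states} (via the exposed-face characterization of Theorem \ref{Simplices and exposed faces}) relies on the lattice structure of a simplex, which is precisely why this direction remains conditional.
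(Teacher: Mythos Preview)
Your proposal is correct and follows essentially the same approach as the paper's own proof: each implication is handled identically, with Theorem \ref{Gauge exists for C*-dynamical systems} supplying $\Gamma(a) = m\left(a \vert \mathcal{S}^G\right)$ and Corollary \ref{Uniquely maximizing states} furnishing the separating positive element in the contrapositive argument for (iii)$\Rightarrow$(i). If anything, your write-up is slightly more explicit about why $\rho \circ \iota$ is faithful and why the simplex hypothesis is needed, but the logical structure is the same.
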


\begin{proof}
	(i)$\Rightarrow$(ii) Suppose that $(\mathfrak{A}, G, \Theta)$ is uniquely ergodic. Then $\rho \circ \iota$ is an invariant state on $\mathfrak{A}$, so it follows that $\rho \circ \iota$ is \emph{the} unique invariant state. But $\rho \circ \iota$ is also faithful, so it follows that $(\mathfrak{A}, G, \Theta)$ is strictly ergodic.
	
	(ii)$\Rightarrow$(i) Trivial.
	
	(i)$\Rightarrow$(iii) Suppose that $(\mathfrak{A}, G, \Theta)$ is uniquely ergodic, and let $a \in \mathfrak{A}$ be positive. Let $\phi$ be an $\left( a \vert \mathcal{S}^G \right)$-maximizing state on $\mathfrak{A}$. Then $\phi = \rho \circ \iota$, since both are invariant states and $(\mathfrak{A}, G, \Theta)$ is uniquely ergodic. Then $\phi = \rho \circ \iota$, so $\Gamma(a) = \phi(a) = \rho(\iota(a))$.
	
	(iii)$\Rightarrow$(i) Suppose that $\mathcal{S}^G$ is a simplex, but that $(\mathfrak{A}, G, \Theta)$ is \emph{not} uniquely ergodic. Let $\phi \in \mathcal{S}^G$ be an extreme point of $\mathcal{S}^G$ different from $\rho \circ \iota$. Then by Corollary \ref{Uniquely maximizing states}, there exists $a \in \mathfrak{A}$ self-adjoint such that $\{ \phi \} = \mathcal{S}_\mathrm{max}^G(a)$. We can assume that $a$ is positive, since otherwise we could replace $a$ with $a + r$ for a sufficiently large positive real number $r > 0$, and $\mathcal{S}_\mathrm{max}^\mathbb{Z}(a) = \mathcal{S}_\mathrm{max}^\mathbb{Z}(a + r)$. Then $\Gamma(a) = \phi(a)$. But by the assumption that $\phi$ is uniquely $\left( a \vert \mathcal{S}^G \right)$-maximizing, it follows that $\rho(\iota(a)) < \phi(a)$. Therefore $\Gamma(a) \neq \rho(\iota(a))$, meaning that (iii) does not attain. Thus $\neg$(i)$\Rightarrow \neg$(iii).
\end{proof}

\section{A noncommutative Herman ergodic theorem}\label{NC Herman section}

For the duration of this section, we assume that $(\mathfrak{A}, G, \Theta)$ is a C*-dynamical system such that $\mathfrak{A}$ is separable, and that $G$ is amenable.

Let $\mathbf{F} = (F_k)_{k = 1}^\infty$ be a right Følner sequence for $G$. Write $\mathscr{P}^\mathbf{F}(S)$ to denote the set of all limit points of sequences of the form $\left( \phi_k \circ \operatorname{Avg}_{F_k} \right)_{k = 1}^\infty$, where $\phi_k \in S$ for all $k \in \mathbb{N}$. Because $\mathbf{F}$ is right Følner, we know from Lemma \ref{K-B} that if $S$ is nonempty, then $\mathscr{P}^\mathbf{F}(S)$ will be a nonempty compact subset of $\mathcal{S}^G$. In particular, if $S \supseteq \mathcal{S}^G$, then $\mathscr{P}^\mathbf{F}(S) = \mathcal{S}^G$ for any choice of $\mathbf{F}$. Moreover, if $S$ is convex and $\Theta$-invariant, then $\mathscr{P}^\mathbf{F}(S) = \overline{S}$.

\begin{Qn}
Is $\mathscr{P}^\mathbf{F}(S)$ dependent on $\mathbf{F}$ in general?
\end{Qn}

We now define two quantities.

\begin{Not}
Let $\mathbf{F}$ be a right Følner sequence for $G$, and $S$ a nonempty subset of $\mathcal{S}$. Let $x \in \mathfrak{R}$. Define
\begin{align*}
\overline{a}_{\mathbf{F}, S}(x)	& : = \sup \left\{ \psi(x) : \psi \in \mathscr{P}^\mathbf{F}(x) \right\} , \\
\underline{a}_{\mathbf{F}, S}(x)	& : = \inf \left\{ \psi(x) : \psi \in \mathscr{P}^\mathbf{F}(x) \right\} , \\
\overline{d}_{\mathbf{F}, S}(x)	& : = \lim_{k \to \infty} \left( \sup \left\{ \phi \left( \operatorname{Avg}_{F_k} x \right) : \phi \in S \right\} \right) , \\
\underline{d}_{\mathbf{F}, S}(x)	& : = \lim_{k \to \infty} \left( \inf \left\{ \phi \left( \operatorname{Avg}_{F_k} x \right) : \phi \in S \right\} \right) .
\end{align*}
\end{Not}

The values $\overline{a}_{\mathbf{F}, S}, \overline{d}_{\mathbf{F}, S}$ can be compared to the $\alpha$ and $\delta$ quantities presented in Section 2 of \cite{Jenkinson}, respectively. Ergodic optimization is concerned with finding the extrema of sequences of ergodic averages of real-valued functions, but there are several ways we might attempt to formalize what an ``extremum" of a sequence of ergodic averages would be. In \cite{Jenkinson}, O. Jenkinson proposes several different ways we might formalize this notion, then demonstrates that they are equivalent under reasonable conditions \cite[Proposition 2.1]{Jenkinson}. Our Proposition \ref{NC Jenkinson extrema} is an attempt to extend some part of this result to the noncommutative and relative setting.

\begin{Prop}\label{NC Jenkinson extrema}
The quantities $\overline{d}_{\mathbf{F}, S}(x) , \underline{d}_{\mathbf{F}, S}(x)$ are well-defined when $S \subseteq \mathcal{S}^G$ is compact, convex, and $\Theta$-invariant. Moreover, they satisfy
\begin{align*}
\overline{a}_{\mathbf{F}, S}(x)	& = \overline{d}_{\mathbf{F}, S}(x) ,	& \underline{a}_{\mathbf{F}, S}(x)	& = \underline{d}_{\mathbf{F}, S}(x) .
\end{align*}
\end{Prop}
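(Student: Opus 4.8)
The plan is to exploit the fact that restricting the competing states to $S \subseteq \mathcal{S}^G$ renders the \Folner{} averaging inert, so that both pairs of quantities collapse onto $\sup_{\phi \in S} \phi(x)$ and $\inf_{\phi \in S} \phi(x)$ respectively. The observation I would record first, and which drives the entire argument, is that since every $\phi \in S$ is $\Theta$-invariant, we have $\phi(\operatorname{Avg}_{F_k} x) = \frac{1}{|F_k|} \sum_{g \in F_k} \phi(\Theta_g x) = \phi(x)$ for every $k \in \mathbb{N}$ and every $x \in \mathfrak{R}$; equivalently, $\phi \circ \operatorname{Avg}_{F_k} = \phi$ as functionals.

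For the well-definedness of $\overline{d}_{\mathbf{F}, S}(x)$ and $\underline{d}_{\mathbf{F}, S}(x)$, I would note that this identity makes the sequences $\sup_{\phi \in S} \phi(\operatorname{Avg}_{F_k} x)$ and $\inf_{\phi \in S} \phi(\operatorname{Avg}_{F_k} x)$ constant in $k$, equal respectively to $\sup_{\phi \in S}\phi(x)$ and $\inf_{\phi \in S}\phi(x)$ (both finite, since $|\phi(x)| \leq \|x\|$ on the weak*-compact set $S$). Hence the defining limits exist trivially, and $\overline{d}_{\mathbf{F}, S}(x) = \sup_{\phi \in S}\phi(x)$, $\underline{d}_{\mathbf{F}, S}(x) = \inf_{\phi \in S}\phi(x)$.

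For the $a$-quantities, I would establish $\mathscr{P}^\mathbf{F}(S) = S$. If $\psi$ is a weak*-limit point of $\left( \phi_k \circ \operatorname{Avg}_{F_k} \right)_{k=1}^\infty$ with each $\phi_k \in S$, then by the identity above this sequence is literally $(\phi_k)_{k=1}^\infty$, a sequence in $S$; since $S$ is weak*-compact, hence weak*-closed, all of its limit points lie in $S$, so $\psi \in S$. Conversely, each $\phi \in S$ arises from the constant sequence, giving $S \subseteq \mathscr{P}^\mathbf{F}(S)$. This recovers the previously stated equality $\mathscr{P}^\mathbf{F}(S) = \overline{S} = S$ without even invoking convexity, which is only needed in the general non-invariant case. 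Consequently $\overline{a}_{\mathbf{F}, S}(x) = \sup_{\psi \in S}\psi(x) = \overline{d}_{\mathbf{F}, S}(x)$ and $\underline{a}_{\mathbf{F}, S}(x) = \inf_{\psi \in S}\psi(x) = \underline{d}_{\mathbf{F}, S}(x)$, as claimed.

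I do not expect a genuinely hard step here: the only point to appreciate is that the hypothesis $S \subseteq \mathcal{S}^G$ places us in the degenerate baseline case where averaging fixes every competitor, so the proposition functions as a consistency check that the two formalizations of ``extremal ergodic average'' agree on invariant competitor sets. The substantive content of a Jenkinson-type equivalence — where $S$ consists of states that need \emph{not} be $\Theta$-invariant, so that $\operatorname{Avg}_{F_k}$ genuinely moves the functionals and the limit points in $\mathscr{P}^\mathbf{F}(S)$ must be produced via Lemma \ref{K-B} — lies outside the scope of this statement, and is precisely where the amenability machinery would be doing real work.
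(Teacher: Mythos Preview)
Your proof is correct, and it is genuinely different from the paper's. You exploit the stated hypothesis $S \subseteq \mathcal{S}^G$ directly: since every competitor $\phi \in S$ is already $\Theta$-invariant, the identity $\phi \circ \operatorname{Avg}_{F_k} = \phi$ collapses both the $d$-quantities (the inner sequences are constant in $k$) and the set $\mathscr{P}^\mathbf{F}(S)$ (it is just the closure of $S$, hence $S$ itself by compactness) onto $\sup_{\phi \in S}\phi(x)$ and $\inf_{\phi \in S}\phi(x)$. As you remark, neither convexity nor $\Theta$-invariance of $S$ as a set is actually used in your argument; only $S \subseteq \mathcal{S}^G$ and compactness matter.

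The paper argues differently: it picks a near-maximizing sequence $(\phi_k)_k$ with $\phi_k(\operatorname{Avg}_{F_k} x)$ within $1/k$ of the supremum, extracts a weak*-limit $\psi \in \mathscr{P}^\mathbf{F}(S) = \overline{S}$, and then sandwiches the $\limsup$ and $\liminf$ against $\overline{a}_{\mathbf{F},S}(x)$ using the $\Theta$-invariance of the limit point (via Lemma~\ref{K-B}) rather than of the $\phi_k$ themselves. That argument never uses $S \subseteq \mathcal{S}^G$ per se and would go through verbatim for $S \subseteq \mathcal{S}$ compact, convex, and $\Theta$-invariant, which is likely the intended generality (the redundant hypotheses you flagged are a symptom of this). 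So your approach is cleaner for the statement exactly as written, while the paper's approach is what one would need if the hypothesis were relaxed to $S \subseteq \mathcal{S}$.
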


\begin{proof}
We'll prove that $\overline{a}_{\mathbf{F}, S}(x) = \overline{d}_{\mathbf{F}, S}(x)$, as the proof that $\underline{a}_{\mathbf{F}, S}(x) = \underline{d}_{\mathbf{F}, S}(x)$ is very similar. We know a priori that $\mathscr{P}^\mathbf{F}(S) = \overline{S}$.

Let $(\phi_k)_{k = 1}^\infty$ be a sequence in $S$ such that for each $k \in \mathbb{N}$, we have
$$\sup \left\{ \phi \left( \operatorname{Avg}_{F_k} x \right) : \phi \in S \right\} - 1 / k \leq \phi_k (\operatorname{Avg}_{F_k} x) \leq \sup \left\{ \phi \left( \operatorname{Avg}_{F_k} x \right) : \phi \in S \right\} .$$
We know that any limit point of $\left( \phi_k \circ \operatorname{Avg}_{F_k} \right)_{k = 1}^\infty$ is in $\overline{S}$. Let $k_1 < k_2 < \cdots$ be chosen such that $\lim_{\ell \to \infty} \phi_{k_\ell} \left( \operatorname{Avg}_{F_{k_\ell}} x \right) = \limsup_{k \to \infty} \phi_k \left( \operatorname{Avg}_{F_k} x \right)$. We can assume that $\left( \phi_{k_\ell} \circ \operatorname{Avg}_{F_{k_\ell}} \right)_{\ell = 1}^\infty$ is weak*-convergent to a state $\psi \in \overline{S}$, passing to a subsequence if necessary. Then
$$\limsup_{k \to \infty} \phi_k \left( \operatorname{Avg}_{F_k} x \right) = \lim_{\ell \to \infty} \phi_{k_\ell} \left( \operatorname{Avg}_{F_{k_\ell}} x \right) = \psi(x) \leq \overline{a}_{\mathbf{F}, S}(x) .$$

Assume for contradiction that $\liminf_{k \to \infty} \phi_k \left( \operatorname{Avg}_{F_k} x \right) < \overline{a}_{\mathbf{F}, S}(x)$. Let $\psi' \in \mathscr{P}^\mathbf{F}(S)$ be such that $\psi'(x) > \liminf_{k \to \infty} \phi_k \left( \operatorname{Avg}_{F_k} x \right)$. Then
\begin{align*}
\psi'(x)	& = \psi' \left( \operatorname{Avg}_{F_k} x \right)	& \leq \phi_k \left( \operatorname{Avg}_{F_k} x \right) - 1 / k .
\end{align*}
Let $k_1' < k_2' < \cdots$ such that $\left( \phi_{k_\ell'} \left( \operatorname{Avg}_{F_{k_\ell}'} x \right) \right)_{\ell = 1}^\infty$ converges to $\liminf_{k \to \infty} \phi_k \left( \operatorname{Avg}_{F_k} x \right)$. Then
\begin{align*}
\psi'(x)	& \leq \lim_{\ell \to \infty} \phi_{k_\ell'} \left( \operatorname{Avg}_{F_{k_\ell'}} x \right)	& = \liminf_{k \to \infty} \phi_k \left( \operatorname{Avg}_{F_k} x \right)	& < \psi(x) ,
\end{align*}
a contradiction. Therefore we conclude that $\liminf_{k \to \infty} \phi_k \left( \operatorname{Avg}_{F_k} x \right) \geq \overline{a}_{\mathbf{F}, S}(x)$. Thus
$$\overline{a}_{\mathbf{F}, S}(x) \leq \liminf_{k \to \infty} \phi_k \left( \operatorname{Avg}_{F_k} x \right) \leq \limsup_{k \to \infty} \phi_k \left( \operatorname{Avg}_{F_k} x \right) \leq \overline{a}_{\mathbf{F}, S}(x) .$$
Thus we can conclude that $\overline{d}_{\mathbf{F}, S}(x)$ is well-defined and equal to $\overline{d}_{\mathbf{F}, S}(x)$.
\end{proof}

\begin{Rmk}
An alternate proof of Proposition \ref{NC Jenkinson extrema} using nonstandard analysis is presented in Section \ref{NSA}.
\end{Rmk}

To our knowledge, the first result like Theorem \ref{Relative Herman} is \cite[Lemme on pg. 487]{Herman}. Herman's result can be understood as an extension of the classical result that a topological dynamical system is uniquely ergodic if and only if the ergodic averages of all continuous functions converge uniformly to a constant. To our knowledge, the first record of this classical result is \cite[(5.3)]{OxtobyErgodic}. If Oxtoby's result can be understood as relating the uniform convergence properties of ergodic averages of \emph{all} continuous functions to the ergodic optimization of all continuous functions, then Herman's result relates the uniform convergence properties of ergodic averages of a \emph{single} continuous function to its ergodic optimization. Our result extends Herman's in a few directions. First, it extends Herman's result to the setting of actions of amenable groups other than $\mathbb{Z}$. Moreover, it extends the result to C*-dynamical systems. Finally, it allows us to relate convergence in certain seminorms to relative ergodic optimizations.

Let $(\mathfrak{A}, G, \Theta)$ be a C*-dynamical system, where $G$ is an amenable group. Given a nonempty subset $S$ of $\mathcal{S}$, define the seminorm $\| \cdot \|_S$ on $\mathfrak{A}$ by
$$\|x\|_S : = \sup_{\phi \in S} \left| \psi(x) \right| .$$

\begin{Thm}\label{Relative Herman}
Let $\mathbf{F}$ be a right Følner sequence for $G$, and $S \subseteq \mathcal{S}$. Let $x \in \mathfrak{R}$, and $\lambda \in \mathbb{R}$. Then the following are equivalent.
	\begin{enumerate}[label=(\roman*)]
		\item $\left\{ \psi(x) : \psi \in \mathscr{P}^\mathbf{F}(S) \right\} = \{\lambda\}$.
		\item $\lim_{k \to \infty} \left\| \operatorname{Avg}_{F_k} x - \lambda \right\|_S = 0$.
	\end{enumerate}
\end{Thm}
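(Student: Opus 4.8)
The plan is to prove the two implications separately, after recording the elementary observation that for any state $\phi$ and any $k$ we have $\phi(\operatorname{Avg}_{F_k} x - \lambda) = (\phi \circ \operatorname{Avg}_{F_k})(x) - \lambda$ (interpreting $\lambda$ as $\lambda 1$ and using $\phi(1) = 1$). Thus condition (ii) is precisely the statement that $(\phi \circ \operatorname{Avg}_{F_k})(x) \to \lambda$ \emph{uniformly} over $\phi \in S$. Throughout I would use that $\mathcal{S}$ is compact and metrizable (Lemma \ref{Simplices}(i)), so that limit points coincide with subsequential limits, and that for nonempty $S$ the set $\mathscr{P}^\mathbf{F}(S)$ is a nonempty compact subset of $\mathcal{S}^G$ (Lemma \ref{K-B}). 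I would assume $S \neq \emptyset$, as the surrounding framework does.

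For (ii)$\Rightarrow$(i), let $\psi \in \mathscr{P}^\mathbf{F}(S)$, so there are states $\phi_k \in S$ and indices $k_1 < k_2 < \cdots$ with $\phi_{k_\ell} \circ \operatorname{Avg}_{F_{k_\ell}} \to \psi$ in the weak*-topology. Evaluating at $x$ gives $\psi(x) = \lim_{\ell \to \infty} \phi_{k_\ell}(\operatorname{Avg}_{F_{k_\ell}} x)$, and since $|\phi_{k_\ell}(\operatorname{Avg}_{F_{k_\ell}} x) - \lambda| \leq \|\operatorname{Avg}_{F_{k_\ell}} x - \lambda\|_S \to 0$ by hypothesis, I conclude $\psi(x) = \lambda$. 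Because $\mathscr{P}^\mathbf{F}(S)$ is nonempty, the set appearing in (i) is then exactly $\{\lambda\}$.

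For (i)$\Rightarrow$(ii) I would argue by contraposition. If (ii) fails, fix $\epsilon_0 > 0$ and a subsequence $k_1 < k_2 < \cdots$ with $\|\operatorname{Avg}_{F_{k_n}} x - \lambda\|_S \geq \epsilon_0$ for all $n$, and by the definition of the seminorm choose $\phi_n \in S$ with $|\phi_n(\operatorname{Avg}_{F_{k_n}} x) - \lambda| \geq \epsilon_0/2$. The one genuinely delicate point is that $\mathscr{P}^\mathbf{F}(S)$ is built from sequences indexed by the \emph{full} Følner sequence, so this data must be reinserted into such a sequence: define $\phi_k := \phi_n$ when $k = k_n$, and $\phi_k := \phi_0$ for a fixed $\phi_0 \in S$ otherwise. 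Then $(\phi_n \circ \operatorname{Avg}_{F_{k_n}})_n$ is a subsequence of $(\phi_k \circ \operatorname{Avg}_{F_k})_k$, and by weak*-compactness it admits a sub-subsequence converging to some $\psi \in \mathscr{P}^\mathbf{F}(S)$. Passing the inequality $|\phi_n(\operatorname{Avg}_{F_{k_n}} x) - \lambda| \geq \epsilon_0/2$ to the limit along this sub-subsequence yields $|\psi(x) - \lambda| \geq \epsilon_0/2 > 0$, so $\psi(x) \neq \lambda$ and (i) fails.

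The main obstacle is exactly this bookkeeping in the second implication: since $\mathscr{P}^\mathbf{F}(S)$ is defined through the full sequence $(F_k)_k$, a witness extracted along a subsequence of the Følner sequence must be embedded back into a full sequence before Lemma \ref{K-B} can be invoked to guarantee that the resulting limit lies in $\mathscr{P}^\mathbf{F}(S) \subseteq \mathcal{S}^G$. Everything else reduces to a routine compactness and uniform-convergence argument.
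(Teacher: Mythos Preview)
Your proposal is correct and follows essentially the same strategy as the paper: direct estimation for (ii)$\Rightarrow$(i), and a contrapositive compactness argument for (i)$\Rightarrow$(ii). The one minor difference is that the paper sidesteps the ``reinsertion'' bookkeeping you flag as the main obstacle: rather than choosing witnesses only along the bad subsequence and then extending, the paper chooses $\phi_k \in S$ with $|\phi_k(\operatorname{Avg}_{F_k} x - \lambda)| \geq \tfrac{1}{2}\|\operatorname{Avg}_{F_k} x - \lambda\|_S$ for \emph{every} $k$ from the outset, and only afterwards restricts to the bad subsequence $(k_\ell)$. This makes the limit point automatically lie in $\mathscr{P}^\mathbf{F}(S)$ without any patching, but the content of the argument is the same.
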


\begin{proof}
	(i)$\Rightarrow$(ii): We prove the contrapositive. Suppose there exists $\epsilon_0 > 0$ and $k_1 < k_2 < \cdots$ such that
	\begin{align*}
		\left\| \operatorname{Avg}_{F_{k_\ell}} x - \lambda \right\|_S	& > \epsilon_0	& (\forall \ell \in \mathbb{N}) .
	\end{align*}
	For each $k \in \mathbb{N}$, choose $\phi_k \in S$ such that $\left| \phi_k \left( \operatorname{Avg}_{F_k} x - \lambda \right) \right| \geq \frac{1}{2} \left\| \operatorname{Avg}_{F_k} x - \lambda \right\|_S$. Then in particular we know that
	\begin{align*}
		\left| \phi_{k_\ell} \left( \operatorname{Avg}_{F_{k_\ell}} x - \lambda \right) \right|	& > \epsilon_0 / 2	& (\forall \ell \in \mathbb{N}) .
	\end{align*}
	By the weak*-compactness of $\mathcal{S}$, there must exist a weak*-convergent subsequence of \linebreak$\left( \phi_{k_\ell} \circ \operatorname{Avg}_{k_\ell} \right)_{\ell = 1}^\infty$. Assume without loss of generality that $\left( \phi_{k_\ell} \circ \operatorname{Avg}_{k_\ell} \right)_{\ell = 1}^\infty$ converges in the weak* topology, and write $\psi = \lim_{\ell \to \infty} \phi_{k_\ell} \circ \operatorname{Avg}_{k_\ell}$. Then
	\begin{align*}
		|\psi(x - \lambda)|	& = \left| \lim_{\ell \to \infty} \phi_{k_\ell} \left( \operatorname{Avg}_{F_{k_\ell}} x - \lambda \right) \right| \\
		& = \lim_{\ell \to \infty} \left| \phi_{k_\ell} \left( \operatorname{Avg}_{F_{k_\ell}} x - \lambda \right) \right| \\
		& \geq \epsilon_0 / 2 .
	\end{align*}
	Therefore $\psi(x) \neq \lambda$, meaning that $\left\{ \psi(x) : \psi \in \mathscr{P}^\mathbf{F}(S) \right\} \neq \{\lambda\}$.
	
	(ii)$\Rightarrow$(i): Suppose that $\lim_{k \to \infty} \left\| \operatorname{Avg}_{F_k} x - \lambda \right\|_S = 0$. Let $(\phi_k)_{k = 1}^\infty$ be a sequence in $S$, and let $\left( \phi_{k_\ell} \circ \operatorname{Avg}_{F_{k_\ell}} \right)_{\ell = 1}^\infty$ be a weak*-convergent subsequence of $\left( \phi_k \circ \operatorname{Avg}_{F_k} \right)_{k = 1}^\infty$ with limit $\psi$. Then
	\begin{align*}
		\left| \psi(x - \lambda) \right|	& = \left| \lim_{\ell \to \infty} \phi_{k_\ell} \left( \operatorname{Avg}_{F_{k_\ell}} x - \lambda \right) \right| \\
		& = \lim_{\ell \to \infty} \left| \phi_{k_\ell} \left( \operatorname{Avg}_{F_{k_\ell}} x - \lambda \right) \right| \\
			& \leq \limsup_{\ell \to \infty} \left\| \operatorname{Avg}_{F_k} x - \lambda \right\|_S \\
		& = 0 .
	\end{align*}
	Therefore $\left\{ \psi(x) : \psi \in \mathscr{P}^\mathbf{F}(S) \right\} = \{\lambda\}$.
\end{proof}

\begin{Rmk}
An alternate proof of Theorem \ref{Relative Herman} using nonstandard analysis is presented in Section \ref{NSA}.
\end{Rmk}

\begin{Cor}\label{NC Global Herman}
Let $\mathbf{F}$ be a right Følner sequence for $G$. Let $x \in \mathfrak{R}$, and $\lambda \in \mathbb{R}$. Then the following are equivalent.
\begin{enumerate}[label=(\roman*)]
	\item $\left\{ \psi(x) : \psi \in \mathcal{S}^G \right\} = \{\lambda\}$.
	\item $\lim_{k \to \infty} \left\| \operatorname{Avg}_{F_k} x - \lambda \right\| = 0$.
\end{enumerate}
\end{Cor}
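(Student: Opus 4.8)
The plan is to derive this corollary directly from Theorem \ref{Relative Herman} by specializing the subset $S$ to the full state space $\mathcal{S}$. The two things I would need to check are that, for this choice of $S$, condition (i) of Theorem \ref{Relative Herman} reduces to condition (i) here, and that the seminorm $\|\cdot\|_S$ reduces to the genuine C*-norm on the elements that appear in condition (ii).

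For the first point, I would invoke the observation recorded just before Proposition \ref{NC Jenkinson extrema}: since $\mathcal{S} \supseteq \mathcal{S}^G$, we have $\mathscr{P}^\mathbf{F}(\mathcal{S}) = \mathcal{S}^G$. Concretely, Lemma \ref{K-B} gives the inclusion $\mathscr{P}^\mathbf{F}(\mathcal{S}) \subseteq \mathcal{S}^G$, while the reverse inclusion holds because every $\psi \in \mathcal{S}^G$ is a fixed point of each $\operatorname{Avg}_{F_k}$: by $\Theta$-invariance, $\psi \left( \operatorname{Avg}_{F_k} x \right) = \frac{1}{|F_k|} \sum_{g \in F_k} \psi(\Theta_g x) = \psi(x)$, so $\psi \circ \operatorname{Avg}_{F_k} = \psi$, and the constant sequence exhibits $\psi$ as a limit point. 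Hence $\left\{ \psi(x) : \psi \in \mathscr{P}^\mathbf{F}(\mathcal{S}) \right\} = \left\{ \psi(x) : \psi \in \mathcal{S}^G \right\}$, and condition (i) of Theorem \ref{Relative Herman} with $S = \mathcal{S}$ becomes exactly condition (i) here.

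For the second point, I would use the standard fact that states norm self-adjoint elements: for any self-adjoint $y \in \mathfrak{R}$ one has $\|y\| = \sup_{\phi \in \mathcal{S}} |\phi(y)| = \|y\|_{\mathcal{S}}$. Since $x \in \mathfrak{R}$ and $\lambda \in \mathbb{R}$, the element $\operatorname{Avg}_{F_k} x - \lambda$ is again self-adjoint, so $\left\| \operatorname{Avg}_{F_k} x - \lambda \right\|_{\mathcal{S}} = \left\| \operatorname{Avg}_{F_k} x - \lambda \right\|$, and condition (ii) of Theorem \ref{Relative Herman} with $S = \mathcal{S}$ becomes condition (ii) here. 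With these two identifications, the corollary is immediate from Theorem \ref{Relative Herman}.

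There is no serious obstacle in this argument; the only points requiring care are the two identifications above, and of these the identity $\mathscr{P}^\mathbf{F}(\mathcal{S}) = \mathcal{S}^G$ is the one I would state most explicitly, since it is precisely what collapses the relative ergodic-optimization statement of Theorem \ref{Relative Herman} down to the global statement involving all of $\mathcal{S}^G$.
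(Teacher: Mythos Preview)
Your proposal is correct and follows exactly the paper's approach: apply Theorem \ref{Relative Herman} with $S = \mathcal{S}$, noting that $\mathscr{P}^\mathbf{F}(\mathcal{S}) = \mathcal{S}^G$ and that $\|\cdot\|_{\mathcal{S}}$ agrees with the C*-norm on the self-adjoint elements in question. You have supplied more justification for these two identifications than the paper's one-line proof does, but the route is the same.
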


\begin{proof}
	Apply Theorem \ref{Relative Herman} in the case where $S = \mathcal{S}$, implying that $\|\cdot\|_S = \|\cdot\|$ and $\mathscr{P}^\mathbf{F}(S) = \mathcal{S}^G$.
\end{proof}

Corollary \ref{NC Global Herman} strengthens the noncommutative analogue of Oxtoby's characterization of unique ergodicity, as we see below.

\begin{Cor}[A noncommutative extension of Oxtoby's characterization of unique ergodicity]
Let $\mathbf{F} = (F_k)_{k = 1}^\infty$ be a right Følner sequence for $G$. A C*-dynamical system $(\mathfrak{A}, G, \Theta)$ is uniquely ergodic if and only if $\left(\operatorname{Avg}_{F_k} x \right)_{k = 1}^\infty$ converges in norm to an element of $\mathbb{C}1 \subseteq \mathfrak{A}$ for all $x \in \mathfrak{A}$.
\end{Cor}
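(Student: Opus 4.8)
The plan is to derive this corollary directly from Corollary \ref{NC Global Herman} after reducing from arbitrary elements of $\mathfrak{A}$ to self-adjoint ones. The content of Corollary \ref{NC Global Herman} is precisely that, for a self-adjoint $x \in \mathfrak{R}$ and a real scalar $\lambda$, the norm convergence $\operatorname{Avg}_{F_k} x \to \lambda$ is equivalent to every $\Theta$-invariant state assigning the common value $\lambda$ to $x$. So both implications amount to bookkeeping about how this per-element information assembles into a statement about $\mathcal{S}^G$.

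For the forward direction, I would assume $(\mathfrak{A}, G, \Theta)$ is uniquely ergodic, say $\mathcal{S}^G = \{\phi\}$. For each self-adjoint $x$ the set $\{\psi(x) : \psi \in \mathcal{S}^G\}$ is the singleton $\{\phi(x)\}$, and $\phi(x) \in \mathbb{R}$ since $\phi$ is a state and $x$ is self-adjoint; Corollary \ref{NC Global Herman} then gives $\operatorname{Avg}_{F_k} x \to \phi(x) 1$ in norm. For an arbitrary $x \in \mathfrak{A}$ I would write $x = x_1 + i x_2$ with $x_1 = \tfrac{1}{2}(x + x^*)$ and $x_2 = \tfrac{1}{2i}(x - x^*)$ self-adjoint, and pass to the limit in $\operatorname{Avg}_{F_k} x = \operatorname{Avg}_{F_k} x_1 + i \operatorname{Avg}_{F_k} x_2$ term by term, obtaining $\operatorname{Avg}_{F_k} x \to \phi(x) 1 \in \mathbb{C}1$.

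For the converse, I would assume $\operatorname{Avg}_{F_k} x$ converges in norm to an element of $\mathbb{C}1$ for every $x$. Restricting to self-adjoint $x$, the averages $\operatorname{Avg}_{F_k} x$ are self-adjoint (since each $\Theta_g$ is a $*$-automorphism), so their norm limit $\lambda_x 1$ is self-adjoint and hence $\lambda_x \in \mathbb{R}$. Corollary \ref{NC Global Herman} then yields $\{\psi(x) : \psi \in \mathcal{S}^G\} = \{\lambda_x\}$, i.e. every invariant state takes the value $\lambda_x$ on $x$. Since the self-adjoint elements span $\mathfrak{A}$ over $\mathbb{C}$ and states are linear, any two elements of $\mathcal{S}^G$ coincide, so $|\mathcal{S}^G| \leq 1$; combined with $\mathcal{S}^G \neq \emptyset$ (which holds because $G$ is amenable, by Lemma \ref{K-B} applied to $K = \mathcal{S}$), this gives $|\mathcal{S}^G| = 1$, i.e. unique ergodicity.

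The argument is short and essentially mechanical once Corollary \ref{NC Global Herman} is available; the only points requiring any care are the passage between general and self-adjoint elements and the observation that the limiting scalar attached to a self-adjoint element is automatically real, rather than any genuine analytic obstacle.
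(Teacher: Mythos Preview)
Your proposal is correct and follows essentially the same route as the paper: both directions rest on Corollary \ref{NC Global Herman} after reducing to self-adjoint elements. The only cosmetic difference is that the paper argues the converse by contrapositive (if two distinct invariant states exist, pick a self-adjoint $y$ on which they disagree and invoke Corollary \ref{NC Global Herman} to see the averages of $y$ cannot converge to a scalar), whereas you argue it directly; your version is slightly more explicit in noting that the limiting scalar for a self-adjoint element must be real and in invoking $\mathcal{S}^G \neq \emptyset$.
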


\begin{proof}
$(\Rightarrow)$: By taking real and imaginary parts, we can reduce to the case where $x$ is self-adjoint. If $(\mathfrak{A}, G, \Theta)$ is uniquely ergodic, then $\left\{ \psi(x) : \psi \in \mathcal{S}^G \right\}$ is singleton, so by Corollary \ref{NC Global Herman} the averages will converge to a scalar.

$(\Leftarrow)$: Conversely, if $(\mathfrak{A}, G, \Theta)$ is \emph{not} uniquely ergodic, then there exist two states $\psi_1, \psi_2 \in \mathcal{S}^G$ for which there exists $y \in \mathfrak{R}$ such that $\psi_1(y) \neq \psi_2(y)$, implying that $\left\{ \psi(y) : \psi \in \mathcal{S}^G \right\}$ is not singleton. Corollary \ref{NC Global Herman} then tells us that $\left(\operatorname{Avg}_{F_k} x \right)_{k = 1}^\infty$ doesn't converge in norm.
\end{proof}

\section{Applications of nonstandard analysis to noncommutative ergodic optimization}\label{NSA}

The tools of nonstandard analysis can be used to provide alternate proofs of some results in this article. In this section, we assume that the reader is familiar with the basic tools and vocabulary of nonstandard analysis. See \cite{Goldblatt} for references. Since some of the terminology of the field is not entirely universal, we define some of the less universal terms here.

We will assume throughout this section that $(\mathfrak{A}, G, \Theta)$ is a C*-dynamical system, and that $\mathfrak{U}$ is a universe that contains $\mathfrak{A}, G, \mathbb{C}$. Assume that $* : \mathfrak{U} \mapsto \mathfrak{U}'$ is a countably saturated universe embedding. We say that $x \in \prescript{*}{}{\mathbb{C}}$ is \emph{unlimited} if $|x| > n$ for all $n \in \mathbb{N}$, and \emph{limited} otherwise. Let $\mathbb{L} = \bigcup_{n \in \mathbb{N}} \left\{ z \in \prescript{*}{}{\mathbb{C}} : \|z\| \leq n \right\}$ denote the external ring of limited elements of $\prescript{*}{}{\mathbb{C}}$. For $z, w \in \prescript{*}{}{\mathbb{C}}$, we write $z \simeq w$ if $|z - w| < 1 / n$ for all $n \in \mathbb{N}$. This $\simeq$ is an equivalence relation on $\prescript{*}{}{\mathbb{C}}$. We define the \emph{shadow} $\operatorname{sh} : \mathbb{L} \twoheadrightarrow \mathbb{C}$ to be the $\mathbb{C}$-linear functional mapping $z \in \mathbb{L}$ to the unique (standard) complex number $w \in \mathbb{C}$ for which $z \simeq w$. The shadow is also order-preserving on $\mathbb{L} \cap \prescript{*}{}{\mathbb{R}}$. Let $\prescript{*}{}{\mathbb{N}}_\infty : = \left\{ K \in \prescript{*}{}{\mathbb{N}} : \forall k \in \mathbb{N} \; (K \geq k) \right\} = \prescript{*}{}{\mathbb{N}} \setminus \mathbb{N}$ denote the unlimited hypernaturals.

We have the following nonstandard analogue of Lemma \ref{K-B}.

\begin{Lem}\label{Nonstandard K-B}
Let $(\mathfrak{A}, G, \Theta)$ be a C*-dynamical system, and let $G$ be an amenable group. Consider a sequence in $(\phi_k)_{k = 1}^\infty$ in $\mathcal{S}$, and a right Følner sequence $\mathbf{F} = (F_k)_{k = 1}^\infty$ for $G$. Let $K \in \prescript{*}{}{\mathbb{N}}_\infty$ be an unlimited hypernatural, and define a state $\omega : \mathfrak{A} \to \mathbb{C}$ by
$$\omega(x) = \operatorname{sh} \left( \prescript{*}{}{\phi_K \left( \operatorname{Avg}_{F_K} x \right)} \right) .$$
Then $\omega$ is a well-defined $\Theta$-invariant state, and is a limit point of the sequence $\left( \phi_k \circ \operatorname{Avg}_{F_k} \right)_{k = 1}^\infty$.
\end{Lem}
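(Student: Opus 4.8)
The plan is to verify in turn that $\omega$ is well-defined and bounded, that it satisfies the state axioms, that it is $\Theta$-invariant, and finally that it is a weak*-limit point of the given sequence; in each case I would transfer a standard fact about the maps $\phi_k \circ \operatorname{Avg}_{F_k}$ and then pass to the shadow.

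First I would dispatch well-definedness and the state axioms. For each standard $x \in \mathfrak{A}$ we have $\left| \phi_k\!\left( \operatorname{Avg}_{F_k} x \right) \right| \le \left\| \operatorname{Avg}_{F_k} x \right\| \le \|x\|$ for every $k$, so by transfer ${}^*\phi_K\!\left( \operatorname{Avg}_{F_K} {}^*x \right)$ is limited and $\omega(x)$ is a well-defined complex number with $|\omega(x)| \le \|x\|$. Linearity of $\omega$ follows from the (transferred) internal $\mathbb{C}$-linearity of $\phi_K \circ \operatorname{Avg}_{F_K}$ together with the $\mathbb{C}$-linearity of $\operatorname{sh}$ on $\mathbb{L}$; positivity follows because $x \ge 0$ forces $\operatorname{Avg}_{F_k} x \ge 0$ (a convex combination of the positive elements $\Theta_g x$), so by transfer ${}^*\phi_K\!\left( \operatorname{Avg}_{F_K} {}^*x \right) \ge 0$ and $\operatorname{sh}$ is order-preserving on $\mathbb{L} \cap {}^*\mathbb{R}$; and $\omega(1) = 1$ because $\operatorname{Avg}_{F_k} 1 = 1$ for all $k$, whence ${}^*\phi_K\!\left( \operatorname{Avg}_{F_K} 1 \right) = 1$.

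The main step will be $\Theta$-invariance. As in the proof of Lemma \ref{K-B}, re-indexing $\operatorname{Avg}_{F_k} \Theta_{g_0} x = \frac{1}{|F_k|} \sum_{h \in F_k g_0} \Theta_h x$ yields, for every standard $g_0 \in G$, $x \in \mathfrak{A}$, and every $k$, the estimate
\[
\left| \phi_k\!\left( \operatorname{Avg}_{F_k} \Theta_{g_0} x \right) - \phi_k\!\left( \operatorname{Avg}_{F_k} x \right) \right| \le \frac{\left| F_k g_0 \,\Delta\, F_k \right|}{|F_k|}\,\|x\| .
\]
Transferring this to the unlimited index $K$ bounds the corresponding difference by $\frac{\left| {}^*F_K g_0 \,\Delta\, {}^*F_K \right|}{\left| {}^*F_K \right|}\,\|x\|$. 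Since $\mathbf{F}$ is a \emph{right} Følner sequence, $\frac{\left| F_k g_0 \Delta F_k \right|}{|F_k|} \to 0$, so by the nonstandard characterization of convergence the quantity $\frac{\left| {}^*F_K g_0 \Delta {}^*F_K \right|}{\left| {}^*F_K \right|}$ is infinitesimal for every unlimited $K$. Hence the two internal values are infinitely close and share a shadow, giving $\omega(\Theta_{g_0} x) = \omega(x)$ for all standard $g_0, x$.

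Finally, for the limit-point claim, note that by construction ${}^*\phi_K\!\left( \operatorname{Avg}_{F_K} {}^*x \right) \simeq \omega(x)$ for every standard $x$, which says precisely that the internal state ${}^*\!\left( \phi_K \circ \operatorname{Avg}_{F_K} \right)$ lies in the weak*-monad of $\omega$. I would then invoke the standard nonstandard criterion for cluster points (see \cite{Goldblatt}) — that a standard point is a cluster point of a standard sequence iff some term of unlimited index lies in its monad — and recall from Lemma \ref{Simplices} that $\mathcal{S}$ is compact metrizable, so that cluster points coincide with subsequential limits; this gives that $\omega$ is a limit point of $\left( \phi_k \circ \operatorname{Avg}_{F_k} \right)_{k = 1}^\infty$. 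The only genuine obstacle is the invariance step, and specifically taking care that it is the right Følner hypothesis (matched to the appearance of $F_k g_0$) that forces the averaged symmetric-difference ratio to be infinitesimal at unlimited indices.
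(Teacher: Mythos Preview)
Your proof is correct and follows essentially the same approach as the paper: both verify well-definedness, the state axioms, and $\Theta$-invariance by transferring the obvious standard inequalities and the Følner estimate from Lemma \ref{K-B}, then taking shadows. The only minor difference is in the limit-point step: the paper writes out the transfer argument explicitly (formulating sentences $\sigma_{x_1,\ldots,x_\ell;n,k_0}$ asserting the existence of arbitrarily large indices landing in basic weak*-neighborhoods of $\omega$, witnessed by $K$), whereas you invoke the general nonstandard cluster-point criterion as a black box together with metrizability of $\mathcal{S}$; these are equivalent formulations of the same idea.
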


\begin{proof}
First, we take up the well-definedness of $\omega$. If $x \in \mathfrak{A}$, then
$$\forall k \in \mathbb{N} \; \left( \left| \phi_k \left( \operatorname{Avg}_{F_k} x \right) \right| \leq \|x\| \right) ,$$
and so by the Transfer Principle
$$\forall k \in \prescript{*}{}{\mathbb{N}} \; \left( \left| \prescript{*}{}{\phi}_k \left( \operatorname{Avg}_{F_k} x \right) \right| \leq \|x\| \right) .$$
In particular, it follows that $\left| \prescript{*}{}{\phi_K \left( \operatorname{Avg}_{F_K} x \right)} \right| \leq \|x\|$, meaning that $\prescript{*}{}{\phi_K \left( \operatorname{Avg}_{F_K} x \right)} \in \mathbb{L}$. Thus $\omega(x)$ is well-defined. We can similarly prove that $\omega$ is positive and unital by applying the Transfer Principle to the sentences
\begin{align*}
\forall k \in \mathbb{N} \; \forall x \in \mathfrak{A} \;	& \left( \phi_k \left( \operatorname{Avg}_{F_k} \left( x^* x \right) \right) \geq 0 \right) , \\
\forall k \in \mathbb{N} \;	& \left( \phi_k \left( \operatorname{Avg}_{F_k} 1 \right) = 1 \right) .
\end{align*}
To prove the $\Theta$-invariance of $\omega$, we recall from a familiar argument (see proof of Lemma \ref{K-B}) that if $g_0 \in G, x \in \mathfrak{A}$, then
$$\left| \phi_k \left( \operatorname{Avg}_{F_k} \Theta_{g_0} x \right) - \phi_k \left( \operatorname{Avg}_{F_k} x \right) \right| \leq \frac{|F_k g_0 \Delta F_k|}{|F_k|} \left\| x \right\| \stackrel{k \to \infty}{\to} 0 .$$
It follows from a classical result of nonstandard analysis \cite[Theorem 6.1.1]{Goldblatt} that \linebreak$\left| \prescript{*}{}{\phi}_K \left( \operatorname{Avg}_{F_K} \Theta_{g_0} x \right) - \prescript{*}{}{\phi}_K \left( \operatorname{Avg}_{F_K} x \right) \right| \leq \frac{\left| F_K g_0 \Delta F_K \right|}{ \left| F_K \right| } \|x\| \simeq 0$, meaning that $\omega(x) = \omega \left(\Theta_{g_0} x\right)$.

Finally, we argue that $\omega$ is a limit point of $\left( \phi_k \circ \operatorname{Avg}_{F_k} \right)_{k = 1}^\infty$. For $n, \ell, k_0 \in \mathbb{N}; x_1, \ldots, x_\ell \in \mathfrak{A}$, consider the sentence $\sigma_{x_1, \ldots, x_\ell; n, k_0}$ given by
\begin{align*}
\exists k \in \mathbb{N} \;	& \left[ (k \geq k_0) \land \left( \min_{1 \leq j \leq \ell} \left| \omega(x_j) - \phi_k \left( \operatorname{Avg}_{F_k} x_j \right) \right| < 1 / n \right) \right] .
\end{align*}
Then $\prescript{*}{}{\sigma}_{x_1, \ldots, x_\ell; n, k_0}$ is true for all $n, \ell, k_0 \in \mathbb{N}; x_1, \ldots, x_\ell \in \mathfrak{A}$, witnessed by $K$. Therefore, it follows from the Transfer Principle that $\sigma_{x_1, \ldots, x_\ell; n, k_0}$ is true for all $n, \ell, k_0 \in \mathbb{N}; x_1, \ldots, x_\ell \in \mathfrak{A}$. We know that
$$\left\{ \left\{ \psi \in \mathcal{S} : \min_{1 \leq j \leq \ell} |\omega(x_j) - \psi(x_j)| < 1 / n \right\} : n, \ell \in \mathbb{N}; x_1, \ldots, x_\ell \in \mathfrak{A} \right\}$$
is a neighborhood basis for $\omega$ in the weak* topology. Thus we have shown that $\omega$ is a limit point of the sequence $\left( \phi_k \circ \operatorname{Avg}_{F_k} \right)_{k = 1}^\infty$.
\end{proof}

We might ask whether Lemma \ref{Nonstandard K-B} is strictly weaker than Lemma \ref{K-B}, since Lemma \ref{Nonstandard K-B} also asserts that the state it describes is a limit point of the sequence that generates it. In fact, the two lemmas are equivalent in the sense that for a sequence $(\phi_k)_{k = 1}^\infty$ in $\mathcal{S}$, every limit point of the sequence $\left( \phi_k \circ \operatorname{Avg}_{F_k} \right)_{k = 1}^\infty$ can be written as $\operatorname{sh} \left( \prescript{*}{}{\phi_K \left( \operatorname{Avg}_{F_K} x \right)} \right)$ for some $K \in \prescript{*}{}{\mathbb{N}}_\infty$. To see this, choose $k_1 < k_2 < \cdots$ such that $\psi = \lim_{\ell \to \infty} \phi_{k_\ell} \circ \operatorname{Avg}_{F_{k_\ell}}$ exists. Let $\mathcal{N}$ be a countable neighborhood basis for $\psi$ in the weak*-topology, and for each $U \in \mathcal{N}, k \in \mathbb{N}$, let $S_{U, k}$ be the set
$$
S_{U, k} = \left\{ k' \in \mathbb{N} : \left( k' \geq k \right) \land \left( \phi_{k'} \circ \operatorname{Avg}_{F_{k'}} \in U \right) \right\} .
$$
Then $\left\{ S_{U, k} \right\}_{U \in \mathcal{N}, k \in \mathbb{N}}$ has the finite intersection property, and so by the countable saturation of our universe embedding, it follows that there exists $K \in \prescript{*}{}{\mathbb{N}}$ such that
$$K \in \bigcap_{U \in \mathcal{N}, k \in \mathbb{N}} \prescript{*}{}{S}_{U, k} ,$$
which is necessarily unlimited. Then for any $x \in \mathfrak{A}$, we have that $\left| \prescript{*}{}{\phi}_K \left( \operatorname{Avg}_{F_K} x \right) - \psi(x) \right| < 1 / n$ for all $n \in \mathbb{N}$, so
$$\operatorname{sh} \left( \prescript{*}{}{\phi}_K \left( \operatorname{Avg}_{F_K} x \right) \right) = \psi(x) .$$

This correspondence can be generalized in the following result.

\begin{Prop}\label{Unlimited extensions and compactness arguments}
Let $\Omega = (\Omega, \tau)$ be a compact Hausdorff topological space, and let $* : \mathfrak{U} \to \mathfrak{U}'$ be a countably saturated extension of a universe $\mathfrak{U}$ containing $\Omega$ and $\mathbb{N}$. Let $\simeq$ be the equivalence relation on $\prescript{*}{}{\Omega}$ defined by
\begin{align*}
	x \simeq y	& &	&\iff	&	& \forall U \in \tau \; \left( x \in \prescript{*}{}{U} \leftrightarrow y \in \prescript{*}{}{U} \right) .
\end{align*}
Define a map $\operatorname{sh} : \prescript{*}{}{\Omega} \to \Omega$ that sends $x \in \prescript{*}{}{\Omega}$ to the unique $y \in \Omega$ such that $x \simeq y$, and let $(x_k)_{k = 1}^\infty$ be a sequence in $\Omega$. Then the map $\operatorname{sh}$ is well-defined.

Further, set
\begin{align*}
	& \operatorname{LS} \left( (x_k)_{k = 1}^\infty \right) \\
	=	& \left\{ \omega \in \Omega : \forall U \in \tau \; \forall k \in \mathbb{N} \; \left[ \left( \omega \in U \right) \rightarrow \left( \exists k' \in \mathbb{N} \; \left( \left( k' \geq K \right) \land \left( x_{k'} \in U \right) \right) \right) \right] \right\} .
\end{align*}
Then
$$\left\{ \operatorname{sh} \left( \prescript{*}{}{x}_K \right) : K \in \prescript{*}{}{\mathbb{N}}_\infty \right\} \subseteq \operatorname{LS} \left((x_k)_{k = 1}^\infty \right) .$$
In addition, if $*$ is $\kappa$-saturated for some uncountable cardinal $\kappa > |\mathcal{B}|$, where $\mathcal{B}$ is some topological basis $\mathcal{B}$ of $\tau$, then $\left\{ \operatorname{sh} \left( \prescript{*}{}{x}_K \right) : K \in \prescript{*}{}{\mathbb{N}}_\infty \right\} = \operatorname{LS} \left((x_k)_{k = 1}^\infty \right) .$
\end{Prop}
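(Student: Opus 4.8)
The plan is to establish well-definedness of $\operatorname{sh}$ first, then prove the two inclusions separately, with only the reverse inclusion requiring the stronger saturation hypothesis. For well-definedness, the point is that $x \simeq y$ for a standard $y \in \Omega$ exactly when $x$ lies in the monad of $y$, i.e. $x \in \prescript{*}{}{U}$ for every open $U \ni y$. Uniqueness is immediate from the Hausdorff property: if $x \simeq y_1$ and $x \simeq y_2$ with $y_1 \neq y_2$, choose disjoint open $U_1 \ni y_1$, $U_2 \ni y_2$; then $x \in \prescript{*}{}{U_1} \cap \prescript{*}{}{U_2} = \prescript{*}{}{(U_1 \cap U_2)} = \prescript{*}{}{\varnothing} = \varnothing$, a contradiction. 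For existence I would argue by contradiction using compactness: if no standard $y$ satisfies $x \simeq y$, then for each $y \in \Omega$ there is an open $U_y \ni y$ with $x \notin \prescript{*}{}{U_y}$; the $U_y$ cover $\Omega$, so by compactness finitely many $U_{y_1}, \dots, U_{y_n}$ cover $\Omega$, and Transfer applied to $\Omega = \bigcup_i U_{y_i}$ gives $\prescript{*}{}{\Omega} = \bigcup_i \prescript{*}{}{U_{y_i}}$, contradicting $x \notin \prescript{*}{}{U_{y_i}}$ for all $i$. This step needs only Transfer and standard compactness, not saturation.

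For the first inclusion, fix $K \in \prescript{*}{}{\mathbb{N}}_\infty$ and put $\omega = \operatorname{sh}(\prescript{*}{}{x}_K)$. To see $\omega \in \operatorname{LS}\left( (x_k)_{k=1}^\infty \right)$, fix open $U \ni \omega$ and $k \in \mathbb{N}$. Since $\omega$ is the standard part, $\prescript{*}{}{x}_K \in \prescript{*}{}{U}$, and $K \geq k$ because $K$ is unlimited. Hence the internal sentence $\exists k' \in \prescript{*}{}{\mathbb{N}}\, \bigl( (k' \geq k) \land (\prescript{*}{}{x}_{k'} \in \prescript{*}{}{U}) \bigr)$ holds, witnessed by $K$. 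This is precisely the $*$-transform of the standard sentence $\exists k' \in \mathbb{N}\, \bigl( (k' \geq k) \land (x_{k'} \in U) \bigr)$, so by Transfer the latter holds, producing the required $k' \geq k$ with $x_{k'} \in U$. As $U$ and $k$ were arbitrary, $\omega \in \operatorname{LS}\left( (x_k)_{k=1}^\infty \right)$.

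For the reverse inclusion, assume $*$ is $\kappa$-saturated with $\kappa > |\mathcal{B}|$, and let $\omega \in \operatorname{LS}\left( (x_k)_{k=1}^\infty \right)$. Writing $\mathcal{B}_\omega = \{ B \in \mathcal{B} : \omega \in B \}$, for each $B \in \mathcal{B}_\omega$ and $k \in \mathbb{N}$ set $S_{B,k} = \{ k' \in \mathbb{N} : (k' \geq k) \land (x_{k'} \in B) \}$, which is nonempty because $\omega \in \operatorname{LS}$. The plan is to produce a single $K \in \bigcap_{B,k} \prescript{*}{}{S}_{B,k}$ by saturation: such a $K$ satisfies $K \geq k$ for every standard $k$, hence is unlimited, and $\prescript{*}{}{x}_K \in \prescript{*}{}{B}$ for every $B \in \mathcal{B}_\omega$; since every open neighborhood of $\omega$ contains some $B \in \mathcal{B}_\omega$, this forces $\operatorname{sh}(\prescript{*}{}{x}_K) = \omega$. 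To invoke saturation I must check the finite intersection property of $\{ \prescript{*}{}{S}_{B,k} \}$: given $(B_1, k_1), \dots, (B_n, k_n)$, the set $\bigcap_i B_i$ is an open neighborhood of $\omega$, hence contains some basic $B' \in \mathcal{B}_\omega$; with $k = \max_i k_i$ one has $S_{B', k} \subseteq \bigcap_i S_{B_i, k_i}$ and $S_{B',k} \neq \varnothing$, so the finite intersection is nonempty. Finally, the index family has cardinality at most $\max(|\mathcal{B}|, \aleph_0) < \kappa$, so $\kappa$-saturation yields the desired $K$. I expect the first two parts to go through by Transfer alone; the crux is this last argument, where the delicate points are the reduction to the basis $\mathcal{B}$ — which is what lets the finite intersection property close up, since a finite intersection of basic sets need not itself be basic — and the cardinality bookkeeping, which is exactly where the hypothesis $\kappa > |\mathcal{B}|$ is consumed.
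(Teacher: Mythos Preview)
Your argument is correct and follows essentially the same route as the paper's proof: well-definedness of $\operatorname{sh}$ via compactness and the Hausdorff property, the forward inclusion by Transfer with $K$ as witness, and the reverse inclusion by saturation applied to the family $\{S_{B,k}\}$ indexed over a basis. The differences are cosmetic: you supply a self-contained compactness argument for existence of the shadow (the paper cites a reference), and you verify the finite intersection property explicitly by shrinking to a basic $B' \subseteq \bigcap_i B_i$ (the paper simply asserts FIP).
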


\begin{proof}
The fact that in a compact topological space, for every $x \in \prescript{*}{}{\Omega}$ exists some $y \in \Omega$ such that $x \simeq y$ can be found in \cite[Theorem 1.6 of Chapter 3]{AppliedNSA}. As for the uniqueness, assume for contradiction that there existed $y, z \in \Omega$ such that $x \simeq y \simeq z, y \neq z$. Using the Hausdorff property, choose open neighborhoods $U, V$ such that $y \in U, z \in V, U \cap V = \emptyset$. Then $y \in U$, so by the Transfer Principle $y \in \prescript{*}{}{U}$. Thus $z \in \prescript{*}{}{U}$, because $y \simeq z$. Therefore $z \in U$ by the Transfer Principle, a contradiction. Thus $\operatorname{sh} : \prescript{*}{}{\Omega} \to \Omega$ is well-defined.

Let $K \in \prescript{*}{}{\mathbb{N}}_\infty$, and consider $y = \operatorname{sh} \left( \prescript{*}{}{x}_K \right)$. Let $\mathcal{N}_y = \left\{ U \in \mathcal{B} : y \in U \right\}$, where $\mathcal{B}$ is a topological basis for $\tau$, and consider for $k \in \mathbb{N}, U \in \mathcal{N}_y$ the sentence $\sigma_{U, k}$ defined by
$$\exists k' \in \mathbb{N} \; \left[ \left( k' \geq k \right) \land \left( x_k \in U \right) \right] .$$
Then $\prescript{*}{}{\sigma}_{k, U}$ is true for all $k \in \mathbb{N}, U \in \mathcal{N}_y$, since $\prescript{*}{}{x}_K \in \prescript{*}{}{U}$ and $K \geq k$ for all $k \in \mathbb{N}$, so it follows that $\sigma_{k, U}$ is true for all $k \in \mathbb{N}, U \in \mathcal{N}_y$. Since $\mathcal{N}_y$ forms a neighborhood basis for $y$, it follows that $y \in \operatorname{LS} \left( (x_k)_{k = 1}^\infty \right)$.

Now suppose that $*$ is $\kappa$-saturated for some uncountable cardinal $\kappa > |\mathcal{B}|$, and let $\omega \in \operatorname{LS} \left( (x_k)_{k = 1}^\infty \right)$. Let $\mathcal{N}_\omega = \left\{ U \in \mathcal{B} : \omega \in U \right\}$. For $k \in \mathbb{N}, U \in \mathcal{N}_\omega$, consider the set
$$S_{k, U} = \left\{ k' \in \mathbb{N} : \left( k' \geq k \right) \land \left( x_{k'} \in U \right) \right\} .$$
Then $\left\{ S_{k, U} : k \in \mathbb{N}, U \in \mathcal{N}_\omega \right\}$ has the finite intersection property, and thus there exists $K \in \bigcap_{k \in \mathbb{N}, U \in \mathcal{N}_\omega} \prescript{*}{}{S}_{k, U}$. Thus $\prescript{*}{}{x}_K \in \prescript{*}{}{U}$ for all $U \in \mathcal{N}_\omega$, and $K \in \prescript{*}{}{\mathbb{N}}_\infty$. Thus $\omega = \operatorname{sh} \left( \prescript{*}{}{x}_K \right)$.
\end{proof}

\begin{Rmk}
Our definitions of $\simeq$ and $\operatorname{sh}$ in the statement of Proposition \ref{Unlimited extensions and compactness arguments} is consistent with our definition of $\simeq$ on $\mathbb{L}$ in the following sense. We can write $\mathbb{L} = \bigcup_{n \in \mathbb{N}} \prescript{*}{}{\left\{ z \in \mathbb{C} : |z| \leq n \right\}}$. If $x, y \in \mathbb{L}$, then there exists $n \in \mathbb{N}$ such that $\max \{|x|, |y|\} \leq n$. Then $x, y \in \prescript{*}{}{\left\{ z \in \mathbb{C} : |z| \leq n \right\} }$. The set $\left\{ z \in \mathbb{C} : |z| \leq n \right\}$ is compact, and the definition of $\simeq$ on that compact space in the sense of Proposition \ref{Unlimited extensions and compactness arguments} will agree with our definition of $\simeq$ on $\mathbb{L}$ from the start of this section.
\end{Rmk}

In light of Theorem \ref{Unlimited extensions and compactness arguments}, several compactness arguments in this article can be proven alternatively in the language of nonstandard analysis. Here we provide a few examples.

\begin{proof}[Proof of Theorem \ref{Gauge exists for C*-dynamical systems} using nonstandard analysis]
For each $k \in \mathbb{N}$, choose a state $\phi_k$ on $\mathfrak{A}$ such that
$$\phi_k \left( \operatorname{Avg}_{F_k} a \right) = \left\| \operatorname{Avg}_{F_k} a \right\| .$$
Fix $K \in \prescript{*}{}{\mathbb{N}}_\infty$, and let $\omega : \mathfrak{A} \to \mathbb{C}$ be the state
$$\omega(x) = \operatorname{sh} \left( \prescript{*}{}{\phi}_K \left( \operatorname{Avg}_{F_K} x \right) \right) .$$
Lemma \ref{Nonstandard K-B} tells us that $\omega$ is $\Theta$-invariant. We argue now that $\omega(a) = m \left( a \vert \mathcal{S}^G \right)$. This follows because if $\psi \in \mathcal{S}^G$, then
$$
\psi(a) = \psi \left( \operatorname{Avg}_{F_k} a \right) \leq \left\| \operatorname{Avg}_{F_k} a \right\| = \phi_k \left( \operatorname{Avg}_{F_k} a \right)
$$
for all $k \in \mathbb{N}$, and thus we can apply the Transfer Principle to the sentence \linebreak$\forall k \in \mathbb{N} \; \left( \psi(a) \leq \phi_k \left( \operatorname{Avg}_{F_k} a \right) \right)$ to infer
$$\psi(a) \leq \prescript{*}{}{\phi}_K \left( \operatorname{Avg}_{F_K} a \right) \Rightarrow \psi(a) \leq \omega(a) .$$

Therefore, we've proven that $\prescript{*}{}{\left\| \operatorname{Avg}_{F_K} a \right\|} \simeq m \left( a \vert \mathcal{S}^G \right)$ for all $K \in \prescript{*}{}{\mathbb{N}}_\infty$. Therefore by a classical result of nonstandard analysis \cite[Theorem 6.1.1]{Goldblatt}, it follows that $\lim_{k \to \infty} \left\| \operatorname{Avg}_{F_k} a \right\| = m \left( a \vert \mathcal{S}^G \right)$.
\end{proof}

\begin{proof}[Proof of Proposition \ref{NC Jenkinson extrema} using nonstandard analysis]
We'll prove that $\overline{a}_{\mathbf{F}, S}(x) = \overline{d}_{\mathbf{F}, S}(x)$, as the proof that $\underline{a}_{\mathbf{F}, S}(x) = \underline{d}_{\mathbf{F}, S}(x)$ is very similar. We know a priori that $\mathscr{P}^\mathbf{F}(S) = \overline{S}$.

Let $(\phi_k)_{k = 1}^\infty$ be a sequence in $S$ such that for each $k \in \mathbb{N}$, we have
$$\sup \left\{ \phi \left( \operatorname{Avg}_{F_k} x \right) : \phi \in S \right\} - 1 / k \leq \phi_k (\operatorname{Avg}_{F_k} x) \leq \sup \left\{ \phi \left( \operatorname{Avg}_{F_k} x \right) : \phi \in S \right\} .$$
Let $K \in \prescript{*}{}{\mathbb{N}}_\infty$, and let $\omega : \mathfrak{A} \to \mathbb{C}$ be the state $\omega (y) = \operatorname{sh} \left( \prescript{*}{}{\phi}_K \left( \operatorname{Avg}_{F_K} y \right) \right)$. Then $\omega \in \mathscr{P}^\mathbf{F}(S)$, so $\omega(x) \leq \overline{a}_{\mathbf{F}, S}(x)$.

To prove the opposite inequality, let $\psi \in \mathscr{P}^\mathbf{F}(S) = \overline{S}$. Then
\begin{align*}
\psi(x)	& = \psi \left( \operatorname{Avg}_{F_k} x \right) \\
	& \leq \sup \left\{ \phi \left( \operatorname{Avg}_{F_k} x \right) : \phi \in S \right\} \\
	& \leq \phi_k \left( \operatorname{Avg}_{F_k} x \right) + 1/k	& (\forall k \in \mathbb{N}) .
\end{align*}
Thus the sentence
$$\forall k \in \mathbb{N} \; \left( \psi(x) \leq \phi_k\left( \operatorname{Avg}_{F_k} x \right) + 1/k \right)$$
is true. Applying the Transfer Principle then tells us that $\psi(x) \leq \prescript{*}{}{\phi}_K \left( \operatorname{Avg}_{F_K} x \right) + 1/K$, implying that $\psi(x) \leq \omega(x)$. Taking a supremum over $\psi \in \overline{S} = \mathscr{P}^\mathbf{F}(S)$ tells us that
$$\overline{a}_{\mathbf{F}, S}(x) \leq \omega(x) .$$

Therefore $\prescript{*}{}{\phi}_K(x) \simeq \overline{a}_{\mathbf{F}, S}(x)$ for all $K \in \prescript{*}{}{\mathbb{N}}$. Thus, by a classical result of nonstandard analysis \cite[Theorem 6.1.1]{Goldblatt}, it follows that $\lim_{k \to \infty} \left\| \operatorname{Avg}_{F_k} a \right\| = \overline{a}_{\mathbf{F}, S}(x)$.
\end{proof}

\begin{proof}[Proof of Theorem \ref{Relative Herman} using nonstandard analysis]
(i)$\Rightarrow$(ii): Suppose $\left\{ \psi(x) : \psi \in \mathscr{P}^\mathbf{F}(S) \right\} = \{\lambda\}$. For each $k \in \mathbb{N}$, choose $\phi_k \in S$ such that $\left| \phi_k \left( \operatorname{Avg}_{F_k} x - \lambda \right) \right| \geq \frac{1}{2} \left\| \operatorname{Avg}_{F_k} x - \lambda \right\|_S$. Fix $K \in \prescript{*}{}{\mathbb{N}}$, and let $\omega : \mathfrak{A} \to \mathbb{C}$ be the state
$$\omega(y) = \operatorname{sh} \left( \prescript{*}{}{\phi}_K \left( \operatorname{Avg}_{F_K} y \right) \right) .$$
Lemma \ref{Nonstandard K-B} tells us that $\omega \in \mathscr{P}^\mathbf{F}(S)$. Thus $\omega(x) = \lambda$. Therefore $\left| \prescript{*}{}{\phi}_K \left( \operatorname{Avg}_{F_K} x \right) - \lambda \right| \simeq 0$ for all $K \in \prescript{*}{}{\mathbb{N}}_\infty$, meaning a classical result of nonstandard analysis \cite[Theorem 6.1.1]{Goldblatt} tells us that $\lim_{k \to \infty} \left| \phi_k \left( \operatorname{Avg}_{F_k} x \right) - \lambda \right| = 0$. But because $\left\| \operatorname{Avg}_{F_k} x - \lambda \right\|_S \leq 2 \left| \phi_k \left( \operatorname{Avg}_{F_k} x \right) - \lambda \right|$ for all $k \in \mathbb{N}$, we can conclude that $\lim_{k \to \infty} \left\| \operatorname{Avg}_{F_k} x - \lambda \right\|_S = 0$.

(ii)$\Rightarrow$(i): Suppose that $\lim_{k \to \infty} \left\| \operatorname{Avg}_{F_k} x - \lambda \right\|_S = 0$.  Let $(\phi_k)_{k = 1}^\infty$ be a sequence in $S$, and let $\omega : \mathfrak{A} \to \mathbb{C}$ be the state
$$\omega(y) = \operatorname{sh} \left( \prescript{*}{}{\phi}_K \left( \operatorname{Avg}_{F_K} y \right) \right) .$$
Then
$$\left| \omega(x - \lambda) \right| \simeq \left| \prescript{*}{}{\phi}_K \left( \operatorname{Avg}_{F_K} x - \lambda \right) \right| \leq \left\| \prescript{*}{}{\operatorname{Avg}}_{F_K} x - \lambda \right\|_S \simeq 0 .$$
Therefore $\omega(x) = \lambda$. We can then take a supremum to get
$$\sup_{ \left(\phi_k\right)_{k = 1}^\infty \in S^\mathbb{N}, K \in \prescript{*}{}{\mathbb{N}}_\infty } \left| \operatorname{sh} \left( \prescript{*}{}{\phi}_K \left( \operatorname{Avg}_{F_K} x \right) \right) - \lambda \right| = 0 .$$
But in light of Proposition \ref{Unlimited extensions and compactness arguments}, we know that
$$\mathscr{P}^\mathbf{F}(S) = \left\{ y \mapsto \operatorname{sh} \left( \prescript{*}{}{\phi}_K \left( \operatorname{Avg}_{F_K} y \right) \right) : (\phi_k)_{k = 1}^\infty \in S^\mathbb{N}, K \in \prescript{*}{}{\mathbb{N}}_\infty \right\} ,$$
so this shows that $\psi(x) = \lambda$ for all $\psi \in \mathscr{P}^\mathbf{F}(S)$.
\end{proof}

\section*{Acknowledgments}

This paper is written as part of the author's graduate studies. He is grateful to his beneficent advisor, professor Idris Assani, for no shortage of helpful guidance.

An earlier version of this paper referred to ``tempero-spatial differentiations." Professor Mark Williams pointed out that the more correct portmanteau would be ``temporo-spatial." We thank Professor Williams for this observation.

\bibliography{Bibliography}
\end{document}